\newtheorem{theorem}{Theorem}[section]
\newtheorem{lemma}[theorem]{Lemma}
\theoremstyle{definition}
\newtheorem{definition}[theorem]{Definition}
\theoremstyle{remark}
\newtheorem{remark}[theorem]{Remark}
\numberwithin{equation}{section}
\begin{document}

\title{The Stability of Full Dimensional KAM tori for Nonlinear Schr\"odinger equation}
\author{Hongzi Cong}
\address{School of Mathematical Sciences, Dalian University of Technology, Dalian, Liaoning 116024, China}
\email{conghongzi@dlut.edu.cn}
\author{Jianjun Liu}
\address{School of Mathematical Sciences, Sichuan University, Chengdu, Sichuan 610065, China}
\email{liujj@fudan.edu.cn}
\author{Yunfeng Shi}
\address{School of Mathematical Sciences, Fudan University, Shanghai 200433, China}
\email{yunfengshi13@fudan.edu.cn}
\author{Xiaoping Yuan}
\address{School of Mathematical Sciences, Fudan University, Shanghai 200433, China}
\email{xpyuan@fudan.edu.cn}




\subjclass[2000]{Primary 37K55, 37J40; Secondary 35B35, 35Q35}



\keywords{Stability for Hamilton PDEs; Almost periodic solution; full dimensional tori; NLS equation}

\begin{abstract}
In this paper, it is proved that the full dimensional invariant tori obtained by Bourgain [J. Funct. Anal., \textbf{229} (2005), no. 1, 62-94.] is stable in a very long time for 1D nonlinear Schr\"{o}dinger
equation with periodic boundary conditions.
\end{abstract}

\maketitle



\section{Introduction and main results}
Consider a Hamiltonian of $n$-freedom
\begin{equation}\label{yuan1}H=H_0(I)+\epsilon H_{1}(\theta,I),\end{equation}
with the standard symplectic  structure $\mathrm{d}\theta\wedge\mathrm{d}I$ on $\mathbb{T}^n\times\mathbb{R}^{n}$ and the angle-action variable $(\theta,I)$ belongs to some domain
$\mathbb{T}^n\times D\subseteq\mathbb{T}^n\times\mathbb{R}^{n}$. Assume the unperturbed Hamiltonian  $H_0(I)$ is independent of $\theta$ and satisfies Kolmogorov non-degenerate condition
$$\mathrm{det}(\partial^2H_0(I))\neq0,\ I\in D.$$
Also assume $H_0,H_1$ are smooth sufficiently. Then the well-known Kolmogorov-Arnold-Moser (KAM) theorem (\cite{K,A,M}) claims that any invariant tori of the unperturbed $H_0$ with prescribed Diophantine frequency $\omega(I_0)=\frac{\partial{H_0}(I_0)}{\partial I}$ for some $I_0\in D$ persist under a small perturbation $\epsilon H_1(I,\theta)$. This theorem is now called the classical KAM theorem and the persisted tori called full dimensional
KAM tori. The huge challenge is encountered when one tries to extend the classical KAM theorem to the Hamiltonian defined by some PDEs because of very complicated resonant relationships among the infinitely many number of frequencies. In order to evade this impasse, one considers the Hamiltonian of the form
$$H=N+\epsilon P(\theta,I,z,\bar{z}), $$
with the symplectic structure $\mathrm{d} \theta\wedge\mathrm{d} I+\sqrt{-1}\mathrm{d}z\wedge\mathrm{d}\bar{z}$
 on $\mathbb{T}^n\times\mathbb{R}^{n}\times\mathcal{H}\times\mathcal{H}\ni(\theta,I,z,\bar{z})$ and
$$N=\sum_{i=1}^{n}\omega_iI_i+\sum_{j=1}^{\tau}\Omega_{j}z_j\bar{z}_{j},\  1\leq n<\infty,1\leq\tau\leq \infty,$$
where $\mathcal{H}$ is a Hilbert space of dimension $\tau$, $\omega=(\omega_1,\omega_2,\cdots,\omega_n)$ called tangent frequency vector, $(\Omega_j)_{1\leq j\leq\tau}$ the normal frequency vector, and $P=P(\theta,I,z,\bar{z})$ is a perturbation. The unperturbed Hamiltonian $N$ has a special invariant torus
$$\mathcal{T}_0=\{\theta=\omega t\}\times\{I=0\}\times\{z=0\}\times\{\bar{z}=0\}.$$
Under suitable assumptions on $N$ and $P$, it can be proved that for ``most'' frequency $\omega$, the tori $\mathcal{T}_0$ can be persisted for some small perturbation $\epsilon P$ (see \cite{E1988} when $\tau<\infty$, see \cite{K3,K1} and \cite{W} when $\tau=\infty$). In \cite{K1,W}, the frequency vector $\omega\in\mathbb{R}^{n}$ is regarded as parameter. In \cite{E1988}, the frequency $$\omega=\omega_0t,$$ where $\omega_0\in\mathbb{R}^{n}$ is a fixed Diophantine vector and $t\in\mathbb{R}$ is regarded as parameter. Anyway, it is proved that a 1-dimensional parameter is needed in \cite{Bour2}, at least, when considering the lower dimensional KAM tori. Thus the work by \cite{E1988} is optimal in this sense, and the frequencies of those lower dimensional tori can not prescribed prior.
 Besides, the KAM theorem of this type depends heavily on the fact that the spatial dimension of the PDEs equals to 1. Bourgain \cite{B3,Bour6} developed a new method initialed by Craig-Wayne \cite{C-W} to deal with the KAM tori for the PDEs in high spatial dimension, based on the Newton iteration, Fr\"{o}hlich-Spencer techniques, Harmonic analysis and semi-algebraic set theory (see \cite{Bour6}). This method is now called C-W-B method. We also mention \cite{E-K} where the KAM theorem is extended in the direction of \cite{K1,K3,E1988,W} to deal with higher spatial dimensional nonlinear Schr\"{o}dinger equation. In addition, the KAM theory is also developed to deal  some 1-dimensional PDEs of unbounded perturbation. See, for example, \cite{K1}, \cite{KP}, \cite{LY1}, \cite{Zhang}, \cite{Baldi}, \cite{Baldi2}, \cite{Feola}, for the details. In the all above works, the obtained KAM tori are lower (finite) dimension, considering that the Hamiltonian PDEs are infinite dimensional.

Naturally, the following problem is interesting:
 \vskip8pt
 {\it Can the full dimensional KAM tori be expected with a suitable decay, for example, $I_n\sim|n|^{-S}$ with some $ S>0$ as $|n|\rightarrow+\infty$ ? }
\vskip8pt
  The existence of the  full dimensional KAM tori  with decay rate $I_n\sim|n|^{-S}$ is still open up to now. See \cite{Kuk3} for the details.  One way to obtain the existence of full dimensional KAM tori is to use repeatedly (infinitely many times) the KAM theorem dealing with lower dimensional KAM tori. See \cite{P2002} and \cite{B96} and some other references. However, the amplitude (or action) of those full dimensional KAM tori decays extremely fast. In fact, the decay rate is defined implicitly and much more fast than $I_n\sim e^{-|n|^{S}}, S>1$.  See more comments in \cite{Bour5}.
 Another way is due to Bourgain in \cite{Bour2005JFA}  where 1-dimensional nonlinear Schr\"{o}dinger equation with periodic boundary condition
is investigated (Also see \cite{P1990} given by P\"{o}schel where infinite dimensional Hamiltonian systems with short range is considered). It is shown in \cite{Bour2005JFA} that 1D NLS has a full dimensional KAM torus of prescribed frequencies  with the actions of the tori obeying the estimates \begin{equation}\label{083101}\frac{1}{2}e^{-r{|n|^{1/2}}}\leq I_n\leq 2e^{-r{|n|^{1/2}}}, n\in\mathbb{Z},\ r>0.
\end{equation}
This is up to now only one existence result about the full dimensional KAM tori with a slower decay rate than $I_n\sim e^{-|n|^{S}}, S>1$.

 On the other hand, it is well-known that  a physical quantity is observable only if it is stable at least for a long time. Naturally, one has the following question:

 \vskip8pt

{\it  Are the full dimensional KAM tori  obtained by Bourgain in \cite{Bour2005JFA}  stable for a long time?}

\vskip8pt

There have been a relatively long history about the long time stability for  the finite dimensional Hamiltonian (\ref{yuan1}) of freedom $n<\infty$. If the unperturbed $H_0$ is convex ( the steepest, in Nekhoroshev's terminology), any solutions including the KAM tori are stable in long time $|t|<\exp(\epsilon^{-\frac{1}{2n}})$, by using Nekhoroshev'e estimate \cite{Nekhoroshev},\cite{Poschel-Nekh}. In fact, those KAM tori are stable in a much longer time $\exp(\exp(\epsilon^{-\frac{1}{a}}))$ with some $a>n$ (see \cite{Morbid-G} for the details). Clearly, the stability of this kind can not be generalized to the Hamiltonian PDEs including NLS, in view of $n=\infty$ at this case. Bambusi \cite{BG93} and Bourgain \cite{B6} initiated the study of the stability in long time $|t|<\epsilon^{-M}$ with a large $M>0$ for the equilibrium $u=0$ for some Hamiltonian PDEs including NLS. See \cite{Bam1,Bam3,BG93,BG,BDGS,BN,BFG,BerB,B6,Delort,DS,FG,GIP,YZ}, for example, for more results. Recently, \cite{CLY} and \cite{CGL}  investigated the stability in long time for the lower (finite) dimensional KAM tori for PDEs.

According to our best knowledge, there has not yet been any result with respect to the stability in long time for the full dimensional KAM tori for the Hamiltonian PDEs. The main aim of the present paper is to prove that the  full dimensional KAM tori obtained by Bourgain  are stable in a long time. Incidentally, we will also prove that those tori are linearly stable.

 In order to state our theorem, let us begin with  the nonlinear Schr\"{o}dinger
equation with periodic boundary conditions
\begin{equation}\label{L1}
\mathbf{i}u_t-u_{xx}+Mu+\epsilon|u|^4u=0,\hspace{12pt}x\in\mathbb{T},
\end{equation}
where $M$ is a random Fourier multiplier defined by
\begin{equation}\label{L2}
\widehat{Mu}(n)=V_n\widehat{u}(n)
\end{equation}
and $(V_n)_{n\in\mathbb{Z}}$ are independently chosen in $[-1,1]$. Written in Fourier modes $(q_n)_{n\in\mathbb{Z}}$, then
(\ref{L1}) can be rewritten as
\begin{equation}\label{L3}
\dot{q}_n=\mathbf{i}\frac{\partial{H}}{\partial\bar{q}_n}
\end{equation}
with the Hamiltonian
\begin{equation}\label{L4}
H(q,\bar{q})=\sum_{n\in\mathbb{Z}}(n^2+V_n)|q_n|^2+\epsilon\sum_{n_1-n_2+n_3-n_4+n_5-n_6=0}q_{n_1}\bar{q}_{n_2}q_{n_3}\bar{q}_{n_4}q_{n_5}\bar{q}_{n_6}.
\end{equation}
Fix $0<\theta<1$ and introduce for any $r>0$ the Banach space $\mathfrak{H}_{r,\infty}$ of all complex-valued sequences
$q=(q_n)_{n\in\mathbb{Z}}$ with
\begin{equation}\label{042501}
\|q\|_{r,\infty}=\sup_{n\in\mathbb{Z}}|q_n|e^{r|n|^{\theta}}<\infty.
\end{equation}
For $x\in\mathbb{R}$, denote $\|x\|=\inf\limits_{y\in\mathbb{Z}}|x-y|$. Then we say a vector $V\in[-1,1]^{\mathbb{Z}}$ is Diophantine, if there exists a real number $\gamma>0$ such that
\begin{equation}\label{S1}
\left|\left|\sum_{n\in\mathbb{Z}}l_nV_n\right|\right|\geq \gamma\prod_{n\in\mathbb{Z}}\frac{1}{1+l_n^2|n|^4},
\end{equation}
\mbox{for any $l\in\mathbb{Z}^{\mathbb{Z}}$ with $0<\#\mathrm{supp}\ l<\infty$}, where
\begin{equation*}
\mathrm{supp}\ l=\left\{n:l_n\neq0\right\}
\end{equation*}
and
\begin{equation*}
|n|=\max\{1,n,-n\}.
\end{equation*}From Lemma 4.1 in \cite{Bour2005JFA}, we know the following resonance issue:
\begin{equation}\label{S3}
P\left\{V: \left|\left|\sum_{n\in\mathbb{Z}}l_nV_n\right|\right|< \gamma\prod_{n\in\mathbb{Z}}\frac{1}{1+l_n^2|n|^4},\forall\  l\neq0 \ \mathrm{with}\ \#\mathrm{supp}\ l<\infty\right\}<C\gamma,
\end{equation}
where $P$ is the standard probability measure on $[-1,1]^{\mathbb{Z}}$ and $C>0$ is an absolute constant.
\begin{theorem}\label{L10}
Given $0<\theta<1$, $r>0$ and a Diophantine vector $\omega=(\omega_n)_{n\in\mathbb{Z}}$ satisfying $\sup_{n}|\omega_n|<1$, then for sufficiently small $\epsilon>0$ and appropriate $M$, (\ref{L1}) has a full dimensional invariant torus $\mathfrak{T}$ satisfying:
\begin{itemize}
 \item[(1)]the amplitude on $\mathfrak{T}$ is restricted as
 \begin{equation}\label{L11}
 \frac{1}{2}e^{-r|n|^{\theta}}\leq|q_n|\leq e^{-r|n|^{\theta}};
 \end{equation}
 \item[(2)]the frequency on $\mathfrak{T}$ is prescribed to be $(n^2+\omega_n)_{n\in\mathbb{Z}}$;
     \item[(3)] the invariant tori $\mathfrak{T}$ are linearly stable;
     \item[(4)] the invariant tori are stable in a long time in the sense that for any small enough $\tau$ (independent of $\epsilon$), if
         \begin{equation*}
         d(q(0),\mathfrak{T})\leq \tau,
         \end{equation*}
         then
         \begin{equation}\label{081101}
         d(q(t),\mathfrak{T})\leq 2\tau,\qquad \mbox{for all $|t|\leq \tau ^{-\frac14|\ln \tau|^{\frac{\theta}{10}}+1}$},
         \end{equation}
         where
         \begin{equation*}
         d(q(0),\mathfrak{T})=\inf_{w\in\mathfrak{T}}||q(0)-w||_{r,\infty}.
         \end{equation*}
\end{itemize}
\end{theorem}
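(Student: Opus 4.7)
My plan is to follow the overall architecture of Bourgain's 2005 construction but generalize it to the weight exponent $\theta \in (0,1)$ and, crucially, extend it with a long-time normal form analysis around the torus. First I introduce tangential action--angle variables $q_n = \sqrt{I_n^0 + J_n}\, e^{\mathbf{i}\theta_n}$, centered on the target amplitudes $I_n^0 \sim e^{-2r|n|^\theta}$ dictated by \eqref{L11}. The Hamiltonian \eqref{L4} then has a linear integrable part with frequencies $\Omega_n = n^2 + V_n + \epsilon\cdot(\text{correction depending on } I^0)$, which we will force to equal $n^2 + \omega_n$ by tuning $V_n$. The perturbation, expanded in the standard Fourier-type monomials $\prod_n J_n^{a_n} e^{\mathbf{i}(k_n-k'_n)\theta_n}$, has coefficients decaying like the Bourgain weight $\exp\bigl(-r\sum_n(2a_n + k_n + k'_n)|n|^\theta\bigr)$, which is preserved by the sextic nonlinearity $|u|^4 u$ exactly because momentum conservation $\sum(n_j - n_{j+1}) = 0$ combined with the concavity of $|n|^\theta$ gives the necessary super-additivity of the weight.

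Next I would run a KAM iteration essentially as in \cite{Bour2005JFA}, but with the weight $|n|^{1/2}$ replaced by $|n|^\theta$ throughout. At each step one splits the new perturbation $P_s$ into a resonant part (moved into the normal form, used to correct $\Omega_n$) and a non-resonant part, killed by solving the cohomological equation $\{F_s, N_s\} = P_s^{\mathrm{nr}}$. The small divisors $\sum_n l_n \Omega_n + \sum_n (k_n - k'_n)\omega_n$ are controlled using the Diophantine condition \eqref{S1} and the prescribed tangential $\omega$; the bound $\prod_n (1 + l_n^2|n|^4)^{-1}$ is compensated by the exponential weight because $|n|^\theta$ still dominates polynomial factors summed over the finitely many indices appearing in any monomial. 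Convergence in the Banach algebra of weight-$|n|^\theta$ formal series yields the full-dimensional torus $\mathfrak{T}$ with amplitudes trapped in $[\tfrac12 e^{-r|n|^\theta}, e^{-r|n|^\theta}]$, proving items (1) and (2).

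For linear stability (item (3)), I note that after the KAM procedure the Hamiltonian in a neighbourhood of $\mathfrak{T}$ has the form
\begin{equation*}
H_\infty = \sum_n \omega_n\, J_n + R_\infty(\theta, J),
\end{equation*}
where $R_\infty$ vanishes to order $\geq 2$ in $J$. Thus the linearized flow on $T_{\mathfrak{T}}\mathbb{C}^{\mathbb{Z}}$ is conjugate to the quasi-periodic flow $\dot\theta = \omega$, $\dot J = 0$, which is bounded on $\mathfrak{H}_{r,\infty}$ since the conjugation is close-to-identity in the weighted topology. The linear stability follows.

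The main obstacle, and the heart of the paper, is the long-time bound \eqref{081101}. After the KAM reduction I would perform a further Birkhoff-type normal form, iterating $N_* \sim |\ln\tau|^{\theta/10}$ steps, to eliminate all non-resonant monomials up to a prescribed order in the displacement $J$. Each step is possible because of the same weighted small-divisor estimate used in the KAM stage, and the exponential decay of $J_n$ (inherited from the size of the initial displacement in $\mathfrak{H}_{r,\infty}$) beats the combinatorial growth of monomials; the delicate bookkeeping is to track how the weight radius $r$ shrinks per step so that after $N_*$ steps one still has a usable exponential weight. The resulting remainder has size bounded by $\tau^{N_*} \cdot (\text{combinatorial factor})$, and a Gronwall argument on the weighted distance $d(q(t), \mathfrak{T})$ gives $d(q(t),\mathfrak{T}) \leq 2\tau$ for $|t| \leq \tau^{-N_*/4}$, matching \eqref{081101} once $N_*$ is optimized as $\tfrac14 |\ln\tau|^{\theta/10}$. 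The hardest technical point will be verifying that the weighted-space norms behave well under iterated Poisson brackets in an \emph{infinite}-dimensional setting --- this is where the concavity of $|n|^\theta$, momentum conservation, and the exponential spatial weight must conspire to prevent the combinatorial explosion that would otherwise destroy the Birkhoff scheme.
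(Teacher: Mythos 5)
Your proposal has the right overall shape (a Bourgain-style KAM iteration with the $\theta$-weight, followed by a Birkhoff normal form of order $\sim|\ln\tau|^{\theta/10}$), but it misses the two technical points that actually make the argument close, and both are genuine gaps rather than cosmetic differences.

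First, you open by passing to tangential action--angle variables $q_n=\sqrt{I_n^0+J_n}\,e^{\mathbf{i}\theta_n}$; the paper explicitly performs the whole analysis in the complex conjugate variables $(q_n,\bar q_n)$ \emph{without} introducing action--angle variables, treating $I_n$ and $J_n=I_n-I_n(0)$ only as notational shorthand inside the monomials $\mathcal{M}_{akk'}=\prod_n I_n(0)^{a_n}q_n^{k_n}\bar q_n^{k_n'}$. This is not a stylistic choice: on a full-dimensional torus with $|q_n|\sim e^{-r|n|^\theta}\to 0$, the action--angle map is singular in the limit and the infinitely many angles $\theta_n$ are not controlled by the sup-weighted norm $\|q\|_{r,\infty}$. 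The cohomological equation, the symplectic flow estimates, and the distance $d(q(t),\mathfrak{T})$ that you need for the Gronwall step all live on $\mathfrak{H}_{r,\infty}$ in $q$-coordinates, and you have not indicated how to transfer estimates through a singular change of coordinates. Second, and more fundamentally, you claim that ``convergence in the Banach algebra of weight-$|n|^\theta$ formal series yields the full-dimensional torus.'' This is precisely the gap in Bourgain's own 2005 paper, where the KAM iteration produces only a \emph{formal} normal form of order $2$ in the coefficient norm $\|\cdot\|_\rho$, which carries no direct information about the Hamiltonian vector field or the dynamics; Bourgain had to invoke a separate argument (his Section 8) to extract an invariant torus. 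The present paper's key technical addition is the estimate
\[
\sup_{\|q\|_{r,\infty}<1}\|X_H\|_{r,\infty}\leq C(\rho,\theta)\,\|H\|_\rho \qquad (r>7\rho),
\]
(Lemma~\ref{063004}), which converts the coefficient-norm control into a genuine bound on the vector field in $\mathfrak{H}_{r,\infty}$. This lemma is what turns the formal normal form into a true one, yields the existence and linear stability of $\mathfrak{T}$ directly, and is what makes the Gronwall estimate in the long-time part legitimate (the bound~\eqref{083104} is a vector field bound, not a coefficient bound). Your proposal nowhere supplies this bridge, so items (3) and (4) --- and even the deduction of (1) and (2) from the iteration --- do not follow from the estimates you describe. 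Everything else in the outline (the super-additivity of $|n|^\theta$ under momentum conservation, the Diophantine small-divisor bound, the choice $M\sim|\ln\tau|^{\theta/10}$ matching the exponent in~\eqref{081101}) is consistent with the paper, but without the vector field lemma the argument stops at the formal level.
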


\begin{remark}

In order to fulfill the decay rate, Bourgain introduced a weight function $\sum_n(2a_n+k_n+k_n'){|n|^{1/2}}$ for a polynomial Hamiltonian
\[\sum_{a,k,k^\prime} B_{akk^\prime} \prod_{n} I_n^{a_n} q_n^{k_n}{\bar q}_n^{k^\prime_n}. \]
In Remark 2 (p. 67, \cite{Bour2005JFA}), Bourgain stated that  the weight function $\sum_n(2a_n+k_n+k_n'){|n|^{1/2}}$ may have been replaced by expression $\sum_n(2a_n+k_n+k_n)|n|^{\theta}$ for some $0<\theta<1$.
In the present paper, we fulfill the Bourgain's statement incidentally.
\end{remark}


\section{The Norm of the Hamiltonian}
Most of the notations come from \cite{Bour2005JFA} for the reader's easy understanding. The analysis will be performed in complex conjugate variables $(q_n,\bar q_n)$ without passing to action-angle variables. The Hamiltonian expressions may involve $I_n=|q_n|^2$ and $J_n=I_n-I_n(0)$ as notations but not as new variables, where $I_n(0)$ will be considered as the initial data. At every stage of the iteration, the Hamiltonian
$H$ will be expanded in monomials $\mathcal{M}_{akk'}$ ($a,k,k'\in\mathbb{N}^{\mathbb{Z}}$ are multi-indices) of the following form:
\begin{equation}
\mathcal{M}_{akk'}=\prod_{n\in\mathbb{Z}}I_n(0)^{a_n}q_n^{k_n}\bar q_n^{k_n'},
\end{equation}
where
\begin{equation}
\sum_{n\in\mathbb{Z}}k_n=\sum_{n\in\mathbb{Z}}k_n',
\end{equation}
and
\begin{equation}\label{050901}
\sum_{n\in\mathbb{Z}} nk_n=\sum_{n\in\mathbb{Z}}nk_n'.
\end{equation}
Define by
\begin{equation}
\mbox{supp}\ \mathcal{M}_{akk'}=\{n:2a_n+k_n+k_n'\neq 0\},
\end{equation}
and
\begin{equation}
\mbox{degree}\ \mathcal{M}_{akk'}=\sum_{n\in\mathbb{Z}}(2a_n+k_n+k_n').
\end{equation}
With these notations, the  Hamiltonian (\ref{L4}) has the form of
\begin{equation}\label{H}
H(q,\bar q)=H_{2}(q,\bar q)+\sum_{a,k,k'}B_{akk'}\mathcal{M}_{akk'}
\end{equation}
where
\begin{equation*}
H_{2}(q,\bar q)=\sum_{n\in\mathbb{Z}}(n^2+V_n)|q_n|^2,
\end{equation*}
$B_{akk'}$ are the coefficients, $V_n\in[-1,1]\ \mbox{for}\ \forall\  n\in\mathbb{Z}$ and assuming $$\sum_{n\in\mathbb{Z}}(2a_n+k_n+k'_n)=6.$$

Denote by
\begin{equation*}
n_1^*=\max\{|n|:a_n+k_n+k_n'\neq 0\}.
\end{equation*}
Before defining the norm of the Hamiltonian, we give the following lemmas:
\begin{lemma}\label{005}
Denote $(n^*_i)_{i\geq1}$ the decreasing rearrangement of
\begin{equation*}
\{|n|:\ \mbox{where $n$ is repeated}\ 2a_n+k_n+k_n'\ \mbox{times}\},
\end{equation*}
and assume
\begin{equation}\label{0m}\sum_{n\in\mathbb{Z}}(k_n-k'_n)n=0.\end{equation}
Then for any $0<\theta<1$, one has
\begin{equation}\label{001}
\sum_{n\in\mathbb{Z}}(2a_n+k_n+k_n')|n|^{\theta}\geq2(n_1^*)^{\theta}+(2-2^{\theta})\sum_{i\geq 3}(n_i^*)^{\theta}.
\end{equation}
\end{lemma}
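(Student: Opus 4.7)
The plan is to reformulate (\ref{001}) in the equivalent form
\[
(n_1^*)^\theta \leq (n_2^*)^\theta + (2^\theta - 1)\sum_{i \geq 3}(n_i^*)^\theta,
\]
using that $\sum_n (2a_n+k_n+k'_n)|n|^\theta = \sum_{i\geq 1}(n_i^*)^\theta$. The case $n_1^* = n_2^*$ is immediate since $2^\theta > 1$, so I focus on $n_1^* > n_2^*$. Writing $N = n_1^*$, this forces $N$ to appear with multiplicity exactly one in the multiset defining the $n_i^*$, for otherwise $n_2^* = N$ as well.

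Unpacking this multiplicity-one condition gives $2a_N + k_N + k'_N + 2a_{-N} + k_{-N} + k'_{-N} = 1$, so exactly one of $k_N, k'_N, k_{-N}, k'_{-N}$ equals $1$ and all other labels at $|n| = N$ vanish. By the symmetries $q_n \leftrightarrow \bar q_n$ and $n \leftrightarrow -n$ preserving (\ref{0m}), I may assume $k_N = 1$. Plugging into (\ref{0m}) gives $N = -\sum_{n\neq N}n(k_n - k'_n)$, whence
\[
N \leq \sum_{n\neq N}|n|(k_n + k'_n) \leq \sum_{n\neq N}(2a_n + k_n + k'_n)|n| = n_2^* + \sum_{i\geq 3}n_i^*.
\]

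The analytic heart of the proof is the inequality
\[
\Bigl(a + \sum_{j=1}^m b_j\Bigr)^\theta \leq a^\theta + (2^\theta - 1)\sum_{j=1}^m b_j^\theta \qquad \text{whenever } a \geq b_1, \ldots, b_m \geq 0.
\]
I would induct on $m$: decompose $a + \sum_{j\leq m}b_j = (a + \sum_{j<m}b_j) + b_m$ and note that $a + \sum_{j<m}b_j \geq a \geq b_m$, so the case $m=1$ applies to the final summand. The base case $(a+b)^\theta \leq a^\theta + (2^\theta - 1)b^\theta$ for $0 \leq b \leq a$, upon dividing by $b^\theta$ and setting $s = a/b \geq 1$, reduces to $(s+1)^\theta - s^\theta \leq 2^\theta - 1$, which is immediate from concavity of $y \mapsto y^\theta$: the first differences $(s+1)^\theta - s^\theta$ are nonincreasing in $s \geq 1$ and equal $2^\theta - 1$ at $s = 1$. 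Applied with $a = n_2^*$ and $b_i = n_i^*$ (for $i \geq 3$, each $\leq n_2^*$ by the ordering), together with the momentum bound above and monotonicity of $y \mapsto y^\theta$, this delivers the reformulated inequality, hence (\ref{001}).

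The main obstacle is pinning down the sharp coefficient $2^\theta - 1$: plain subadditivity $(a+b)^\theta \leq a^\theta + b^\theta$ only supplies coefficient $1$, which is insufficient. The crucial extra input is that every tail mode $n_i^*$ with $i \geq 3$ is dominated by the \emph{second}-largest mode $n_2^*$ rather than merely by $n_1^*$; combined with the decreasing first-differences of the concave function $y^\theta$, this buys the improvement from $1$ to $2^\theta - 1$ needed to close the estimate.
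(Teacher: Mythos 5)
Your proposal is correct, and the analytic core is identical to the paper's: you prove $(a+\sum_j b_j)^\theta \le a^\theta + (2^\theta-1)\sum_j b_j^\theta$ for $a \ge b_j \ge 0$ by induction from the two-variable case via concavity of $y\mapsto y^\theta$, which is exactly the paper's inequality $a^\theta+b^\theta \ge (a+b)^\theta+(2-2^\theta)b^\theta$ iterated down the tail. Where you diverge is the combinatorial step that establishes $n_1^* \le n_2^* + \sum_{i\ge 3}n_i^*$. You split on whether $n_1^*=n_2^*$, and in the strict case argue that $n_1^*$ has multiplicity one, reduce by the symmetries $q\leftrightarrow\bar q$ and $n\leftrightarrow -n$ to the configuration $k_N=1$, then solve (\ref{0m}) for $N$ and apply the triangle inequality. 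The paper reaches the same bound without any case distinction: since each $n$ appears $2a_n+k_n+k_n'$ times, one assigns signs $\mu_i\in\{\pm1\}$ so that the $2a_n$ copies cancel pairwise while the $k_n$ copies get $+$ and the $k_n'$ copies get $-$; then (\ref{0m}) becomes $\sum_i\mu_i n_i=0$, and the triangle inequality gives $n_1^*\le\sum_{i\ge2}|n_i|$ directly. Your route is valid but longer, and carries some bookkeeping (the parity argument ruling out $a$-contributions, the symmetry reduction) that the sign-assignment trick makes unnecessary; that trick is worth internalizing since it scales painlessly to the analogous lemmas later in the paper (e.g.\ Lemma~\ref{a1}) where a case split on multiplicity would be much more tedious.
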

\begin{proof}Without loss of generality,
denote $(n_i),\ |n_1|\geq |n_2|\geq\cdots$, the system $\{n:\ \mbox{where $n$ is repeated}\ 2a_n+k_n+k_n'\ \mbox{times}\}$ and we have $n_i^*=|n_i|\ \mbox{for}\ \forall\ i\geq1$. In view of (\ref{0m}), there exist $(\mu_i)_{i\geq1}$ with $\mu_i\in\{\pm1\}$ such that
\begin{equation*}
\sum_{i\geq1}\mu_in_i=0,
\end{equation*}
and hence
\begin{equation*}
n_1^*\leq \sum_{i\geq 2}|n_i|.
\end{equation*}
Consequently
\begin{equation*}
(n_1^*)^{\theta}\leq\left(\sum_{i\geq 2}|n_i|\right)^{\theta}.
\end{equation*}
Thus the inequality (\ref{001}) will follow from the inequality
\begin{equation}\label{002}
\sum_{i\geq 2}|n_i|^{\theta}\geq \left(\sum_{i\geq 2}|n_i|\right)^{\theta}+(2-2^{\theta})\sum_{i\geq 3}|n_i|^\theta.
\end{equation}
To prove the inequality (\ref{002}), one just needs the following fact: consider the function
\begin{equation*}
f(x)=(1+x)^{\theta}-x^{\theta},\qquad x\in[1,\infty),
\end{equation*}
and one has
\begin{equation}\label{003}
\max_{x\in[1,\infty)}f(x)=f(1)=2^{\theta}-1,
\end{equation}
which is based on
\begin{equation*}
f'(x)=\theta((1+x)^{\theta-1}-x^{\theta-1})<0,\  \mbox{for}\ x\in[1,\infty)\ \mbox{and}\ \forall\  \theta\in(0,1).
\end{equation*}
Hence, for any $a\geq b>0$, we have
\begin{eqnarray}
\nonumber\left(a+b\right)^{\theta}+(2-2^{\theta})b^{\theta}-a^{\theta}-b^{\theta}
&=&\nonumber\left(a+b\right)^{\theta}-a^{\theta}+(1-2^{\theta})b^{\theta}\\
&=&\nonumber b^{\theta}\left(\left(1+\frac{a}{b}\right)^{\theta}-\left(\frac{a}{b}\right)^{\theta}-(2^{\theta}-1)\right)\\
\nonumber&\leq&0 \ \ \mbox{(in view of (\ref{003}))},
\end{eqnarray}
that is
\begin{equation}\label{042505}
a^{\theta}+b^{\theta}\geq\left(a+b\right)^{\theta}+(2-2^{\theta})b^{\theta}\ \ \mbox{(for $\forall\ a\geq b>0$)}.
\end{equation}
By iteration, one obtains
\begin{eqnarray*}
\sum_{i\geq 2}|n_i|^{\theta}
&=&|n_2|^{\theta}+|n_3|^{\theta}+\sum_{i\geq 4}|n_i|^{\theta}\\
&\geq&\left(|n_2|+|n_3|\right)^{\theta}+(2-2^{\theta})|n_3|^{\theta}+\sum_{i\geq 4}|n_i|^{\theta}\qquad (\mbox{in view of (\ref{042505})})\\
&=&\left(|n_2|+|n_3|\right)^{\theta}+|n_4|^{\theta}+\sum_{i\geq 5}|n_i|^{\theta}+(2-2^{\theta})|n_3|^{\theta}\\
&\geq&\left(|n_2|+|n_3|+|n_4|\right)^{\theta}+(2-2^{\theta})|n_4|^{\theta}+\sum_{i\geq 5}|n_i|^{\theta}+(2-2^{\theta})|n_3|^{\theta}\\
&&\qquad (\mbox{in view of (\ref{042505}) again})\\
&=&\left(|n_2|+|n_3|+|n_4|\right)^{\theta}+\sum_{i\geq 5}|n_i|^{\theta}+(2-2^{\theta})(|n_3|^{\theta}+|n_4|^{\theta})\\
&&\cdots\\
&\geq&\left(\sum_{i\geq 2}|n_i|\right)^{\theta}+(2-2^{\theta})\left(\sum_{i\geq 3}|n_i|^{\theta}\right).
\end{eqnarray*}
\end{proof}

Now we will define the norm of the Hamiltonian $\sum_{a,k,k'}B_{akk'}\mathcal{M}_{akk'}$ with the weight $\rho>0$ by
\begin{definition}\label{083103}
\begin{equation}\label{042602}
||H||_{\rho}=\max_{a,k,k'}\frac{|B_{akk'}|}{e^{\rho\sum_{n}((2a_n+k_n+k_n')|n|^{\theta}-2(n_1^*)^{\theta})}}.
\end{equation}
\end{definition}

\section{The Homological Equations}
\subsection{Derivation of homological equations}
The proof of Main Theorem employs the rapidly
converging iteration scheme of Newton type to deal with small divisor problems
introduced by Kolmogorov, involving the infinite sequence of coordinate transformations.
At the $s$-th step of the scheme, a Hamiltonian
$H_{s} = N_{s} + R_{s}$
is considered, as a small perturbation of some normal form $N_{s}$ . A transformation $\Phi_{s}$ is
set up so that
$$H_{s}\circ \Phi_{s} = N_{s+1} + R_{s+1}$$
with another normal form $N_{s+1}$ and a much smaller perturbation $R_{s+1}$. We drop the index $s$ of $H_{s}, N_{s}, R_{s}, \Phi_{s}$ and shorten the index $s+1$ as $+$.

Now consider the Hamiltonian $H$ of the form
\begin{eqnarray}\label{N1}
{H}=N+R,
\end{eqnarray}
where
\begin{equation*}
N=\sum_{n\in\mathbb{Z}}(n^2+\widetilde V_n)|q_n|^2,
\end{equation*}
and
\begin{equation*}
R=R_0+R_1+R_2
\end{equation*}
with $|\widetilde V_n|\leq 2$ for all $n\in\mathbb{Z}$,
\begin{eqnarray*}
{R}_0&=&\sum_{a,k,k'\atop\mbox{supp}\ k\bigcap \mbox{supp}\ k'=\emptyset}B_{akk'}\mathcal{M}_{akk'},\\
{R}_1&=&\sum_{n\in\mathbb{Z}}J_n\left(\sum_{a,k,k'\atop\mbox{supp}\ k\bigcap \mbox{supp}\ k'=\emptyset}B_{akk'}^{(n)}\mathcal{M}_{akk'}\right),\\
{R}_2&=&\sum_{n_1,n_2\in\mathbb{Z}}J_{n_1}J_{n_2}\left(\sum_{a,k,k'\atop\mbox{no assumption}}B_{akk'}^{(n_1,n_2)}\mathcal{M}_{akk'}\right),
\end{eqnarray*}
and
\begin{equation*}
J_n=I_n-I_n(0),\quad I_n=|q_n|^2.
\end{equation*}
We desire to eliminate the terms $R_0,R_1$ in (\ref{N1}) by the coordinate transformation $\Phi$, which is obtained as the time-1 map $X_F^{t}|_{t=1}$ of a Hamiltonian
vector field $X_F$ with $F=F_0+F_1$. Let ${F}_{0}$ (resp.${F}_{1}$) has the form of ${R}_0$ (resp.${R}_{1}$),
that is \begin{eqnarray}
&&{F}_0=\sum_{a,k,k'\atop\mbox{supp}\ k\bigcap \mbox{supp}\ k'=\emptyset}F_{akk'}\mathcal{M}_{akk'},\\
&&{F}_1=\sum_{n\in\mathbb{Z}}J_n\left(\sum_{a,k,k'\atop\mbox{supp}\ k\bigcap \mbox{supp}\ k'=\emptyset}F_{akk'}^{(n)}\mathcal{M}_{akk'}\right),
\end{eqnarray}
and the homological equations become
\begin{equation}\label{4.27}
\{N,{F}\}+R_0+R_{1}=[R_0]+[R_1],
\end{equation}
where
\begin{equation}\label{051501}
[R_0]=\sum_{a}B_{a00}\mathcal{M}_{a00},
\end{equation}
and
\begin{equation}\label{051502}
[R_1]=\sum_{n\in\mathbb{Z}}J_n\sum_{a}B_{a00}^{(n)}\mathcal{M}_{a00}.
\end{equation}
The solutions of the homological equations (\ref{4.27}) are given by
\begin{equation}\label{051304}
F_{akk'}=\frac{B_{akk'}}{\sum_{n\in\mathbb{Z}}(k_n-k^{'}_n)(n^2+\widetilde{V}_n)},
\end{equation}
where
\begin{equation}\label{051305}
F_{akk'}^{(m)}=\frac{B_{akk'}^{(m)}}{\sum_{n\in\mathbb{Z}}(k_n-k^{'}_n)(n^2+\widetilde{V}_n)},
\end{equation}
and the new Hamiltonian ${H}_{+}$ has the form
\begin{eqnarray}
H_{+}\nonumber&=&H\circ\Phi\\
&=&\nonumber N+\{N,F\}+R_0+R_1\\
&&\nonumber+\int_{0}^1\{(1-t)\{N,F\}+R_0+R_1,F\}\circ X_F^{t}\ \mathrm{d}{t}
+\nonumber R_2\circ X_F^1\\
&=&\label{051401}N_++R_+,
\end{eqnarray}
where
\begin{equation}\label{051402}
N_+=N+[R_0]+[R_1],
\end{equation}
and
\begin{equation}\label{051403}
R_+=\int_{0}^1\{(1-t)\{N,F\}+R_0+R_1,F\}\circ X_F^{t}\ \mathrm{d} t+R_2\circ X_F^1.
\end{equation}
\subsection{The solutions of the homological equations}In this subsection, we will estimate
the solutions of the homological equations. To this end, we define a new norm for the Hamiltonian ${R}$ of the form as in (\ref{N1}) as follows:
\begin{equation}
||{R}||_{\rho}^{+}=\max\{||R_0||_\rho^{+},||R_1||_\rho^{+}|,||R_2||_\rho^{+}\},
\end{equation}
where
\begin{eqnarray}
&&\label{051302}||R_0||_\rho^{+}=\sup_{a,k,k'}\frac{|B_{akk'}|}{e^{\rho(\sum_{n}(2a_n+k_n+k_n')|n|^{\theta}-2(n_1^*)^{\theta})}},\\
&&||R_1||_\rho^{+}=\sup_{a,k,k'\atop m\in\mathbb{Z}}\frac{|B^{(m)}_{akk'}|}{e^{\rho(\sum_{n}(2a_n+k_n+k_n')|n|^{\theta}+2|m|^{\theta}-2(n_1^*)^{\theta})}},\\
&&||R_2||_\rho^{+}=\sup_{a,k,k'\atop
m_1,m_2\in\mathbb{Z}}\frac{|B^{(m_1,m_2)}_{akk'}|}{e^{\rho(\sum_{n}(2a_n+k_n+k_n')|n|^{\theta}
+2|m_1|^{\theta}+2|m_2|^{\theta}-2(n_1^*)^{\theta})}}.
\end{eqnarray}
Moreover, one has the following estimates:
\begin{lemma}\label{051301}
Given any $\rho,\delta>0$ and a Hamiltonian $R$, one has
\begin{equation}\label{N6}
||R||_{\rho+\delta}^{+}\leq\left(\frac{1}{\delta}\right)^{ C(\theta)\delta^{-\frac{1}{\theta}}}||R||_{\rho}
\end{equation}
and
\begin{equation}\label{N7}
||R||_{\rho+\delta}\leq\frac{C(\theta)}{\delta^2}||R||_{\rho}^{+},
\end{equation}
where $C(\theta)$ is a  positive constant depending on $\theta$ only.
\end{lemma}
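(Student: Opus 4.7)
The two inequalities are dual, both arising from the change of basis $q_n \bar q_n = I_n(0) + J_n$ combined with a careful count of contributions. My plan is to expand each monomial $\mathcal{M}_{akk'}$ in $R$ via
\begin{equation*}
I_n^{c_n} = (I_n(0) + J_n)^{c_n} = \sum_{j_n=0}^{c_n}\binom{c_n}{j_n} I_n(0)^{c_n-j_n} J_n^{j_n},
\end{equation*}
where $c_n = \min(k_n, k_n')$, and to regroup by $J = \sum_n j_n$: the $J=0$, $J=1$, and $J \ge 2$ subsums become $R_0$, $R_1$, and $R_2$ respectively. For $J \ge 3$ one must reabsorb extra $J$-factors via $J_n = q_n\bar q_n - I_n(0)$ so that only two explicit $J$-factors remain in each $R_2$ term.

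For the first inequality, I fix a target $R_0$ monomial with indices $(a_0, k_0, k_0')$, $\min(k_0, k_0') = 0$. Its coefficient is $B^{(0)}_{a_0 k_0 k_0'} = \sum_{0 \le c \le a_0} B_{(a_0-c)(k_0+c)(k_0'+c)}$, and a direct check shows every summand has identical support, hence identical weight $L_0 = \sum_n (2a_{0,n}+k_{0,n}+k_{0,n}')|n|^\theta - 2(n_1^*)^\theta$, so
\begin{equation*}
|B^{(0)}_{a_0 k_0 k_0'}| \le \prod_n (a_{0,n}+1) \cdot \|R\|_\rho \cdot e^{\rho L_0}.
\end{equation*}
The core task is the separable-optimization bound
\begin{equation*}
\sup_{a_0}\prod_n(a_{0,n}+1)\,e^{-\delta L_0} \le \left(\frac{1}{\delta}\right)^{C(\theta)\delta^{-1/\theta}},
\end{equation*}
which I plan to prove factor by factor: $\sup_{x \ge 0}\bigl(\log(x+1) - 2\delta|n|^\theta x\bigr) \le \log\bigl(1/(2\delta|n|^\theta)\bigr)$, with the maximizer nonnegative only when $|n| \le (2\delta)^{-1/\theta}$. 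Thus only $O(\delta^{-1/\theta})$ modes contribute nontrivially, each by $O(\log(1/\delta))$, yielding the super-polynomial bound. The $R_1, R_2$ contributions proceed the same way, with extra binomial factors $c_{m_i} \le a_{0, m_i} + 1$, and in the $J \ge 3$ case the $2^{J-2}$ reabsorption terms fit inside the same supremum because by Lemma \ref{005} one has $J \le \sum_n c_n \le d/2 \le C(\theta) L_0 + O(1)$, where $d$ is the degree.

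For the second inequality, I substitute $J_n = q_n \bar q_n - I_n(0)$ into each $R_i$-monomial, producing at most $2^i \le 4$ monomials in the $(I(0), q, \bar q)$ basis. The question is how many $R_i$-monomials can contribute to a given target $\mathcal{M}_{akk'}$: at most $1$ from $R_0$, at most $1 + |\{m : a_m \ge 1\}|$ from $R_1$, and $O\bigl((1+\sum_m a_m)^2\bigr)$ from $R_2$. Under the natural convention that $n_1^*$ of an $R_i$-monomial includes its explicit $J$-indices, each contribution has the same weight as the target and is bounded by $\|R\|_\rho^+\, e^{\rho L}$. Lemma \ref{005} then gives $d \le C(\theta) L + 2$, so the total count is $O(1+L^2)$ and $|B_{akk'}| \le C(\theta)(1+L^2)\,\|R\|_\rho^+\, e^{\rho L}$; since $\sup_{L \ge 0}(1+L^2)e^{-\delta L} = O(1/\delta^2)$, the estimate follows. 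The main obstacle I expect is the separable-optimization step in the first inequality combined with the careful $n_1^*$ bookkeeping, both of which ultimately rest on Lemma \ref{005} to convert weight constraints into degree constraints.
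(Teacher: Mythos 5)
Your proposal matches the paper's proof closely: same change of basis $I_n = I_n(0) + J_n$, same observation that every term contributing to a target $R_0$ coefficient shares the same weight and support, same counting factor $\prod_n(1+a_n)$, and the same ingredients (degree-to-weight conversion via Lemma~\ref{005}, separable optimization, and $\sup_L L^2 e^{-\delta L} = O(\delta^{-2})$) for (\ref{N7}). The one place where your sketch has a genuine gap is the separable optimization for (\ref{N6}). You propose to bound $\prod_n(1+a_{0,n})\,e^{-\delta L_0}$ by maximizing $\log(x+1) - 2\delta|n|^\theta x$ mode by mode, but this silently drops the $-2(n_1^*)^\theta$ correction inside $L_0$. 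That correction amounts to an \emph{extra} factor $e^{+2\delta(n_1^*)^\theta}$, which cancels the exponential decay coming from (up to) the two largest entries of the multiset $\{n_i^*\}$. In particular, if the largest mode $n_1$ has $a_{n_1}\ge 1$ and multiplicity exactly $2$ (so $k_{n_1}=k_{n_1}'=0$, $a_{n_1}=1$), its factor in your product is $(1+a_{n_1})e^{-2\delta|n_1|^\theta}\cdot e^{+2\delta|n_1|^\theta}=2$ with \emph{no} decay at all — the mode-by-mode estimate against $2\delta|n|^\theta$ never sees this. The paper's appendix proof resolves exactly this by splitting into three cases according to whether $n_1^*=n_3^*$, $n_1^*>n_2^*=n_3^*$, or $n_2^*>n_3^*$: in the first case the subtraction $-2(n_1^*)^\theta$ still leaves at least $\tfrac13$ of the total weight, so the product is genuinely separable with exponent $\tfrac{2\delta}{3}|n|^\theta$; in the other two the extremal modes $n_1$ (and $n_2$) are forced to have $a_n\le 1$, so $(1+a_n)\le 2$ absorbs the lost decay, and separability is recovered on $|n|\le n_3^*$. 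You flag this as ``careful $n_1^*$ bookkeeping,'' but the case split is the substance of the argument and must be supplied; without it the claimed pointwise bound is false for the boundary modes.

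A secondary bookkeeping point worth noting: for the $J\ge 3$ reabsorption in your $R_2$ part, literally rewriting $J_n^{j-2}=(q_n\bar q_n - I_n(0))^{j-2}$ produces binomial coefficients of size up to $2^{j-2}$, and since Lemma~\ref{005} only gives $j\le \mathrm{degree} \le C(\theta)L_0 + O(1)$, the resulting factor $2^{j-2}$ scales like $e^{cL_0}$ with $c$ independent of $\delta$, which beats $e^{-\delta L_0}$ for small $\delta$. The paper avoids this entirely by a telescoping identity (using $a^{n}-b^{n}=(a-b)\sum_r a^r b^{n-1-r}$ twice) that extracts exactly two $J$'s with only a polynomial factor $O(b_m^2)$ in the multiplicity, leaving the remaining $I_n$-powers unexpanded. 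Your reabsorption can presumably be repaired, but as sketched it does not close.
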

\begin{proof}
See the details of the proof in the Appendix.
\end{proof}

\begin{lemma}\label{S6}
Given $\theta\in(0,1)$, let $(\widetilde{V}_n)$ be Diophantine with $\gamma>0$ (see (\ref{S1})). Then for any $\rho>0,0<\delta\ll1$ (depending only on $\theta$), the solutions of the homological equations (\ref{4.27}), which are given by (\ref{051304}) and (\ref{051305}), satisfy
\begin{eqnarray}\label{S7}
||{F}_i||_{\rho+\delta}^{+}\leq \frac{1}{\gamma}\cdot e^{C(\theta)\delta^{-\frac5\theta}}||{{R_i}}||_{\rho}^{+},
\end{eqnarray}
 where $i=0,1$ and $C(\theta)$ is a positive constant depending on $\theta$ only.
\end{lemma}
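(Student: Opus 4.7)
My plan is to reduce the estimate to controlling the small divisors via the Diophantine condition and then absorbing the resulting product bound into the weighted exponential norm.

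First, I would establish a pointwise lower bound on the denominators in (\ref{051304}) and (\ref{051305}). Writing $l_n = k_n-k_n' \in \mathbb{Z}$, the key observation is that $\sum_n l_n n^2$ is an integer, so
\[
\Bigl|\sum_n l_n(n^2+\widetilde V_n)\Bigr|\geq \Bigl\|\sum_n l_n\widetilde V_n\Bigr\|\geq \gamma\prod_n \frac{1}{1+l_n^2|n|^4}
\]
by the Diophantine condition (\ref{S1}). Combined with the upper bounds on $|B_{akk'}|$ and $|B^{(m)}_{akk'}|$ coming from the definitions of $\|R_i\|_\rho^+$, this reduces the lemma to the product estimate
\[
\prod_n(1+l_n^2|n|^4)\leq \exp\bigl(C(\theta)\delta^{-5/\theta}\bigr)\cdot\exp\Bigl(\delta\Bigl(\sum_n(2a_n+k_n+k_n')|n|^\theta-2(n_1^*)^\theta\Bigr)\Bigr).
\]
For $R_1$ the extra weight $e^{2\rho|m|^\theta}$ appears identically on both sides and cancels, so the same product estimate suffices.

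Second, I would take logarithms and bound each factor by $\log(1+l_n^2|n|^4)\leq 2\log(1+|l_n|)+4\log(1+|n|)$. Using the elementary inequality $\log(1+x)\leq \eta x^\theta+C(\theta)\eta^{-1/\theta}$ (valid for all $x\geq 0$, $\eta>0$, obtained by splitting at $x=\eta^{-1/\theta}$ and using concavity of $\log$), one converts the $\log(1+|n|)$ contribution, summed over modes with $l_n\neq 0$, into $\eta\sum_n(k_n+k_n')|n|^\theta$ plus $O(\eta^{-1/\theta})$ per mode. Choosing $\eta\sim\delta$ absorbs the main term into $\delta\sum_n(2a_n+k_n+k_n')|n|^\theta$, while $\log(1+|l_n|)$ is handled analogously using $|l_n|\leq k_n+k_n'$.

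Third, the remaining task is to upgrade $\delta\sum_n(2a_n+k_n+k_n')|n|^\theta$ to $\delta\bigl(\sum_n(2a_n+k_n+k_n')|n|^\theta-2(n_1^*)^\theta\bigr)$, i.e.\ to produce an extra savings of $2\delta(n_1^*)^\theta$. This is where Lemma \ref{005} intervenes: the momentum condition (\ref{050901}) gives
\[
\sum_n(2a_n+k_n+k_n')|n|^\theta-2(n_1^*)^\theta\geq (2-2^\theta)\sum_{i\geq3}(n_i^*)^\theta,
\]
so the top-mode contribution $\log(1+(n_1^*)^4)$ in the product can be re-expressed via the tail through $|n_1^*|\leq \sum_{i\geq 2}|n_i^*|$ (again from momentum) and absorbed into $\sum_{i\geq 3}(n_i^*)^\theta$ at the cost of a constant factor. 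I expect this to be the main obstacle: the bookkeeping must produce the specific exponent $\delta^{-5/\theta}$, which arises because each use of $\log(1+x)\leq \eta x^\theta+C\eta^{-1/\theta}$ costs one power $\eta^{-1/\theta}$, and the reduction must be applied to $\log(1+l_n^2)$, $\log(1+|n|^4)$, and to the $O(\delta^{-1/\theta})$ ``small'' modes $|n|\leq\delta^{-1/\theta}$ that escape the $|n|^\theta$ savings. Exponentiating then yields the stated bound.
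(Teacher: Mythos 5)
The proposal correctly identifies the first step (using the Diophantine condition, with the integer part $\sum l_nn^2$ disappearing modulo $1$), but then reduces the lemma to a product estimate that is false as stated, and the attempted repair in the final step does not work.

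The claimed reduction target
\[
\prod_n(1+l_n^2|n|^4)\leq e^{C(\theta)\delta^{-5/\theta}}\cdot e^{\delta\bigl(\sum_n(2a_n+k_n+k_n')|n|^\theta-2(n_1^*)^\theta\bigr)}
\]
fails without a near-resonance constraint. Take $a=0$, $k$ supported on $\{N,1\}$, $k'$ supported on $\{N-1,2\}$ (so momentum $\sum l_nn=0$ holds). Then $(n_1^*,n_2^*,n_3^*,n_4^*)=(N,N-1,2,1)$; the left side is $\sim N^8$, while the right-hand exponent is $\delta\bigl((n_2^*)^\theta-(n_1^*)^\theta+(n_3^*)^\theta+(n_4^*)^\theta\bigr)=O(\delta)$. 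Letting $N\to\infty$ kills the inequality. The point, and what your proposal omits, is that the Diophantine bound is only invoked in the near-resonant regime, where $\bigl|\sum(k_n-k'_n)n^2\bigr|\lesssim\sum|k_n-k'_n|$; the paper makes this a genuine two-case split (Lemma \ref{S6}, Cases 1/2). In the near-resonant case the arithmetic Lemma \ref{a1} gives the key bound $\sum_n|k_n-k_n'||n|^{\theta/2}\leq 3\cdot 6^{\theta/2}\sum_{i\geq3}(n_i^*)^\theta$ — note the exponent $\theta/2$, not $\theta$. This exponent arises precisely because near-resonance controls $(n_1^*)^2-(n_2^*)^2$, and combined with $n_1^*-n_2^*\leq\sum_{i\geq3}n_i^*$ one gets $\max\{n_1^*,n_2^*\}\lesssim\sum_{i\geq3}(n_i^*)^2$; taking $\theta/2$-th powers and using subadditivity then lands in $\sum_{i\geq3}(n_i^*)^\theta$. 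Momentum alone gives only $n_1^*\leq\sum_{i\geq2}n_i^*$, which as your own third step implicitly notices leaves $(n_2^*)^\theta$ uncontrolled — in the example above $n_2^*\sim n_1^*$ are both huge while the tail is $O(1)$, so the ``absorbed at the cost of a constant factor'' claim is wrong.

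There is a secondary problem in your second step. Applying $\log(1+x)\leq\eta x^\theta+C\eta^{-1/\theta}$ once per mode with $l_n\neq 0$ accumulates a total error $O(\eta^{-1/\theta})\cdot\#\{n:l_n\neq0\}$, and the number of active modes is not bounded by $\delta^{-O(1)}$. The paper avoids this by working with the exponent $\theta/2$ from Lemma \ref{a1}: for $|n|>N=(16/(\theta\delta))^{4/\theta}$ the savings $\delta|l_n|^{\theta/2}|n|^{\theta/2}$ actually beats $8\ln(|l_n||n|)$ outright, so those modes contribute nothing, while for $|n|\leq N$ (at most $2N+1$ modes) each factor is bounded by the maximum of $-\delta x^{\theta/2}+8\ln x$, giving the total $N\cdot O(\delta^{-1}\ln\delta^{-1})\lesssim\delta^{-5/\theta}$. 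To make your argument work you would need to introduce the near-resonance case split, prove (or cite) the counterpart of Lemma \ref{a1} to pass to the $\theta/2$-weighted sum, and perform the cutoff at $|n|\sim\delta^{-4/\theta}$ rather than $\delta^{-1/\theta}$.
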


\begin{proof}

We distinguish two cases:

$\textbf{Case. 1.}$ $$\left|\sum_{n\in\mathbb{Z}}(k_n-k'_n)n^2\right|>10\sum_{n\in\mathbb{Z}}|k_n-k'_n|.$$

Since $|\widetilde{V}_n|\leq2$, we have
$$\left|\sum_{n\in\mathbb{Z}}(k_n-k'_n)(n^2+\widetilde{V}_n)\right|>10\sum_{n\in\mathbb{Z}}|k_n-k'_n|-2\sum_{n\in\mathbb{Z}}|k_n-k'_n|\geq1,$$
where the last inequality is based on $\mbox{supp}\ k\bigcap \mbox{supp}\ k'=\emptyset$.
There is no small divisor and (\ref{S7}) holds trivially.

$\textbf{Case. 2.}$ $$\left|\sum_{n\in\mathbb{Z}}(k_n-k'_n)n^2\right|\leq 10\sum_{n\in\mathbb{Z}}|k_n-k'_n|.$$

In this case, we always assume  $$\left|\sum_{n\in\mathbb{Z}}(k_n-k_n')(n^2+\widetilde V_n)\right|\leq1,$$otherwise there is no small divisor.

Firstly, one has
\begin{eqnarray}\label{S8}
\nonumber\sum_{n\in\mathbb{Z}}|k_n-k_n'||n|^{\theta/2}
&\leq&\nonumber{3\cdot 6^{\theta/2}}\left(\sum_{i\geq3}(n_i^*)^{\theta}\right)\qquad (\mbox{in view of Lemma \ref{a1}})\\
&\leq&\frac{3\cdot 6^{\theta/2}}{2-2^{\theta}}\left(\sum_{n\in\mathbb{Z}}(2a_n+k_n+k_n')|n|^{\theta}-2(n_1^*)^{\theta}\right),
\end{eqnarray}
where the last inequality is based on Lemma \ref{005}.

Since $$\sum_{n\in\mathbb{Z}}(k_n-k'_n)n^2\in\mathbb{Z},$$
the Diophantine property of $(\widetilde{V}_n)$ implies
\begin{equation}\label{S9}
\left|\sum_{n\in\mathbb{Z}}(k_n-k'_n)(n^2+\widetilde{V}_n)\right|\geq\frac\gamma2\prod_{n\in\mathbb{Z}}\frac{1}{1+{|k_n-k'_n|}^2|n|^4}.
\end{equation}
Hence,
\begin{eqnarray}
\nonumber&&{|{F}_{akk'}|}e^{-(\rho+\delta)(\sum_{n}(2a_n+k_n+k'_n)|n|^{\theta}-2(n_1^{*})^{\theta})}\\
&=&\nonumber\frac{|{B}_{akk'}|}{|\sum_{n}(k_n-k'_n)(n^2+\widetilde{V}_n)|}e^{-(\rho+\delta)(\sum_{n}(2a_n+k_n+k'_n)|n|^{\theta}-2(n_1^{*})^{\theta})}\\
&&\nonumber\mbox{(in view of (\ref{051304}))}\\
&=&\nonumber|{B}_{akk'}|e^{-\rho(\sum_{n}(2a_n+k_n+k'_n)|n|^{\theta}-2(n_1^{*})^{\theta})}
\\
&&\nonumber\times \frac{e^{-\delta(\sum_{n}(2a_n+k_n+k'_n)|n|^{\theta}-2(n_1^{*})^{\theta})}}{|\sum_{n}(k_n-k'_n)(n^2+\widetilde{V}_n)|}\\
\nonumber&\leq& 2\gamma^{-1}||{R_0}||_{\rho}^{+} \prod_{n}\left({1+{|k_n-k'_n|}^2|n|^4}\right)e^{-\delta\left(\sum_{n}(2a_n+k_n+k'_n)|n|^{\theta}-2(n_1^*)^{\theta}\right)}\\
\nonumber&&
 \mbox{(in view of (\ref{051302}) and (\ref{S9}))}\\
\nonumber&\leq&2\gamma^{-1} ||{R_0}||_{\rho}^+e^{\sum_{n}\ln(1+|k_n-k'_n|^2|n|^4)}e^{-\frac{3\cdot 6^{\theta/2}\delta}{2-2^{\theta}}\sum_n\left(|k_n-k_n'||n|^{\theta/2}\right)}\\
&&\nonumber\mbox{(in view of (\ref{S8}))}\\
\nonumber&\leq& 2\gamma^{-1} ||{R_0}||_{\rho}^+e^{\sum_{n}\ln(1+|k_n-k'_n|^2|n|^4)}e^{-\delta \sum_n\left(|k_n-k_n'||n|^{\theta/2}\right)}\\
&&\nonumber \mbox{(in view of $\frac{3\cdot 6^{\theta/2}}{2-2^{\theta}}>1$)}\\
&=&\nonumber2\gamma^{-1} ||{R_0}||_{\rho}^+e^{\sum_{{n:k_n\neq k'_n}}\ln(1+|k_n-k'_n|^2|n|^4)-\delta\sum_{n:k_n\neq k'_n}\left(|k_n-k_n'||n|^{\theta/2}\right)}\\
&\leq&\nonumber2\gamma^{-1} ||{R_0}||_{\rho}^+e^{8\left(\sum_{{n:k_n\neq k'_n}}\ln(|k_n-k'_n||n|)\right)+3-\delta\sum_{n:k_n\neq k'_n}\left(|k_n-k_n'|^{\theta/2}|n|^{\theta/2}\right)}\\
&&\mbox{(in view of $0<\theta<1$)}\nonumber\\
&=&\nonumber \frac{2e^{3}}{\gamma} ||{R_0}||_{\rho}^+e^{\sum_{n:k_n\neq k'_n}\left(8\ln(|k_n-k'_n||n|)-\delta|k_n-k_n'|^{\theta/2}|n|^{\theta/2}\right)}\\
&=&\nonumber\frac{2e^{3}}{\gamma} ||{R_0}||_{\rho}^+e^{\sum_{|n|\leq N:k_n\neq k'_n}\left(8\ln(|k_n-k'_n||n|)-\delta|k_n-k_n'|^{\theta/2}|n|^{\theta/2}\right)}\\
&&+\nonumber\frac{2e^{3}}{\gamma} ||{R_0}||_{\rho}^+e^{\sum_{n>N:k_n\neq k'_n}\left(8\ln(|k_n-k'_n||n|)-\delta|k_n-k_n'|^{\theta/2}|n|^{\theta/2}\right)}\\
&&\nonumber \mbox{(where $N=\left(\frac{16}{\theta\delta}\right)^{4/\theta}$)}\\
&=&\nonumber\frac{2e^{3}}{\gamma} ||{R_0}||_{\rho}^+e^{\left(\frac{16}{\theta\delta}\right)^{4/\theta}\cdot \frac{16}{\theta}\ln\left(\frac{16}{\theta\delta}\right)}\qquad{\mbox{(in view of  (\ref{051201}) below)}}\\
&&+\nonumber\frac{2e^{3}}{\gamma} ||{R_0}||_{\rho}^+\qquad\qquad\qquad\qquad\mbox{(in view of (\ref{051202}) below)}\\
&\leq&\label{051306}\frac{1}{\gamma}\cdot e^{C(\theta)\delta^{-\frac5\theta}} ||{R_0}||_{\rho}^+ \  \ \mbox {(for $0<\delta\ll1$)},
\end{eqnarray}
where $C(\theta)$ is a positive constant depending on $\theta$ only.

Therefore, in view of (\ref{051302}) and (\ref{051306}), we finish the proof of (\ref{S7}) for $i=0$.

It is easy to verify the following two facts that
\begin{equation}\label{051201}
\max_{x\geq 1} f(x)=f\left(\left(\frac{16}{\theta\delta}\right)^{2/\theta}\right)=-\frac {16}{\theta} +8\ln\left(\left(\frac{16}{\theta\delta}\right)^{2/\theta}\right)\leq \frac{16}{\theta}\ln\left(\frac{16}{\theta\delta}\right)
\end{equation}
with $f(x)=(-\delta x^{\theta/2}+8\ln x)$,
and when $|n|>N=\left(\frac{16}{\theta\delta}\right)^{4/\theta}, k_n\neq k'_n$, one has
\begin{equation}\label{051202}
-\delta\left(|k_n-k_n'|^{\theta/2}|n|^{\theta/2}\right)+8\ln(|k_n-k'_n||n|)<0\ \  \mbox {(for $0<\delta\ll1$)}.
\end{equation}
Similarly, one can prove (\ref{S7}) for $i=1$.
\end{proof}

\section{The new Hamiltonian}
In view of (\ref{051401}), we obtain the new Hamiltonian
\begin{equation}
H_+=H\circ \Phi=N_++R_+,
\end{equation}
where $N_+$ and $R_+$ are given in (\ref{051402}) and (\ref{051403}) respectively.
\subsection{Estimating Poisson Bracket and Symplectic Transformation}
To estimate the Hamiltonian $H_+$, we need the following two lemmas.
\begin{lemma}\label{010}
Let $\theta\in(0,1),\rho>0$ and $0<\delta_1,\delta_2\ll1$ (depending on $\theta,\rho$). Then one has
\begin{equation}\label{042704}
||\{H_1,H_2\}||_\rho\leq \frac{1}{\delta_2}\left(\frac{1}{\delta_1}\right)^{C({\theta}){\delta_1^{-\frac{1}{\theta}}}}||H_1||_{\rho-\delta_1}||H_2||_{\rho-\delta_2},
\end{equation}
where $C(\theta)$ is a positive constant depending on $\theta$ only.
\end{lemma}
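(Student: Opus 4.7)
The plan is to expand both Hamiltonians in monomials, $H_1=\sum B^{(1)}_{akk'}\mathcal{M}_{akk'}$ and $H_2=\sum B^{(2)}_{\tilde a\tilde k\tilde k'}\mathcal{M}_{\tilde a\tilde k\tilde k'}$, and to estimate the Poisson bracket one target monomial at a time. Since the $I_n(0)^{a_n}$ factors are constants, a direct calculation in the variables $(q_n,\bar q_n)$ gives
\[ \{\mathcal{M}_{akk'},\mathcal{M}_{\tilde a\tilde k\tilde k'}\}=\mathbf{i}\sum_{n\in\mathbb{Z}}(k_n\tilde k_n'-k_n'\tilde k_n)\,\mathcal{M}_{a+\tilde a,\,k+\tilde k-e_n,\,k'+\tilde k'-e_n}, \]
where $e_n$ is the indicator vector at mode $n$; for each target multi-index $(A,K,K')$ the corresponding coefficient in $\{H_1,H_2\}$ is therefore a finite sum over decompositions $A=a+\tilde a$, $K+e_n=k+\tilde k$, $K'+e_n=k'+\tilde k'$.

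The first key step is a comparison of weights. Writing $W=\sum_m(2A_m+K_m+K_m')|m|^\theta-2(N^*)^\theta$ for the target and $W_i$ for each factor, additivity of $\sum_m(2a_m+k_m+k_m')|m|^\theta$ under the product and the $-2|n|^\theta$ drop from the two derivatives, together with $N^*\le\max(n_1^{*(1)},n_1^{*(2)})$ and $|n|\le\min(n_1^{*(1)},n_1^{*(2)})$ (the differentiated mode lies in both supports whenever $k_n\tilde k_n'-k_n'\tilde k_n\neq 0$), yield $W_1+W_2\le W$. Moreover, the momentum conservation (\ref{050901}) and Lemma~\ref{005} give $W_i\ge 0$. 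Combining these with the definition of $||\cdot||_\rho$, each individual decomposition satisfies
\[ \frac{|B^{(1)}B^{(2)}|}{e^{\rho W}}\le ||H_1||_{\rho-\delta_1}\,||H_2||_{\rho-\delta_2}\,e^{-\delta_1 W_1-\delta_2 W_2}. \]

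What remains is the combinatorial summation over all splittings and over the differentiation mode $n$. At each mode $m$ the number of splittings of $(A_m,K_m,K_m')$ into $(a_m,\tilde a_m),(k_m,\tilde k_m),(k_m',\tilde k_m')$ is polynomial in $A_m+K_m+K_m'$, and one mimics the scheme of the proof of Lemma~\ref{S6}: from the bound $W_i\ge(2-2^\theta)\sum_{i\ge3}(n_i^*)^\theta$ of Lemma~\ref{005} one extracts a decay $e^{-c\delta_1|m|^{\theta/2}}$ per mode, which dominates the polynomial multiplicity once $|m|$ exceeds a threshold of order $(1/\delta_1)^{4/\theta}$; splitting the sum at this threshold and invoking a calculus inequality analogous to (\ref{051201}) produces the factor $(1/\delta_1)^{C(\theta)\delta_1^{-1/\theta}}$. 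The single-derivative character of the Poisson bracket, together with the residual $\delta_2$-gap in $||H_2||_{\rho-\delta_2}$, contributes the prefactor $1/\delta_2$ via a Cauchy-type summation over $n$. The main obstacle is exactly this quantitative combinatorics: one must verify that $e^{-\delta_1 W_1-\delta_2 W_2}$ damps the product of multiplicities across infinitely many modes and across the unbounded degrees of $H_1$ and $H_2$ uniformly enough to yield the sharp factor $(1/\delta_1)^{C(\theta)\delta_1^{-1/\theta}}$.
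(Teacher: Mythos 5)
Your opening steps are sound and in fact a cleaner bookkeeping than the paper's: the weight comparison $W_1+W_2\le W$ (from $N^*\le\max(n_1^{*(1)},n_1^{*(2)})$, $|n|\le\min(n_1^{*(1)},n_1^{*(2)})$, and the additivity of the $\theta$-weighted degree minus $2|n|^\theta$) together with $W_i\ge 0$ from Lemma~\ref{005} correctly reduces the lemma to bounding the combinatorial sum by $\frac{1}{\delta_2}(1/\delta_1)^{C(\theta)\delta_1^{-1/\theta}}$. But the sketch of that combinatorial bound has a genuine gap, and it is exactly at the point you flag as the ``main obstacle.''

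The proposed mechanism --- polynomial per-mode multiplicity dominated by a per-mode decay $e^{-c\delta_1|m|^{\theta/2}}$ once $|m|$ exceeds a $\delta_1$-dependent threshold, ``mimicking Lemma~\ref{S6}'' --- does not work here, because the only decay available from $W_i$ is $W_i\ge (2-2^\theta)\sum_{i\ge 3}(n_i^*)^\theta$: the two largest modes $n_1^*$, $n_2^*$ of each factor receive \emph{no} damping from the weight. Those are precisely the modes that can be huge, and the multiplicity of the splitting at such a mode cannot be absorbed by a per-mode decay that isn't there. The paper's proof closes this hole with a case split and, crucially, with \emph{momentum conservation} (equation~\eqref{050901}). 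If $|j|\le n_3^*$, the extra factor $e^{2\rho(|j|^\theta-(n_1^*)^\theta)}\le e^{(2-2^\theta)\delta_1((n_3^*)^\theta-(n_1^*)^\theta)}$ bootstraps decay onto $n_1^*$. Otherwise $j\in\{n_1,n_2\}$, so the degree of $H_1$ at $j$ is at most $2$, and one invokes Remark~\ref{042703}: once $\{n_i\}_{i\ge3}$ and \emph{one} of $n_1,n_2$ are fixed, the momentum constraint determines the other uniquely, so $(n_1,n_2)$ range over a set of size $\le\sum_n(2\alpha_n+\kappa_n+\kappa_n')$, and that polynomial count is killed by $e^{-\frac14(\delta_1\wedge\delta_2)(2-2^\theta)\sum(2\alpha_n+\kappa_n+\kappa_n')}$. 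Nothing in your write-up plays the role of this momentum-conservation argument, and without it the unbounded choice of the two undamped top modes is not controlled.

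A secondary inaccuracy: the $1/\delta_2$ prefactor does not arise from ``a Cauchy-type summation over $n$.'' In the paper it comes from bounding a single degree-like quantity: $\sup_n(K_n+K_n')\le 2\sum_{i\ge3}(N_i^*)^\theta$ (inequality~\eqref{042605}) combined with $x\,e^{-(2-2^\theta)\delta_2 x}\le \frac{1}{(2-2^\theta)\delta_2}$ (Lemma~\ref{8.6}). So the $\delta_2$-loss is spent on one scalar, not on a sum over the differentiation index.
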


\begin{proof}
Let
\begin{equation*}
H_1=\sum_{a,k,k'} b_{akk'}\mathcal{M}_{akk'}
\end{equation*}
and
\begin{equation*}
H_2=\sum_{A,K,K'} B_{AKK'}\mathcal{M}_{AKK'}.
\end{equation*}
Hence
\begin{equation*}
\{H_1,H_2\}=\sum_{a,k,k',A,K,K'}b_{akk'}B_{AKK'}\{\mathcal{M}_{akk'},\mathcal{M}_{AKK'}\}.
\end{equation*}
Write
\begin{eqnarray*}
\{\mathcal{M}_{akk'},\mathcal{M}_{AKK'}\}
&=&\frac{1}{2\textbf{i}}\sum_j\left(\prod_{n\neq j}I_n(0)^{a_n+A_n}q_n^{k_n+K_n}\bar{q}_n^{k_n'+K_n'}\right)\\
&&\times\left((k_jK_j'-k_j'K_j)I_j(0)^{a_j+A_j}q_j^{k_j+K_j-1}\bar{q}_j^{k_j'+K_j'-1}\right).
\end{eqnarray*}
Then the coefficient of $\prod_{n}I_n(0)^{\alpha_n}q_n^{\kappa_n}\bar{q}_n^{\kappa'_n}=\mathcal{M}_{\alpha\kappa\kappa'}$ is given by
\begin{equation}\label{006}
2\textbf{i} B_{\alpha\kappa\kappa'}=\sum_{j}\sum_{*}\sum_{**}(k_jK_j'-k_j'K_j)b_{akk'}B_{AKK'},
\end{equation}
where
\begin{equation*}
\sum_{*}=\sum_{a,A \atop a+A=\alpha},
\end{equation*}
and
\begin{equation*}
\sum_{**}=\sum_{k,k',K,K'\atop \mbox{when}\ n\neq j, k_n+K_n=\kappa_n,k_n'+K_n'=\kappa_n';\mbox{when}\ n=j, k_n+K_n-1=\kappa_n,k_n'+K_n'-1=\kappa_n'}.
\end{equation*}
\begin{remark}
Note that
\begin{equation}\label{052802}
\sum_n(2\alpha_n+\kappa_n+\kappa_n')=\sum_n(2a_n+k_n+k_n')+\sum_n(2A_n+K_n+K_n')-2
\end{equation}
and
\begin{equation}\label{052801}
\sum_n(2\alpha_n+\kappa_n+\kappa_n')|n|=\sum_n(2a_n+k_n+k_n')|n|+\sum_n(2A_n+K_n+K_n')|n|-2|j|.
\end{equation}
\end{remark}
In view of (\ref{042602}), one has
\begin{eqnarray}
\nonumber |b_{akk'}|
 \nonumber &\leq& ||H_1||_{\rho-\delta_1}e^{(\rho-\delta_1)\sum_{n}(2a_n+k_n+k_n')|n|^{\theta}-2(\rho-\delta_1)(n_1^*)^{\theta}}\\
&=&||H_1||_{\rho-\delta_1}e^{\rho\sum_{n}(2a_n+k_n+k_n')|n|^{\theta}-2\rho(n_1^*)^{\theta}}
e^{-\delta_1\sum_{n}(2a_n+k_n+k_n')|n|^{\theta}+2\delta_1(n_1^*)^{\theta}}\nonumber\\
\label{007}&\leq&||H_1||_{\rho-\delta_1}e^{\rho\sum_{n}(2a_n+k_n+k_n')|n|^{\theta}-2\rho(n_1^*)^{\theta}}e^{-(2-2^{\theta})\delta_1
\sum_{i\geq 3}(n_i^*)^{\theta}},
\end{eqnarray}
where the last inequality is based on Lemma \ref{005}.

Similarly,
\begin{equation}
|B_{AKK'}|\leq \label{008}||H_2||_{\rho-\delta_2}e^{\rho\sum_{n}(2A_n+K_n+K_n')|n|^{\theta}-2\rho(N_1^*)^{\theta}}e^{-(2-2^{\theta})\delta_2\sum_{i\geq 3}(N_i^*)^{\theta}}.
\end{equation}
Substitution of (\ref{007}) and (\ref{008}) in (\ref{006}) gives
\begin{eqnarray*}
\label{009}|2\textbf{i}B_{\alpha\kappa\kappa'}|
\nonumber&\leq&||H_1||_{\rho-\delta_1}||H_2||_{\rho-\delta_2}\sum_{j}\sum_{*}\sum_{**}|k_jK_j'-k_j'K_j|\\
&&\times e^{\rho\left(\sum_{n}(2a_n+k_n+k_n')|n|^{\theta}-2(n_1^*)^{\theta}+\sum_{n}(2A_n+K_n+K_n')|n|^{\theta}-2(N_1^*)^{\theta}\right)}\\
\nonumber&&\times e^{-(2-2^{\theta})\delta_1\sum_{i\geq 3}(n_i^*)^{\theta}}e^{-(2-2^{\theta})\delta_2\sum_{i\geq 3}(N_i^*)^{\theta}}\\
\nonumber&=&||H_1||_{\rho-\delta_1}||H_2||_{\rho-\delta_2}\sum_{j}\sum_{*}\sum_{**}|k_jK_j'-k_j'K_j|\\
&&\times e^{\rho\left(\sum_{n}(2\alpha_n+\kappa_n+\kappa_n')|n|^{\theta}+2|j|^{\theta}\right)}e^{-2\rho(n_1^*)^{\theta}-2\rho(N_1^*)^{\theta}}\\
\nonumber&&\times  e^{-(2-2^{\theta})\delta_1\sum_{i\geq 3}(n_i^*)^{\theta}}e^{-(2-2^{\theta})\delta_2\sum_{i\geq 3}(N_i^*)^{\theta}}\\
&&\nonumber(\mbox{in view of (\ref{052801})})\\
\nonumber&=&||H_1||_{\rho-\delta_1}||H_2||_{\rho-\delta_2}e^{\rho(\sum_{n}(2\alpha_n+\kappa_n+\kappa_n')|n|^{\theta}-2(\nu_1^*)^{\theta})}\\
&&\times\sum_{j}\sum_{*}\sum_{**}|k_jK_j'-k_j'K_j| e^{2\rho(|j|^{\theta}+(\nu_1^*)^{\theta}-(n_1^*)^{\theta}-(N_1^*)^{\theta})}\\
&&\times e^{-(2-2^{\theta})\delta_1\sum_{i\geq 3}(n_i^*)^{\theta}}e^{-(2-2^{\theta})\delta_2\sum_{i\geq 3}(N_i^*)^{\theta}},
\end{eqnarray*}
where
\begin{equation*}
\nu_1^*=\max\{|n|:\alpha_n+\kappa_n+\kappa_n'\neq0\}.
\end{equation*}
To show (\ref{042704}) holds, it suffices to prove
\begin{equation}\label{041802}
I\leq \frac{1}{\delta_2}\left(\frac{1}{\delta_1}\right)^{C({\theta}){\delta_1^{-\frac{1}{\theta}}}},
\end{equation}
where
\begin{eqnarray*}
I&=&\sum_{j}\sum_{*}\sum_{**}|k_jK_j'-k_j'K_j|
e^{2\rho(|j|^{\theta}+(\nu_1^*)^{\theta}-(n_1^*)^{\theta}-(N_1^*)^{\theta})}\\
\nonumber&&\times e^{-(2-2^{\theta})\delta_1\sum_{i\geq 3}|n_i|^{\theta}}
e^{-(2-2^{\theta})\delta_2\sum_{i\geq 3}|N_i|^{\theta}}.
\end{eqnarray*}
To this end, we first note some simple facts:

$\textbf{1}.$ If $j\notin\ \mbox{supp}\ (k+k') \bigcup\ \mbox{supp}\ (K+K')$, then
\begin{equation*}
\frac{\partial\mathcal{M}_{akk'}}{\partial q_j}
\frac{\partial\mathcal{M}_{AKK'}}{\partial \bar{q}_j}-\frac{\partial\mathcal{M}_{akk'}}{\partial \bar{q}_j}
\frac{\partial\mathcal{M}_{AKK'}}{\partial {q}_j}=0.
\end{equation*}
Hence we always assume $j\in\ \mbox{supp}\ (k+k') \bigcup\ \mbox{supp}\ (K+K')$. Therefore one has
\begin{equation*}
|j|\leq \min\{n_1^*,N^*_1\}.
\end{equation*}

$\textbf{2}.$ The following inequality always holds
\begin{equation}\label{041801}
\nu_1^*\leq \max\{n_1^*,N_1^*\},
\end{equation}
and then one has
\begin{equation*}
|j|^{\theta}+(\nu_1^*)^{\theta}-(n_1^*)^{\theta}-(N_1^*)^{\theta}\leq 0.
\end{equation*}

$\textbf{3}.$ It is easy to see
\begin{eqnarray}
\nonumber\sum_{i\geq 1}(n_i^*)^{\theta}
\nonumber&=&\sum_n(2a_n+k_n+k_n')|n|^{\theta}\\
\nonumber&\geq&\sum_n(2a_n+k_n+k_n')\\
\label{042604}&\geq&\sum_n(k_n+k_n')
\end{eqnarray}
and
\begin{eqnarray}
\nonumber\sum_{i\geq 3}(N_i^*)^{\theta}
\nonumber&\geq&\sum_n(2A_n+K_n+K_n')-2\\
\nonumber&\geq&\frac12\sum_n(2A_n+K_n+K_n')\\
\label{042605}&\geq&\frac12\sum_{n}(K_n+K_n').
\end{eqnarray}
Based on (\ref{042604}) and (\ref{042605}), we obtain
\begin{eqnarray}
\sum_{n}(k_n+k_n')(K_n+K_n')\nonumber
&\leq&\nonumber \left(\sup_{n}(K_n+K_n')\right)\left(\sum_{n}(k_n+k_n')\right)\\
&\leq& 2\left(\sum_{i\geq 1}(n_i^*)^{\theta}\right)\left(\sum_{i\geq 3}(N_i^*)^{\theta}\label{042606}\right).
\end{eqnarray}

Now we will prove the inequality (\ref{041802}) holds:

 $\textbf{Case. 1.} \ \nu_1^*\leq N_1^*$.

$\textbf{Case. 1.1.}\ |j|\leq n_3^*.$

Then one has
\begin{eqnarray}
\nonumber e^{2\rho(|j|^{\theta}-(n_1^*)^{\theta})}
&\leq&\nonumber e^{2\rho((n_3^*)^{\theta}-(n_1^*)^{\theta})}\\
&\leq& e^{(2-2^{\theta})\delta_1((n_3^*)^{\theta}-(n_1^*)^{\theta})}\label{051001},
\end{eqnarray}
where using $0<\delta_1\ll1$ depending on $\theta$ and $\rho$.
Hence
\begin{eqnarray}
\nonumber &&e^{2\rho(|j|^{\theta}+(\nu_1^*)^{\theta}-(n_1^*)^{\theta}-(N_1^*)^{\theta})}e^{-(2-2^{\theta})\delta_1\sum_{i\geq 3}(n_i^*)^{\theta}}\\
&\leq&\nonumber e^{(2-2^{\theta})\delta_1((n_3^*)^{\theta}-(n_1^*)^{\theta})}
e^{-(2-2^{\theta})\delta_1\sum_{i\geq 3}(n_i^*)^{\theta}}\\
&&\nonumber \mbox{(in view of $\nu_1^*\leq N_1^*$ and (\ref{051001}))}\\
&=&\nonumber e^{-(2-2^{\theta})\delta_1((n_1^*)^{\theta}+\sum_{i\geq 4}(n_i^*)^{\theta})}\\
&\leq& e^{-\frac{(2-2^{\theta})\delta_1}3\sum_{i\geq 1}(n_i^*)^{\theta}}\label{042603}.
\end{eqnarray}

\begin{remark}\label{042607}
Note that if $j,a,k,k'$ are specified, and then $A,K,K'$ are uniquely determined.
\end{remark}

In view of (\ref{042603}), we have
\begin{eqnarray*}
I&\leq&\sum_{j}\sum_{*}\sum_{**}(k_j+k_j')(K_j+K_j')
e^{-\frac{(2-2^{\theta})\delta_1}3\sum_{i\geq 1}(n_i^*)^{\theta}}e^{-(2-2^{\theta})\delta_2\sum_{i\geq 3}(N_i^*)^{\theta}}\\
&=&\sum_{a,k,k'}\sum_{j}(k_j+k_j')(K_j+K_j')
e^{-\frac{(2-2^{\theta})\delta_1}3\sum_{i\geq 1}(n_i^*)^{\theta}}e^{-(2-2^{\theta})\delta_2\sum_{i\geq 3}(N_i^*)^{\theta}}\\
&&\mbox{(in view of Remark \ref{042607}, one has $\sum_{a,k,k'}\sum_{j}=\sum_{*}\sum_{**}\sum_{j}$)}\\
&\leq&\sum_{a,k,k'}2\left(\sum_{i\geq 1}(n_i^*)^{\theta}\right)\left(\sum_{i\geq 3}(N_i^*)^{\theta}\label{042606}\right)e^{-\frac{(2-2^{\theta})\delta_1}3\sum_{i\geq 1}(n_i^*)^{\theta}}e^{-(2-2^{\theta})\delta_2\sum_{i\geq 3}(N_i^*)^{\theta}}\\
&&\mbox{(in view of the inequality (\ref{042606}))}\\
&=&2\sum_{a,k,k'}\left(\sum_{i\geq 1}(n_i^*)^{\theta}
e^{-\frac{(2-2^{\theta})\delta_1}3\sum_{i\geq 1}(n_i^*)^{\theta}}\right)\left(\sum_{i\geq 3}(N_i^*)^{\theta}e^{-(2-2^{\theta})\delta_2\sum_{i\geq 3}|N_i|^{\theta}}\right)\\
&\leq&\frac{2}{(2-2^{\theta})\delta_2}\sum_{a,k,k'}\sum_{i\geq 1}(n_i^*)^{\theta}
e^{-\frac{(2-2^{\theta})\delta_1}3\sum_{i\geq 1}(n_i^*)^{\theta}}\qquad \mbox{(in view of (\ref{042805}))}\\&\leq&\frac{24}{(2-2^{\theta})^2\delta_1\delta_2}\sum_{a,k,k'}
e^{-\frac{(2-2^{\theta})\delta_1}4\sum_{i\geq 1}(n_i^*)^{\theta}}\qquad \mbox{(in view of (\ref{042805}) again)}\\
&=&\frac{24}{(2-2^{\theta})^2\delta_1\delta_2}\sum_{a,k,k'}
e^{-\frac{(2-2^{\theta})\delta_1}4\sum_{n}(2a_n+k_n+k'_n)|n|^{\theta}}\\
&\leq&\frac{24}{(2-2^{\theta})^2\delta_1\delta_2}\left(\sum_{a}
e^{-\frac{(2-2^{\theta})\delta_1}4\sum_{n}2a_n|n|^{\theta}}\right)
\left(\sum_{k}e^{-\frac{(2-2^{\theta})\delta_1}4\sum_{n}k_n|n|^{\theta}}\right)^2
\\
&\leq&\frac{24}{(2-2^{\theta})^2\delta_1\delta_2}\prod_{n\in\mathbb{Z}}\left({1-e^{-\frac{(2-2^{\theta})\delta_1}2|n|^{\theta}}}\right)^{-1}
\left({1-e^{-\frac{(2-2^{\theta})\delta_1}4|n|^{\theta}}}\right)^{-2}      \\
&&\nonumber\mbox{(which is based on Lemma \ref{a3})}\\
&\leq&\frac{C_1(\theta)}{\delta_1\delta_2}\left(\frac{1}{\delta_1}\right)^{C_2{(\theta)}{\delta_1^{-\frac{1}{\theta}}}}\ \ \mbox{(in view of (\ref{0418010}))}\\
&\leq&\frac{1}{\delta_2}\left(\frac{1}{\delta_1}\right)^{C{(\theta)}{\delta_1^{-\frac{1}{\theta}}}},
\end{eqnarray*}
where the last inequality is based on $0<\delta_1\ll1$ and $C(\theta), C_1(\theta), C_2(\theta)$ are positive constants depending on $\theta$ only.

$\textbf{Case. 1.2.}\ j\in\{n_1,n_2\},\ |n_1|=n_1^*,\ |n_2|=n_2^*.$

If $2a_j+k_j+k'_j>2$, then $|j|\leq n_3^*$, we are in $\textbf{Case. 1.1.}$. Hence in what follows, we always assume
\begin{equation*}
2a_j+k_j+k'_j\leq2,
\end{equation*}
which implies
\begin{equation}\label{2.24}
k_j+k_j'\leq 2.
\end{equation}
From (\ref{2.24}) and in view of $j\in\{n_1,n_2\}$, it follows that
\begin{eqnarray*}
I\nonumber
\nonumber&\leq&2\sum_{a,k,k'}(K_{n_1}+K'_{n_1}+K_{n_2}+K'_{n_2}) \\
\label{2.25}&&\times e^{-(2-2^{\theta})\delta_1\sum_{i\geq 3}(n_i^*)^{\theta}{-(2-2^{\theta})\delta_2\sum_{i\geq 3}(N_i^*)^{\theta}}}.
\end{eqnarray*}
In view of (\ref{052802}) and (\ref{042605}), we have
\begin{eqnarray}
\sum_{n}(2\alpha_n+\kappa_n+\kappa'_n)
\leq\label{042701}2\sum_{i\geq3}(n_{i}^{*})^{\theta}+2\sum_{i\geq3}(N_{j}^{*})^{\theta}.
\end{eqnarray}
Moreover, note that  $\forall j$,
\begin{equation*}
 K_j+K'_j\leq \kappa_j+\kappa'_j-k_j-k'_j+2\leq\kappa_j+\kappa'_j+2.
\end{equation*}
Hence,
\begin{eqnarray}
\nonumber I\nonumber&\leq& 2\sum_{a,k,k'}(\kappa_{n_1}+\kappa'_{n_1}+\kappa_{n_2}+\kappa'_{n_2}+4) \\
\nonumber&&\times e^{-(2-2^{\theta})\delta_1\sum_{i\geq 3}(n_i^*)^{\theta}{-(2-2^{\theta})\delta_2\sum_{i\geq 3}(N_i^*)^{\theta}}}\\
\nonumber &\leq& 2\sum_{a,k,k'}(\kappa_{n_1}+\kappa'_{n_1}+\kappa_{n_2}+\kappa'_{n_2}+4)\\
\nonumber&&\times e^{-\frac12(2-2^{\theta})\delta_1\sum_{i\geq 3}(n_i^*)^{\theta}}e^{{-\frac12(2-2^{\theta})(\delta_2\sum_{i\geq 3}(N_i^*)^{\theta}+\delta_1\sum_{i\geq 3}(n_i^*)^{\theta})}}\\
\label{2.26} &\leq& 2\sum_{a,k,k'}(\kappa_{n_1}+\kappa'_{n_1}+\kappa_{n_2}+\kappa'_{n_2}+4) \\
\nonumber&&\times e^{-\frac12(2-2^{\theta})\delta_1\sum_{i\geq 3}(n_i^*)^{\theta}}e^{-\frac14(\delta_1\wedge\delta_2)(2-2^\theta)\sum_{n}(2\alpha_n+\kappa_n+\kappa'_n)},
\end{eqnarray}
where the last inequality is based on (\ref{042701}) and $\delta_1\wedge\delta_2=\min\{\delta_1,\delta_2\}$.

\begin{remark}\label{042703}Obviously, $\{n_1,n_2\}\cap \mathrm{supp}\ \mathcal{M}_{\alpha\kappa\kappa'}\neq \emptyset$, and if $\{n_i\}_{i\geq 3}$ and $n_1$ (resp. $n_2$) is specified, then $n_2$ (resp. $n_1$) is determined uniquely. Thus $n_1,n_2$ range in a set of cardinality no more than \begin{equation}\label{042702}\#\mathrm{supp} \ \mathcal{M}_{\alpha\kappa\kappa'}\leq\sum_{n}(2\alpha_n+\kappa_n+\kappa'_n).
\end{equation}
\end{remark}
Also, if $\{n_i\}_{i\geq 1}$ is given, then $\{2a_n+k_n+k'_n\}_{n\in\mathbb{Z}}$ is specified, and hence $(a,k,k')$ is specified up to a factor of
$$\prod_{n}(1+l_n^2),$$
where
$$l_n=\#\{j:n_j=n\}.$$
Following the inequality (\ref{2.26}), we thus obtain
\begin{eqnarray}
\nonumber I&\leq& 2\sum_{\{n_i\}_{i\geq1}}\prod_{m}(1+l_m^2)(\kappa_{n_1}+\kappa'_{n_1}+\kappa_{n_2}+\kappa'_{n_2}+4) \\
\nonumber&&\times e^{-\frac12(2-2^{\theta})\delta_1\sum_{i\geq 3}(n_i^*)^{\theta}}e^{-\frac14(\delta_1\wedge\delta_2)(2-2^\theta)\sum_{n}(2\alpha_n+\kappa_n+\kappa'_n)}\\
\nonumber&=& 2\sum_{\{n_i\}_{i\geq3}}\prod_{m}(1+l_m^2)\left(\sum_{n_1,n_2}(\kappa_{n_1}+\kappa'_{n_1}+\kappa_{n_2}+\kappa'_{n_2}+4)\right) \\
\nonumber&&\times e^{-\frac12(2-2^{\theta})\delta_1\sum_{i\geq 3}(n_i^*)^{\theta}}e^{-\frac14(\delta_1\wedge\delta_2)(2-2^\theta)\sum_{n}(2\alpha_n+\kappa_n+\kappa'_n)}\\
\nonumber& \leq & 2\sum_{\{n_i\}_{i\geq3}}\prod_{m}(1+l_m^2)\left(\sum_{n}(\kappa_{n}+\kappa'_{n})+4\#\mathrm{supp}\ \mathcal{M}_{\alpha\kappa\kappa'}\right) \\
\nonumber&&\times e^{-\frac12(2-2^{\theta})\delta_1\sum_{i\geq 3}(n_i^*)^{\theta}
}e^{-\frac14(\delta_1\wedge\delta_2)(2-2^\theta)\sum_{n}(2\alpha_n+\kappa_n+\kappa'_n)}\\
&&\nonumber\mbox{(the inequality is based on Remark \ref{042703})}\\
\nonumber&\leq& 10\sum_{\{n_i\}_{i\geq3}}\prod_{m}(1+l_m^2)e^{-\frac12(2-2^{\theta})\delta_1\sum_{i\geq 3}(n_i^*)^{\theta}} \\
\nonumber&&\times\left(\sum_{n}(2\alpha_n+\kappa_{n}+\kappa'_{n})\right)e^{
-\frac14(\delta_1\wedge\delta_2)(2-2^\theta)\sum_{n}(2\alpha_n+\kappa_n+\kappa'_n)}\\
&&\mbox{ (based on (\ref{042702}))}\nonumber\\
&\leq & \nonumber\frac{C_3({\theta})}{\delta_1\wedge\delta_2}\sum_{\{n_i\}_{i\geq3}}\prod_{|m|\leq|n_1|}(1+l_m^2)e^{-\frac12(2-2^{\theta})\delta_1\sum_{i\geq 3}(n_i^*)^{\theta}}\quad \\
&&\nonumber\mbox{(in view of (\ref{042805}))}\\
\nonumber&=&\frac{C_3({\theta})}{\delta_1\wedge\delta_2}\sum_{\{l_m\}_{|m|\leq|n_3|}}\left(\prod_{|m|\leq|n_1|}(1+l_m^2)e^{-\frac16(2-2^{\theta})\delta_1\sum_{|m|\leq|n_3|}l_m|m|^{\theta}}\right)
\nonumber\\&&\times \nonumber e^{-\frac13(2-2^{\theta})\delta_1\sum_{|m|\leq|n_3|}l_m|m|^{\theta}}\\
\nonumber&\leq&\frac{C_3({\theta})}{\delta_1\wedge\delta_2}\sup_{\{l_m\}_{|m|\leq|n_3|}}\left(\prod_{|m|\leq|n_1|}(1+l_m^2)e^{-\frac16(2-2^{\theta})\delta_1\sum_{|m|\leq|n_3|}l_m|m|^{\theta}}\right)
\\
\nonumber&&\times\sum_{\{l_m\}_{|m|\leq|n_3|}}e^{-\frac13(2-2^{\theta})\delta_1\sum_{|m|\leq|n_3|}l_m|m|^{\theta}}\\
\nonumber&\leq&\frac{C_3({\theta})}{\delta_1\wedge\delta_2}
\left(\frac{1}{\delta_1}\right)^{C_4({\theta}){\delta_1^{-\frac{1}{\theta}}}}\sum_{\{l_m\}_{|m|\leq|n_3|}}e^{-\frac13(2-2^{\theta})\delta_1\sum_{|m|\leq|n_3|}l_m|m|^{\theta}}\\
\nonumber&&\mbox{(in view of (\ref{201606031}))}\\
\nonumber& \leq & \frac{C_3({\theta})}{\delta_1\wedge\delta_2}\left(\frac{1}{\delta_1}\right)^{C_4({\theta}){\delta_1^{-\frac{1}{\theta}}}}\prod_{m\in\mathbb{Z}}\frac{1}{1-e^{-\frac13(2-2^{\theta})\delta_1 |m|^{\theta}}}\quad\\
&&\nonumber \mbox{(in view of (\ref{041809}))}\\
\nonumber&\leq&\frac{1}{\delta_2}\left(\frac{1}{\delta_1}\right)^{C({\theta}){\delta_1^{-\frac{1}{\theta}}}}\ \mbox{(in view of (\ref{0418010}) and $0<\delta_1\ll1$)},
\end{eqnarray}
where $C(\theta)$ is a positive constant depending on $\theta$ only.

$\textbf{Case. 2.}\ \nu_1^*>N_1^*.$

In view of (\ref{041801}), one has $n_1^*=\nu_1^*$. Hence,  $n_2$ is determined by $n_1$ and $\{n_i\}_{i\geq 3}$. Similar as Case 1.2, we have
\begin{eqnarray*}
I
&\leq& \frac{C'(\theta)}{\delta_2}\sum_{\{n_i\}_{i\geq3}}e^{-\frac12(2-2^{\theta})\delta_1\sum_{i\geq 3}(n_i^*)^{\theta}}\prod_{m}(1+l_m^2)\\
&\leq&\frac{1}{\delta_2}\left(\frac{1}{\delta_1}\right)^{C({\theta}){\delta_1^{-\frac{1}{\theta}}}},
\end{eqnarray*}
where $C'(\theta)$ is some positive constant depending on $\theta$ only.

\end{proof}

Next, we will estimate the symplectic transformation $\Phi_F$ induced by the Hamiltonian function $F$. Actually, we have
\begin{lemma}\label{E1}
Let $\theta\in(0,1),\rho>0$ and $0<\delta_1,\delta_2\ll1$ (depending on $\theta,\rho$). Assume further \begin{equation}\label{042801}\frac{1}{\delta_2}\left(\frac{1}{\delta_1}\right)^{C({\theta}){\delta_1^{-\frac{1}{\theta}}}}||F||_{\rho-\delta_1} \ll 1,
\end{equation}
where $C(\theta)$ is the constant given in (\ref{042704}) in Lemma \ref{010}.
Then for any Hamiltonian function $H$, we get
\begin{equation}
||H\circ\Phi_F||_\rho
\leq\left(1+\frac{1}{\delta_2}\left(\frac{1}{\delta_1}\right)^{C_1({\theta}){\delta_1^{-\frac{1}{\theta}}}}||F||_{\rho-\delta_1}\right)
||H||_{\rho-\delta_2},
\end{equation}
where $C_1(\theta)$ is a positive constant depending only on $\theta$.
\end{lemma}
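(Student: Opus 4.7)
The plan is to use the Lie series expansion
\begin{equation*}
H\circ \Phi_F=\sum_{k=0}^{\infty}\frac{1}{k!}H_k,\qquad H_0=H,\quad H_k=\{H_{k-1},F\},
\end{equation*}
and to bound each $\|H_k\|_\rho$ by iterating the Poisson bracket estimate of Lemma \ref{010}. The smallness hypothesis (\ref{042801}) is tuned so that the resulting series collapses into a geometric sum of size $1+O(\|F\|_{\rho-\delta_1})$, matching the claimed inequality.

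Concretely, I would introduce a schedule of intermediate weights $r_0=\rho-\delta_2<r_1<\cdots$ with $r_k\nearrow\rho$ and $\eta_k:=r_k-r_{k-1}$, and apply Lemma \ref{010} at step $k$ with outer weight $r_k$, placing $F$ in the first (exponential) slot with shrinkage $\alpha_k:=r_k-(\rho-\delta_1)$ so that $\|F\|_{r_k-\alpha_k}=\|F\|_{\rho-\delta_1}$, and placing $H_{k-1}$ in the second (polynomial) slot with shrinkage $\eta_k$ so that $\|H_{k-1}\|_{r_k-\eta_k}=\|H_{k-1}\|_{r_{k-1}}$. Arranging the schedule so that $\alpha_k$ stays bounded below by a fixed fraction of $\delta_1$ (say by forcing $r_k\geq\rho-\delta_1/2$ for all but the first few $k$, which can be lumped into one initial bracket bound) absorbs the exponential factor $(1/\alpha_k)^{C(\theta)\alpha_k^{-1/\theta}}$ into a uniform constant $(1/\delta_1)^{C_1(\theta)\delta_1^{-1/\theta}}$. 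Iterating and using the monotonicity $\|H_k\|_\rho\leq\|H_k\|_{r_k}$ (since the weighted norm of Definition \ref{083103} is decreasing in $\rho$) produces
\begin{equation*}
\|H_k\|_\rho\leq\Bigl(\prod_{j=1}^{k}\eta_j^{-1}\Bigr)\Bigl(\frac{1}{\delta_1}\Bigr)^{kC_1(\theta)\delta_1^{-1/\theta}}\|F\|_{\rho-\delta_1}^{k}\,\|H\|_{\rho-\delta_2}.
\end{equation*}

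Substituting into the Lie series yields
\begin{equation*}
\|H\circ\Phi_F\|_\rho\leq\|H\|_{\rho-\delta_2}\sum_{k\geq 0}\frac{1}{k!}\Bigl(\prod_{j=1}^{k}\eta_j^{-1}\Bigr)Q^{k},\qquad Q:=\Bigl(\frac{1}{\delta_1}\Bigr)^{C_1\delta_1^{-1/\theta}}\|F\|_{\rho-\delta_1}.
\end{equation*}
The remaining task is to schedule $\{\eta_k\}$ subject to $\sum\eta_k=\delta_2$ so that this sum becomes a convergent geometric series in $Q/\delta_2$ of ratio $\ll 1$; then (\ref{042801}) forces convergence and summing gives $(1+O(Q/\delta_2))\|H\|_{\rho-\delta_2}$, which is the stated bound after renaming constants. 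This last step is the main technical obstacle, and is not resolved by any naive schedule: the choice $\eta_k\sim\delta_2/2^k$ gives $\prod\eta_j^{-1}\sim 2^{k(k+1)/2}/\delta_2^k$ which grows far faster than $k!$, and any summable $\{\eta_k\}$ satisfies $\prod\eta_j^{-1}\geq k!/(\text{const}\,\delta_2)^k$ at best. The resolution has to exploit the special structure of the Bourgain-type monomial norm of Definition \ref{083103}---the momentum conservation (\ref{050901}), the disjoint-support constraints, and the effect of iterated brackets on degree and weight---to sharpen the telescoping estimate on $\|H_k\|_\rho$ so that the $1/k!$ from the Lie series really does dominate the combinatorial factors. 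Once this sharper estimate is established, the remainder of the proof is a routine geometric summation.
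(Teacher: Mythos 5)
Your setup is right---Lie series expansion plus iterated application of Lemma \ref{010}---but you stall at exactly the point where a standard Cauchy-estimate trick resolves the difficulty, and your proposed resolution (exploiting the fine structure of the Bourgain monomial norm) is a wrong turn. The observation you are missing is that the shrinkage schedule need not be the same for every term of the Lie series: to estimate $\|H^{(n)}\|_\rho$ (the $n$-fold bracket) you should split the budget $\delta_2$ into $n$ \emph{equal} pieces, $\eta_j=\delta_2/n$ for $j=1,\dots,n$, while keeping $F$ at weight $\rho-\delta_1$ throughout. Then $\prod_{j=1}^{n}\eta_j^{-1}=(n/\delta_2)^{n}$, and combining the $1/n!$ from the Taylor expansion with Stirling's bound $n^{n}\leq n!\,e^{n}$ gives
\begin{equation*}
\frac{1}{n!}\|H^{(n)}\|_\rho
\leq\frac{n^{n}}{n!}\left(\frac{1}{\delta_2}\left(\frac{1}{\delta_1}\right)^{C(\theta)\delta_1^{-1/\theta}}\|F\|_{\rho-\delta_1}\right)^{n}\|H\|_{\rho-\delta_2}
\leq\left(\frac{e\,Q}{\delta_2}\right)^{n}\|H\|_{\rho-\delta_2},
\end{equation*}
with $Q$ as in your notation. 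Hypothesis (\ref{042801}) makes the ratio $\ll1$; summing the geometric series and absorbing $e$ into $C_1(\theta)$ gives the claimed bound. This is precisely what the paper does.

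You were misled by treating $\{\eta_j\}$ as one fixed sequence with $\sum\eta_j=\delta_2$. As you correctly observe, any such fixed schedule forces $\prod_{j\leq k}\eta_j^{-1}\gtrsim(k/\delta_2)^{k}\sim k!/(c\delta_2)^{k}$ --- but that is exactly the rate the $1/k!$ cancels, and the cancellation is sharp precisely when the AM--GM lower bound is attained, i.e.\ for the equal split. The point is that the equal split $\eta_j=\delta_2/k$ depends on $k$: for the $k$-th Lie term you use $k$ steps of size $\delta_2/k$, and for the $(k+1)$-st you start afresh with $k+1$ steps of size $\delta_2/(k+1)$. This is the standard device by which analytic Cauchy estimates convert $k$-fold differentiation into a geometric ($e^{k}$-controlled) loss; it needs nothing beyond Lemma \ref{010} itself. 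The finer structure of Definition \ref{083103} --- momentum conservation, disjoint supports, Lemma \ref{005} --- has already been spent in proving Lemma \ref{010} and plays no further role here.
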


\begin{proof}
 Firstly,  we expand $H\circ\Phi_F$ into the Taylor series
 \begin{equation}\label{3.2}
 H\circ\Phi_F=\sum_{n\geq 0}\frac{1}{n!}H^{(n)},
 \end{equation}
where $H^{(n)}=\{H^{(n-1)},F\}$ and $H^{(0)}=H$.

We will estimate $||H^{(n)}||_\rho$ by using Lemma \ref{010} again and again:
\begin{eqnarray}
\nonumber||H^{(n)}||_\rho
\nonumber&=&||\{H^{(n-1)},F\}||_\rho\\
\nonumber&\leq&\left(\left(\frac{1}{\delta_1}\right)^{C({\theta}){\delta_1^{-\frac{1}{\theta}}}}
||F||_{\rho-\delta_1}\right)\left(\frac{n}{\delta_2}\right)||H^{(n-1)}||_{\rho-\frac{\delta_2}{n}}\\
\nonumber&\leq&\left(\left(\frac{1}{\delta_1}\right)^{C({\theta}){\delta_1^{-\frac{1}{\theta}}}}
||F||_{\rho-\delta_1}\right)^2\left(\frac{n}{\delta_2}\right)^2||H^{(n-2)}||_{\rho-\frac{2\delta_2}{n}}\\
&&\dots\nonumber\\
\label{3.3}&\leq&\left(\left(\frac{1}{\delta_1}\right)^{C({\theta}){\delta_1^{-\frac{1}{\theta}}}}
||F||_{\rho-\delta_1}\right)^n\left(\frac{n}{\delta_2}\right)^n||H||_{\rho-\delta_2}.
\end{eqnarray}
Hence in view of (\ref{3.2}), one has
\begin{eqnarray*}
||H\circ\Phi_F||_\rho
&\leq&\sum_{n\geq 0}\frac{1}{n!}\left(\left(\frac{1}{\delta_1}\right)^{C({\theta}){\delta_1^{-\frac{1}{\theta}}}}
||F||_{\rho-\delta_1}\right)^{n}\left(\frac{n}{\delta_2}\right)^n||H||_{\rho-\delta_2}\\
&=&\sum_{n\geq 0}\frac{n^n}{n!}\left(\frac1{\delta_2}\left(\frac{1}{\delta_1}\right)^{C({\theta}){\delta_1^{-\frac{1}{\theta}}}}
||F||_{\rho-\delta_1}\right)^{n}||H||_{\rho-\delta_2}\\
&\leq&\sum_{n\geq 0}\left(\frac e{\delta_2}\left(\frac{1}{\delta_1}\right)^{C({\theta}){\delta_1^{-\frac{1}{\theta}}}}
||F||_{\rho-\delta_1}\right)^{n}||H||_{\rho-\delta_2}\\
&&(\mbox{in view of $n^n<n!e^n$)}\\
&\leq & \left(1+\frac{1}{\delta_2}\left(\frac{1}{\delta_1}\right)^{C({\theta}){\delta_1^{-\frac{1}{\theta}}}}||F||_{\rho-\delta_1}\right)
||H||_{\rho-\delta_2}\\
 &&\mbox{(in view of (\ref{042801}) and $0<\delta_1\ll1$)},
\end{eqnarray*}
where $C_1(\theta)$ is a positive constant depending on $\theta$ only.
\end{proof}

\subsection{The new perturbation $R_+$ and the new normal form $N_+$}

Firstly, for $i=0,1$, one has
\begin{eqnarray}
\nonumber||{F}_i||_{\rho+\delta}
&\leq&\nonumber\frac{C_1(\theta)}{{\delta^2}}||F_i||_{\rho+\frac{\delta}2}^+\qquad\qquad\qquad\qquad\mbox{(in view of  (\ref{N7}))}\\
&\leq&\nonumber \frac{C_1(\theta)}{{\delta^2}}\cdot \frac{e^3}{\gamma}\cdot e^{{C_2({\theta)}}{\delta^{-\frac5{\theta}}}}||R_i||_{\rho}^+\qquad\mbox{(in view of  (\ref{S7}))}  \\
&\leq&\label{N9}\frac{1}{\gamma\delta^3}e^{{\delta^{-\frac6{\theta}}}}||R_i||_{\rho}^+\qquad \ \qquad\qquad\quad \mbox{(by assuming $\delta\ll 1$)}.
\end{eqnarray}
Recall the new term $R_+$ is given by (\ref{051403}), i.e. $R_+=R_2+\mathcal{R}$, where
\begin{equation*}
\mathcal{R}=\int_{0}^1\{(1-t)\{N,F\}+R_0+R_1,F\}\circ X_F^{t}dt+\int_0^1\{R_2,F\}\circ X_F^tdt.
\end{equation*}
Write
\begin{equation}
R_+=R_{0+}+R_{1+}+R_{2+},
\end{equation}
and write $\mathcal{R}$ as
\begin{eqnarray}
\nonumber\mathcal{R}
&=&\label{051510}\int_0^1\{R_0,F\}\circ X_F^tdt\\
&&\label{051511}+\int_0^1\{R_1,F\}\circ X_F^tdt\\
&&\label{051513}+\int_0^1\{R_2,F\}\circ X_F^tdt\\
&&\label{051520}+\int_0^1(1-t)\{\{N,F\},F\}\circ X_F^tdt.
\end{eqnarray}
Firstly note that the term $(\ref{051510})$ contributes to $R_{0+},R_{1+},R_{2+}$ and we get
\begin{eqnarray}
\nonumber\left|\left|\int_0^1\{R_0,F\}\circ X_F^tdt\right|\right|_{\rho+2\delta}
\nonumber&=&\left|\left|\sum_{n\geq1}\frac{1}{n!}\underbrace{\left\{\cdots\left\{R_0,{F}\right\},\cdots,{F}\right\}}_{n-\mbox{fold}}\right|\right|_{\rho+2\delta}\\
\nonumber&\leq& \frac{1}\delta\cdot\left(\frac{1}{\delta}\right)^{C_1(\theta){\delta^{-\frac{1}{\theta}}}}||{F}||_{\rho+\delta}||R_{0}||_{\rho+\delta}\ \\
&&\nonumber \mbox{(following the proof of Lemma \ref{E1})}\\
\nonumber&\leq&\frac{1}{\delta}\left(\frac{1}{\delta}\right)^{C_1(\theta){\delta^{-\frac{1}{\theta}}}}\cdot
\frac{1}{\gamma\delta^3}e^{{\delta^{-\frac6{\theta}}}}\cdot\frac{C_2(\theta)}{\delta^2}(||R_0||_{\rho}^++||R_1||_{\rho}^+)||R_0||_{\rho}^+\ \\&&\nonumber \mbox{(by (\ref{N7}) and (\ref{N9}))}\\
&\leq&\label{051203} \frac1{\gamma}\cdot e^{{\delta^{-\frac{8}{\theta}}}}||R_0||_{\rho}^+(||R_0||_{\rho}^++||R_1||_{\rho}^+),
\end{eqnarray}
and consequently

\begin{eqnarray}
\nonumber\left|\left|\int_0^1\{R_0,F\}\circ X_F^tdt\right|\right|_{\rho+3\delta}^{+}
\nonumber&\leq& \left(\frac{1}{\delta}\right)^{C(\theta){\delta^{-\frac{1}{\theta}}}}\left|\left|\int_0^1\{R_0,F\}\circ X_F^tdt\right|\right|_{\rho+2\delta}\ \ \mbox{(from (\ref{N6}))}\\
\nonumber&\leq&\left(\frac{1}{\delta}\right)^{C(\theta){\delta^{-\frac{1}{\theta}}}} \cdot\frac{1}{\gamma}\cdot e^{{\delta^{-\frac{8}{\theta}}}}||R_0||_{\rho}^+(||R_0||_{\rho}^++||R_1||_{\rho}^+)\qquad{\mbox{(from (\ref{051203}))}}\\
\label{N11}&\leq& \frac1{\gamma}\cdot e^{{\delta^{-\frac{10}{\theta}}}}||R_0||_{\rho}^+(||R_0||_{\rho}^++||R_1||_{\rho}^+).
\end{eqnarray}
Secondly, we consider the term (\ref{051511}) and write
\begin{eqnarray}
(\ref{051511})&=&\nonumber\sum_{n\geq1}\frac{1}{n!}\underbrace{\{\cdots\{R_1,{F}\},{F},\cdots,{F}\}}_{n-\mbox{fold}}\\
&=&\label{051505}\sum_{n\geq1}\frac{1}{n!}\underbrace{\{\cdots\{R_1,{F}_0\},{F},\cdots,{F}\}}_{(n-1)-\mbox{fold}}\\
&&\label{051512}+\{R_1,F_1\}\\
&&\label{051506}+\sum_{n\geq2}\frac{1}{n!}\underbrace{\{\cdots\{R_1,F_1\},{F},\cdots,{F}\}}_{(n-1)-\mbox{fold}}.
\end{eqnarray}
Note that (\ref{051505}) contributes to $R_{0+},R_{1+},R_{2+}$, (\ref{051512}) contributes to $R_{1+},R_{2+}$ and (\ref{051506}) contributes to $R_{0+},R_{1+},R_{2+}$.

Moreover, following the proof of (\ref{N11}), one has
\begin{eqnarray}
\label{N12}\left|\left|(\ref{051505})\right|\right|_{\rho+3\delta}^{+}
&\leq& \frac1{\gamma}\cdot e^{{\delta^{-\frac{10}{\theta}}}}||R_1||_{\rho}^+||R_0||_{\rho}^+(||R_0||_{\rho}^++||R_1||_{\rho}^+),\\
\left|\left|(\ref{051512})\right|\right|_{\rho+3\delta}^{+}&\leq& \frac1{\gamma}\cdot e^{{\delta^{-\frac{10}{\theta}}}}||R_1||_{\rho}^+||R_1||_{\rho}^+,\\
\label{N14}\left|\left|(\ref{051506})\right|\right|_{\rho+3\delta}^{+}
&\leq& \frac1{\gamma}\cdot e^{{\delta^{-\frac{10}{\theta}}}}||R_1||_{\rho}^+||R_1||_{\rho}^+(||R_0||_{\rho}^++||R_1||_{\rho}^+).
\end{eqnarray}
Thirdly, we consider the term (\ref{051513}) and write
\begin{eqnarray}
(\ref{051513})&=&\nonumber\sum_{n\geq1}\frac{1}{n!}\underbrace{\{\cdots\{R_2,{F}\},{F},\cdots,{F}\}}_{n-\mbox{fold}}\\
&=&\label{051514}\{R_2,F_0\}\\
&&+\label{051515}\{R_2,F_1\}\\
&&+\label{051516}\sum_{n\geq2}\frac{1}{n!}\underbrace{\{\cdots\{R_2,{F}_0\},{F},\cdots,{F}\}}_{(n-1)-\mbox{fold}}\\
&&\label{051517}+\{\{R_1,F_1\},F\}\\
&&\label{051518}+\sum_{n\geq3}\frac{1}{n!}\underbrace{\{\cdots\{R_2,F_1\},{F},\cdots,{F}\}}_{(n-1)-\mbox{fold}}.
\end{eqnarray}
Note that (\ref{051514}) contributes to $R_{1+},R_{2+}$, (\ref{051515}) contributes to $R_{2+}$, (\ref{051516}) contributes to $R_{0+},R_{1+},R_{2+}$, (\ref{051517}) contributes to $R_{1+},R_{2+}$ and (\ref{051518}) contributes to $R_{0+},R_{1+},R_{2+}$.

Similarly, one has
\begin{eqnarray}
\label{N15}||(\ref{051514})||_{\rho+3\delta}^{+}&\leq& \frac1{\gamma}\cdot e^{{\delta^{-\frac{10}{\theta}}}}||R_0||_{\rho}^+||R_2||_{\rho}^+,\\
\label{N17}||(\ref{051515})||_{\rho+3\delta}^{+}
&\leq& \frac1{\gamma}\cdot e^{{\delta^{-\frac{10}{\theta}}}}||R_2||_{\rho}^+||R_1||_{\rho}^+,\\
\label{N16}\left|\left|(\ref{051516})\right|\right|_{\rho+3\delta}^{+}
&\leq& \frac1{\gamma}\cdot e^{{\delta^{-\frac{10}{\theta}}}}||R_0||_{\rho}^+||R_2||_{\rho}^+(||R_0||_{\rho}^++||R_1||_{\rho}^+),\\
\label{N18}\left|\left|(\ref{051517})\right|\right|_{\rho+3\delta}^{+}
&\leq& \frac{1}{\gamma}\cdot e^{{\delta^{-\frac{10}{\theta}}}}||R_2||_{\rho}^+||R_1||_{\rho}^+(||R_0||_{\rho}^++||R_1||_{\rho}^+),\\
\label{N19}\left|\left|(\ref{051518})\right|\right|_{\rho+3\delta}^{+}
&\leq& \frac1{\gamma}\cdot e^{{\delta^{-\frac{10}{\theta}}}}||R_2||_{\rho}^+||R_1||_{\rho}^+(||R_0||_{\rho}^++||R_1||_{\rho}^+)^2.
\end{eqnarray}
Finally, we consider the term (\ref{051520}) and
write
\begin{eqnarray}
\nonumber(\ref{051520})&=&\sum_{n\geq2}\frac{1}{n!}\underbrace{\{\cdots\{N,{F}\},{F},\cdots,{F}\}}_{n-\mbox{fold}}\\
&=&\nonumber\sum_{n\geq2}\frac{1}{n!}\underbrace{\{\cdots\{-R_0-R_1+[R_0]+[R_1]\},{F},\cdots,{F}\}}_{(n-1)-\mbox{fold}}\qquad \\
&&\nonumber\mbox{(in view of (\ref{4.27}))}\\
&=&\label{051530}\sum_{n\geq2}\frac{1}{n!}\underbrace{\{\cdots\{-R_0+[R_0],{F}\},{F},\cdots,{F}\}}_{(n-1)-\mbox{fold}}\\
&&+\label{051531}\sum_{n\geq2}\frac{1}{n!}\underbrace{\{\cdots\{-R_1+[R_1],{F}_0\},{F},\cdots,{F}\}}_{(n-2)-\mbox{fold}}\\
&&\label{051532}+\{-R_1+[R_1],F_1\}\\
&&\label{051533}+\sum_{n\geq3}\frac{1}{n!}\underbrace{\{\cdots\{-R_1+[R_1],F_1\},{F},\cdots,{F}\}}_{(n-1)-\mbox{fold}}.
\end{eqnarray}

Note that (\ref{051530}) contributes to $R_{0+},R_{1+},R_{2+}$, (\ref{051531}) contributes to $R_{0+},R_{1+},R_{2+}$, (\ref{051532}) contributes to $R_{1+},R_{2+}$ and (\ref{051533}) contributes to $R_{0+},R_{1+},R_{2+}$.

Moreover, one has
\begin{eqnarray}
\left|\left|(\ref{051530})\right|\right|_{\rho+3\delta}^{+}&\leq&\frac1{\gamma}\cdot e^{{\delta^{-\frac{10}{\theta}}}}||R_0||_{\rho}^+(||R_0||_{\rho}^++||R_1||_{\rho}^+),\\
\label{N12'}\left|\left|(\ref{051531})\right|\right|_{\rho+3\delta}^{+}
&\leq& \frac1{\gamma}\cdot e^{{\delta^{-\frac{10}{\theta}}}}||R_1||_{\rho}^+||R_0||_{\rho}^+(||R_0||_{\rho}^++||R_1||_{\rho}^+),\\
\left|\left|(\ref{051532})\right|\right|_{\rho+3\delta}^{+}&\leq& \frac1{\gamma}\cdot e^{{\delta^{-\frac{10}{\theta}}}}||R_1||_{\rho}^+||R_1||_{\rho}^+,\\
\label{N14}\left|\left|(\ref{051533})\right|\right|_{\rho+3\delta}^{+}
&\leq& \frac1{\gamma}\cdot e^{{\delta^{-\frac{10}{\theta}}}}||R_1||_{\rho}^+||R_1||_{\rho}^+(||R_0||_{\rho}^++||R_1||_{\rho}^+).
\end{eqnarray}
Consequently
\begin{eqnarray}
\label{N20}||R_{0+}||_{\rho+3\delta}^{+}&\leq& \frac1{\gamma}\cdot e^{{\delta^{-\frac{10}{\theta}}}}(||R_0||_{\rho}^++||R_1||_{\rho}^+)(||R_0||_{\rho}^++{||R_1||_{\rho}^+}^2),\\
\label{N21}||R_{1+}||_{\rho+3\delta}^{+}&\leq& \frac1{\gamma}\cdot e^{{\delta^{-\frac{10}{\theta}}}}(||R_0||_{\rho}^++{||R_1||_{\rho}^+}^2),\\
\label{N22}||R_{2+}||_{\rho+3\delta}^{+}&\leq& ||R_2||_{\rho}^++\frac1{\gamma}\cdot e^{{\delta^{-\frac{10}{\theta}}}}(||R_0||_{\rho}^++||R_1||_{\rho}^+).
\end{eqnarray}

The new normal form $N_+$ is given in (\ref{051402}). Note that $[R_0]$ (in view of (\ref{051501})) is a constant which does not affect the Hamiltonian vector field. Moreover, in view of (\ref{051502}), we denote by
\begin{equation}
\omega_{n+}=n^2+\widetilde V_n+\sum_aB_{a00}^{(n)}\mathcal{M}_{a00},
\end{equation}
where the terms $\sum_aB_{a00}^{(n)}\mathcal{M}_{a00}$ is the so-called frequency shift. The estimate of $|\sum_aB_{a00}^{(n)}\mathcal{M}_{a00}|$ will be given in the next section (see (\ref{M15}) for the details).

\section{Iteration and Convergence}

Now we give the precise
set-up of iteration parameters. Let $s\geq1$ be the $s$-th KAM
step.
 \begin{itemize}
 \item[]$\delta_{s}=\frac{\rho}{s^2}$,

 \item[]$\rho_{s+1}=\rho_{s}+3\delta_s$,

 \item[]$\epsilon_s=\epsilon_{0}^{(\frac{3}{2})^s}$, which dominates the size of
 the perturbation,

 \item[]$\lambda_s=e^{-C(\theta)(\ln{\frac{1}{\epsilon_{s+1}}})^{\frac{4}{\theta+4}}}$,

 \item[]$\eta_{s+1}=\frac{1}{20}\lambda_s\eta_s$,

 \item[]$d_0=0,\,d_{s+1}=d_s+\frac{1}{\pi^2(s+1)^2}$,

 \item[]$D_s=\{(q_n)_{n\in\mathbb{Z}}:\frac{1}{2}+d_s\leq|q_n|e^{r|n|^{\theta}}\leq1-d_s\}$.
 \end{itemize}

Denote the complex cube of size $\lambda>0$:
\begin{equation}\label{M9}
\mathcal{C}_{\lambda}(\widetilde{V})=\{(V_n)_n\in\prod_{n\in\mathbb{Z}}\mathbb{C}:|V_n-\widetilde{V}_n|\leq \lambda\}.
\end{equation}

\begin{lemma}{\label{IL}}
Suppose $H_{s}=N_{s}+R_{s}$ is real analytic on $D_{s}\times\mathcal{C}_{\eta_{s}}(V_{s})$,
where $$N_{s}=\sum_{n\in\mathbb{Z}}(n^2+\widetilde V_{n,s})|q_n|^2$$ is a normal form with coefficients satisfying
\begin{eqnarray}
\label{198}&&\widetilde{V}_{s}(V_{s})=\omega,\\
\label{199}&&\left|\left|\frac{\partial \widetilde{V}_s}{{\partial V}}-I\right|\right|_{l^{\infty}\rightarrow l^{\infty}}<d_s\epsilon_{0}^{\frac{1}{10}},
\end{eqnarray}
and $R_{s}=R_{0,s}+R_{1,s}+R_{2,s}$ satisfying
\begin{eqnarray}
\label{200}&&||R_{0,s}||_{\rho_{s}}^{+}\leq \epsilon_{s},\\
\label{201}&&||R_{1,s}||_{\rho_{s}}^{+}\leq \epsilon_{s}^{0.6},\\
\label{202}&&||R_{2,s}||_{\rho_{s}}^{+}\leq (1+d_s)\epsilon_0.
\end{eqnarray}
Then for all $V\in\mathcal{C}_{\eta_{s}}(V_{s})$ satisfying $\widetilde V_{s}(V)\in\mathcal{C}_{\lambda_s}(\omega)$, there exist real analytic symplectic coordinate transformations
$\Phi_{s+1}:D_{s+1}\rightarrow D_{s}$ satisfying
\begin{eqnarray}
\label{203}&&||\Phi_{s+1}-id||_{r,\infty}\leq \epsilon_{s}^{0.5},\\
\label{204}&&||D\Phi_{s+1}-I||_{(r,\infty)\rightarrow(r,\infty)}\leq \epsilon_{s}^{0.5},
\end{eqnarray}
such that for
$H_{s+1}=H_{s}\circ\Phi_{s+1}=N_{s+1}+R_{s+1}$, the same assumptions as above are satisfied with `$s+1$' in place of `$s$', where $\mathcal{C}_{\eta_{s+1}}(V_{s+1})\subset\widetilde V_{s}^{-1}(\mathcal{C}_{\lambda_s}(\omega))$ and
\begin{equation}\label{206}
||\widetilde{V}_{s+1}-\widetilde{V}_{s}||_{\infty}\leq\epsilon_{s}^{0.5},
\end{equation}
\begin{equation}\label{205}
||V_{s+1}-V_{s}||_{\infty}\leq2\epsilon_{s}^{0.5}.
\end{equation}
\end{lemma}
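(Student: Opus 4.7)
The plan is to execute one Newton-type KAM step by (i) solving the homological equation (\ref{4.27}) via Lemma \ref{S6}, (ii) building $\Phi_{s+1}=X_F^1$ and controlling it via Lemma \ref{E1}, (iii) plugging the resulting estimates into the decomposition (\ref{N20})--(\ref{N22}), and (iv) updating the frequency map $\widetilde V_{s+1}$ by the implicit function theorem. First, for every $V\in\mathcal{C}_{\eta_s}(V_s)$ with $\widetilde V_s(V)\in\mathcal{C}_{\lambda_s}(\omega)$, the Diophantine property (\ref{S1}) of the target $\omega$ together with the smallness of $\lambda_s$ transfers to $\widetilde V_s(V)$ with parameter $\gamma/2$, so Lemma \ref{S6} applied at scales $\rho=\rho_s$, $\delta=\delta_s$ yields $F=F_0+F_1$ with
\[
\|F_i\|_{\rho_s+\delta_s}^{+}\leq \frac{2}{\gamma}\,e^{C(\theta)\delta_s^{-5/\theta}}\|R_{i,s}\|_{\rho_s}^{+},\qquad i=0,1.
\]
Since $\delta_s=\rho/s^2$ grows only polynomially while $\epsilon_s^{-1}$ grows super-exponentially, the smallness hypothesis (\ref{042801}) of Lemma \ref{E1} is satisfied, and the corresponding transformation $\Phi_{s+1}$ obeys (\ref{203})--(\ref{204}). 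The gap between $D_{s+1}$ and $D_s$ is $1/(\pi^2(s+1)^2)$, vastly larger than $\epsilon_s^{0.5}$, so $\Phi_{s+1}(D_{s+1})\subset D_s$.

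Next, feed the bounds (\ref{200})--(\ref{202}) into the contributions (\ref{051510})--(\ref{051520}) collected in (\ref{N20})--(\ref{N22}). Since $\|R_{0,s}\|^{+}\le\epsilon_s$ and $\|R_{1,s}\|^{+}\le\epsilon_s^{0.6}$, the dominant cross term is $\|R_{0,s}\|^{+}\|R_{1,s}\|^{+}\lesssim \epsilon_s^{1.6}$, which is still smaller than $\epsilon_{s+1}=\epsilon_s^{3/2}$ after absorbing the prefactor $\gamma^{-1}e^{\delta_s^{-10/\theta}}$ into a factor $\epsilon_s^{-0.1}$ (legitimate for $\epsilon_0$ small). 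This yields $\|R_{0,s+1}\|_{\rho_{s+1}}^{+}\le \epsilon_{s+1}$, $\|R_{1,s+1}\|_{\rho_{s+1}}^{+}\le \epsilon_{s+1}^{0.6}$, while for $R_{2,s+1}$ the bound (\ref{N22}) gives
\[
\|R_{2,s+1}\|_{\rho_{s+1}}^{+}\le (1+d_s)\epsilon_0+\tfrac{1}{\gamma}e^{\delta_s^{-10/\theta}}\bigl(\epsilon_s+\epsilon_s^{0.6}\bigr)\le (1+d_{s+1})\epsilon_0,
\]
using $d_{s+1}-d_s=1/(\pi^2(s+1)^2)$ and the super-exponential smallness of $\epsilon_s^{0.6}$.

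For the normal form, $N_{s+1}=N_s+[R_{0,s}]+[R_{1,s}]$ produces a frequency shift $\widetilde V_{n,s+1}-\widetilde V_{n,s}=\sum_a B_{a00}^{(n)}\mathcal{M}_{a00}$, which is bounded by $\|R_{1,s}\|^{+}_{\rho_s}\le\epsilon_s^{0.6}$ after using the decay factor $|I_n(0)|\le e^{-2r|n|^\theta}$ to absorb the exponential weight in Definition \ref{083103}; this gives (\ref{206}). Differentiating in $V$ produces a correction to $\partial\widetilde V/\partial V - I$ bounded by $\epsilon_s^{0.5}$, which is summable and preserves (\ref{199}) with constant $d_{s+1}\epsilon_0^{1/10}$. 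Hence $\partial\widetilde V_{s+1}/\partial V$ is invertible, and the implicit function theorem produces $V_{s+1}$ close to $V_s$ with $\widetilde V_{s+1}(V_{s+1})=\omega$ and $\|V_{s+1}-V_s\|_\infty\le 2\epsilon_s^{0.5}$, yielding (\ref{205}). The inclusion $\mathcal{C}_{\eta_{s+1}}(V_{s+1})\subset\widetilde V_s^{-1}(\mathcal{C}_{\lambda_s}(\omega))$ follows from $\eta_{s+1}=\lambda_s\eta_s/20$ and the near-identity nature of $\partial\widetilde V_s/\partial V$.

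The main obstacle is the mismatch between the natural KAM quadratic gain and the non-quadratic smallness of $R_1$: the Poisson brackets of $R_0$ with $F_1$ (and of $R_1$ with $F_0$) only yield terms of order $\epsilon_s\cdot\epsilon_s^{0.6}=\epsilon_s^{1.6}$, which is just barely smaller than the target $\epsilon_{s+1}=\epsilon_s^{3/2}$ and forces the careful budgeting of the exponent $0.6$. A second delicate point is that $R_2$ is \emph{not} expected to shrink: the best one can hope for is the additive growth $d_{s+1}-d_s=O(1/s^2)$, which is why the telescoping constant $1+d_s$ is baked into (\ref{202}). Finally, the exponential prefactor $e^{\delta_s^{-10/\theta}}$ coming from the small-divisor estimate has to be absorbed by $\epsilon_s^{0.1}$, which is the quantitative reason behind the scaling $\lambda_s=e^{-C(\theta)(\ln\epsilon_{s+1}^{-1})^{4/(\theta+4)}}$ chosen in the iteration parameters.
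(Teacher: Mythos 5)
Your outline captures the high-level Newton step, but it misses the two ingredients that carry the real weight in the paper's proof, and one of the lemmas you invoke does not do what you claim.

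First, the claim that the Diophantine property ``transfers'' from $\omega$ to $\widetilde V_s(V)$ for $V$ in the small cube $\mathcal{C}_{\lambda_s}(\omega)$ is not automatic and is in fact the delicate point. The Diophantine inequality (\ref{S1}) involves a product $\gamma\prod_n(1+l_n^2|n|^4)^{-1}$ over \emph{all} integers, which can be arbitrarily small, and a perturbation of an infinite vector $\widetilde V$ changes $\sum_n(k_n-k'_n)\widetilde V_n$ by an amount that is \emph{a priori} unbounded relative to that right-hand side. The paper handles this by a truncation argument: at step $s$ one only needs to eliminate monomials with $\sum_{i\ge3}|n_i|^\theta\le \tfrac{s^2}{(2-2^\theta)\rho}\ln\epsilon_{s+1}^{-1}$ (the rest are already small enough because of the weight factor), which via Lemma \ref{a1} bounds $\sum|k_n-k'_n||n|^{\theta/2}$ by a quantity $B$, so only modes $|n|\lesssim B^{2/\theta}$ enter the small divisor. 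One then needs the explicit lower bound (\ref{M6})--(\ref{M7}) on $\prod_{|n|\le N_*}(1+(k_n-k'_n)^2|n|^4)^{-1}$, and it is precisely this lower bound that dictates the scaling $\lambda_s = e^{-C(\theta)(\ln\epsilon_{s+1}^{-1})^{4/(4+\theta)}}$. Without this you cannot justify solving the homological equation on the whole cube $\mathcal{C}_{\lambda_s}(\omega)$, nor the analyticity in $V$ that the later Cauchy estimates rely on. Your proof skips this entirely.

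Second, Lemma \ref{E1} controls $\|H\circ\Phi_F\|_\rho$, the Hamiltonian norm of a composition; it says nothing about $\|\Phi_{s+1}-\mathrm{id}\|_{r,\infty}$ or $\|D\Phi_{s+1}-I\|$. The paper proves a separate technical result, Lemma \ref{063004}, which converts a Hamiltonian-norm bound $\|H\|_\rho$ into a sup-norm bound on the vector field, $\sup_{\|q\|_{r,\infty}<1}\|X_H\|_{r,\infty}\le C(\rho,\theta)\|H\|_\rho$; estimates (\ref{203})--(\ref{204}) come from applying this to $F_s$ (see (\ref{Liu4})--(\ref{Liu6})), and the same lemma is what makes the final normal form dynamically meaningful. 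Citing Lemma \ref{E1} here is a genuine error of logic, not just an omission. Your treatment of the estimates (\ref{N20})--(\ref{N22}), the frequency shift, and the inverse function theorem is essentially in line with the paper, so these two points are the gap you need to fill.
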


\begin{proof}
In the step $s\rightarrow s+1$, there is saving of a factor
\begin{equation*}
e^{-\delta_{s}\left(\sum_{n}(2a_n+k_n+k'_n)|n|^{\theta}-2|n_1^*|^{\theta}\right)}\leq e^{-(2-2^\theta)\delta_{s}\sum_{i\geq3}|n_i|^{\theta}}.
\end{equation*}
Recalling after this step, we need
\begin{eqnarray*}
&&||R_{0,s+1}||_{\rho_{s+1}}^{+}\leq \epsilon_{s+1},\\
&&||R_{1,s+1}||_{\rho_{s+1}}^{+}\leq \epsilon_{s+1}^{0.6}.
\end{eqnarray*}
Consequently, in $R_{i,s}\ (i=0,1)$, it suffices to eliminate the nonresonant monomials $\mathcal{M}_{akk'}$ for which
\begin{equation*}
e^{-(2-2^\theta)\delta_{s}\sum_{i\geq3}|n_i|^{\theta}}\geq\epsilon_{s+1},
\end{equation*}
that is
\begin{equation}\label{M1}
\sum_{i\geq3}|n_i|^{\theta}\leq\frac{s^2}{(2-2^\theta)\rho}\ln\frac{1}{\epsilon_{s+1}}.
\end{equation}
On the other hand, in the small divisors analysis, we need only impose Diophantine conditions when (\ref{041803}) holds, which implies
\begin{eqnarray}\label{M2}
\nonumber\sum_{n\in\mathbb{Z}}|k_n-k_n'||n|^{\theta/2}
\nonumber&\leq& 3\cdot 6^{\theta/2}\sum_{i\geq3}|n_i|^{\theta}\ \  \mbox{(by Lemma \ref{a1})}\\
&\leq&\frac{3\cdot 6^{\theta/2}\cdot s^2}{(2-2^\theta)\rho} \ln\frac{1}{\epsilon_{s+1}}\ \  \mbox{(in view of  (\ref{M1}))}\\
&:=& B\nonumber.
\end{eqnarray}
From (\ref{M2}), we need only impose condition on $(\widetilde{V}_n)_{|n|\leq N_{*}}$, where
\begin{equation}\label{M3}
N_{*}\sim B^{2/\theta}.
\end{equation}
Correspondingly, the Diophantine condition becomes
\begin{equation}\label{M4}
\left|\left|\sum_{|n|\leq N_{*}}(k_n-k'_n)\widetilde{V}_{n,s}\right|\right|\geq \gamma\prod_{|n|\leq N_{*}}\frac{1}{1+(k_n-k'_n)^2|n|^4}.
\end{equation}

We finished the truncation step. Next we will show (\ref{M4})  preserves under small perturbation of $(\widetilde{V}_n)_{|n|\leq N_{*}}$ and this is equivalent to get lower bound on the right hand side of (\ref{M4}). More concretely, for large $N$ (will be specified latter), we have
\begin{eqnarray}
\nonumber\prod_{|n|\leq N_{*}}\frac{1}{1+(k_n-k'_n)^2|n|^4}
\nonumber&=&e^{\sum_{|n|\leq N}\ln\left(\frac{1}{1+(k_n-k'_n)^2|n|^4}\right)+\sum_{|n|> N}\ln\left(\frac{1}{1+(k_n-k'_n)^2|n|^4}\right)}\\
\nonumber&\geq& e^{-C(\theta)\sum_{|n|\leq N,k_n\neq k'_n}\ln\left(|k_n-k'_n||n|^{\frac\theta2}\right)-\sum\limits_{|n|> N,k_n\neq k'_n}4\left(|k_n-k'_n|\cdot\ln|n|\right)}\\
\nonumber&\geq& e^{-C(\theta)N\ln B-4\sum_{|n|> N,k_n\neq k'_n}\left(|k_n-k'_n||n|^{\frac{\theta}{2}}(|n|^{-\frac{\theta}{2}}\ln|n|)\right)}\\
&&\nonumber \mbox{(in view of (\ref{M2}))}\\
\nonumber&\geq& e^{-C(\theta)N\ln B-C(\theta)(N^{-\frac{\theta}{2}}\ln N)B}\ \ \mbox{(again from (\ref{M2}))}\\
\label {M6}&\geq& e^{-C(\theta)N\ln B-C(\theta)N^{-\frac{\theta}{2.5}}B}.
\end{eqnarray}
Let $$N\sim \left(\frac{B}{\ln B}\right)^{\frac{2.5}{\theta+2.5}},$$
 then by (\ref{M6}), one has
\begin{eqnarray}
\nonumber \prod_{|n|\leq N_{*}}\frac{1}{1+(k_n-k'_n)^2|n|^4}
\nonumber &\geq& e^{-C(\theta)B^{\frac{2.5}{\theta+2.5}}(\ln B)^{\frac{-\theta}{\theta+2.5}}}\\
\nonumber&\geq& e^{-C(\theta)B^{\frac{3}{\theta+3}}}\\
\nonumber&\geq& e^{-C(\theta)s^{\frac{6}{3+\theta}} (\ln{\frac{1}{\epsilon_{s+1}}})^{\frac{3}{\theta+3}}}\\
\label{M7}&>&e^{-C(\theta)(\ln{\frac{1}{\epsilon_{s+1}}})^{\frac{4}{\theta+4}}}\ \  \mbox{(for $\epsilon_0$ sufficiently small)}.
\end{eqnarray}
Assuming $V\in \mathcal{C}_{\lambda_s}(\widetilde{V}_s)$, from the lower bound (\ref{M7}), the relation (\ref{M4}) remains true if we substitute $V$ for $\widetilde{V}_s$. Moreover, there is analyticity on $\mathcal{C}_{\lambda_s}(\widetilde{V}_s)$. The transformations $\Phi_{s+1}$ is obtained as the time-1 map $X_{F_s}^{t}|_{t=1}$ of the Hamiltonian
vector field $X_{F_s}$ with $F_s=F_{0,s}+F_{1,s}$. Taking $\rho=\rho_s$, $\delta=\delta_s$ in Lemma \ref{S6}, we get
\begin{eqnarray}\label{Liu1}
||F_{i,s}||_{\rho_s+\delta_s}^{+}\leq \frac{1}{\gamma}\cdot e^{C(\theta)\delta_s^{-\frac5\theta}}||R_{i,s}||_{\rho_s}^{+},
\end{eqnarray}
where $i=0,1$. By Lemma \ref{051301}, we get
\begin{equation}\label{Liu2}
||F_{i,s}||_{\rho_s+2\delta_s}\leq\frac{C(\theta)}{\delta_s^2}||F_{i,s}||_{\rho_s+\delta_s}^{+}.
\end{equation}
Combining (\ref{200}), (\ref{201}), (\ref{Liu1}) and (\ref{Liu2}), we get
\begin{equation}\label{Liu3}
||F_{s}||_{\rho_s+2\delta_s}\leq\frac{C(\theta)}{\gamma\delta_s^2}e^{C(\theta)\delta_s^{-\frac5\theta}}(\epsilon_{s}+\epsilon_{s}^{0.6}).
\end{equation}To estimate the norm of $X_{F_s}^1=\Phi_{s+1}$, we will prove a technical lemma which establishes a connection between the norm $||\cdot||_{\rho}$ and the sup-norm $||\cdot||_{\rho,\infty}$ firstly.

\begin{lemma}\label{063004}
Given a Hamiltonian
\begin{equation}\label{050902}
H=\sum_{a,k,k'}B_{akk'}\mathcal{M}_{akk'},
\end{equation}
then for any $r>7\rho$, one has
\begin{equation}\label{050907}
\sup_{||q||_{r,\infty}<1}||X_H||_{r,\infty}\leq C(\rho,\theta)||H||_{\rho},
\end{equation}where $C(\rho,\theta)$ is a positive constant depending on $\rho$ and $\theta$ only. \color{black}\end{lemma}
\begin{proof}
In view of the definition (\ref{042501}), it suffices to estimate the upper bound for
\begin{equation*}
\left|\frac{\partial{H}}{\partial q_j}e^{r|j|^{\theta}}\right|
\end{equation*}for each $j\in\mathbb{Z}$. In view of (\ref{050902}),
one has
\begin{equation*}
\frac{\partial{H}}{\partial q_j}=\sum_{a,k,k'}B_{akk'}\left(\prod_{n\neq j}I_n(0)^{a_n}q_n^{k_n}\bar{q}_n^{k_n'}\right)\left(k_jI_j(0)^{a_j}q_j^{k_j-1}\bar{q}_j^{k_j'}\right)
\end{equation*}
and moreover
\begin{equation}\label{042503}
\left|\frac{\partial{H}}{\partial q_j}\cdot e^{r|j|^{\theta}}\right|=\left|e^{r|j|^{\theta}}\sum_{a,k,k'}B_{akk'}\left(\prod_{n\neq j}I_n(0)^{a_n}q_n^{k_n}\bar{q}_n^{k_n'}\right)\left(k_jI_j(0)^{a_j}q_j^{k_j-1}\bar{q}_j^{k_j'}\right)\right|.
\end{equation}
Based on (\ref{042602}), one has
\begin{equation}\label{050903}
|B_{akk'}|\leq ||H||_{\rho}e^{\rho(\sum_{n}(2a_n+k_n+k_n')|n|^{\theta}-2(n_1^*)^{\theta})}.
\end{equation}
Therefore, in view of $||q||_{r,\infty}<1$ and (\ref{050903}), one has
\begin{eqnarray}
|(\ref{042503})|\nonumber
&\leq&\nonumber||H||_{\rho}\left|e^{r|j|^{\theta}}\sum_{a,k,k'}k_je^{\rho(\sum_{n}(2a_n+k_n+k_n')|n|^{\theta}-2(n_1^*)^{\theta})}e^{-r(\sum_{n}(2a_n+k_n+k_n')|n|^{\theta}-|j|^{\theta})}\right|\\
&=&\nonumber||H||_{\rho}\left|\sum_{a,k,k'}k_je^{\rho(\sum_{n}(2a_n+k_n+k_n')|n|^{\theta}-2(n_1^*)^{\theta})}e^{-r(\sum_{n}(2a_n+k_n+k_n')|n|^{\theta}-2|j|^{\theta})}\right|.
\end{eqnarray}
Now we will estimate the last inequality in the following two cases:

\textbf{Case 1.} $|j|\leq n_3^*$.

Then one has
\begin{eqnarray*}
&&\left|\sum_{a,k,k'}k_je^{\rho(\sum_{n}(2a_n+k_n+k_n')|n|^{\theta}-2(n_1^*)^{\theta})}e^{-r(\sum_{n}(2a_n+k_n+k_n')|n|^{\theta}-2|j|^{\theta})}\right|\\
&\leq& \left|\sum_{a,k,k'}k_je^{\rho\sum_{i\geq 1}(n_i^*)^{\theta}}e^{-r(n_1^*)^{\theta}-r\sum_{i\geq 4}(n_i^*)^{\theta}}\right|\quad \mbox{(in view of $|j|\leq n_3^*$)}\\
&\leq&  \sum_{a,k,k'}k_je^{\frac13(-r+3\rho)\sum_{i\geq 1}(n_i^*)^{\theta}}.
\end{eqnarray*}
Note that
\begin{equation*}
\sum_{i\geq 1}(n_i^*)^{\theta}=\sum_n(2a_n+k_n+k_n')|n|^{\theta}\geq k_j.
\end{equation*}
Hence,
\begin{eqnarray}
\nonumber\sum_{a,k,k'}k_je^{\frac13(-r+3\rho)\sum_{i\geq 1}(n_i^*)^{\theta}}
\nonumber&\leq&\sum_{a,k,k'}\left(\sum_{i\geq 1}(n_i^*)^{\theta}\right)e^{\frac13(-r+3\rho)\sum_{i\geq 1}(n_i^*)^{\theta}}\\
\nonumber&\leq&\frac{12}{r-3\rho}\sum_{a,k,k'}e^{\frac14(-r+3\rho)\sum_{i\geq 1}(n_i^*)^{\theta}}\quad\mbox{(in view of (\ref{042608}))}\\
\nonumber&=&\frac{12}{r-3\rho}\sum_{a,k,k'}
e^{\frac14(-r+3\rho)\sum_{n}(2a_n+k_n+k'_n)|n|^{\theta}}\\
\nonumber&\leq&\frac{12}{r-3\rho}\left(\sum_{a}
e^{\frac14(-r+3\rho)\sum_{n}2a_n|n|^{\theta}}\right)
\left(\sum_{k}e^{\frac14(-r+3\rho)\sum_{n}k_n|n|^{\theta}}\right)^2
\\
\nonumber&\leq&\frac{12}{r-3\rho}\prod_{n\in\mathbb{Z}}\left({1-e^{\frac12(-r+3\rho)|n|^{\theta}}}\right)^{-1}\prod_{n\in\mathbb{Z}}
\left({1-e^{\frac14(-r+3\rho)|n|^{\theta}}}\right)^{-2}\\
\nonumber&&\mbox{(in view of (\ref{041809}))}\\
\nonumber&\leq&\frac{12}{\rho}\prod_{n\in\mathbb{Z}}\left({1-e^{-\rho|n|^{\theta}}}\right)^{-1}\prod_{n\in\mathbb{Z}}
\left({1-e^{-\rho|n|^{\theta}}}\right)^{-2}\\
\nonumber&&  \ {\mbox{(in view of $r>7\rho$)}}    \\
&\leq&\label{050904}\frac{12}{\rho}\left(\frac{1}{\rho}\right)^{C_1{(\theta)}{\rho^{-\frac{1}{\theta}}}},
\end{eqnarray}
where the last inequality is based on (\ref{0418010}) and $C_1(\theta)$ is a positive constant depending on $\theta$ only.

\textbf{Case 2.} $|j|> n_3^*$, which implies $k_j\leq 2$.

Then one has
\begin{eqnarray}
\nonumber&&\left|\sum_{a,k,k'}k_je^{\rho(\sum_{n}(2a_n+k_n+k_n')|n|^{\theta}-2(n_1^*)^{\theta})}e^{-r(\sum_{n}(2a_n+k_n+k_n')|n|^{\theta}-2|j|^{\theta})}\right|\\
\nonumber&\leq&2 \left|\sum_{a,k,k'}e^{\rho\sum_{i\geq 3}(n_i^*)^{\theta}}e^{-r\sum_{i\geq 3}(n_i^*)^{\theta}}\right|\quad{(\mbox{in view of (\ref{001}) and $k_j\leq2$})}\\
\label{201606032}&=& 2\left|\sum_{a,k,k'}e^{(-r+\rho)\sum_{i\geq 3}(n_i^*)^{\theta}}\right|.
\end{eqnarray}
If $\{n_i\}_{i\geq 1}$ is given, then $\{2a_n+k_n+k'_n\}_{n\in\mathbb{Z}}$ is specified, and hence $(a,k,k')$ is specified up to a factor of
$$\prod_{n}(1+l_n^2),$$
where
$$l_n=\#\{j:n_j=n\}.$$
Since $|j|>n_3^*$, then $j\in\{n_1,n_2\}$. Hence, if $(n_i)_{i\geq 3}$ and $j$ are given, then $n_1$ and $n_2$ are uniquely determined. Then, one has
\begin{eqnarray}\label{050905}
\nonumber(\ref{201606032})&\leq&\nonumber 2\left|\sum_{(n_i)_{i\geq3}}\prod_{|n|\leq n_1^*}(1+l_n^2)e^{(-r+\rho)\sum_{i\geq 3}(n_i^*)^{\theta}}\right| \\
\nonumber&\leq&10\left|\left(\sum_{(n_i)_{i\geq3}}e^{-5\rho\sum_{i\geq 3}(n_i^*)^{\theta}}\right)\sup_{(n_i)_{i\geq3}}\left(\prod_{|n|\leq n_1^*}(1+l_n^2)e^{-\rho\sum_{i\geq 3}(n_i^*)^{\theta}}\right)\right|\\
\nonumber&\leq&10\left(\frac{1}{\rho}\right)^{C_2{(\theta)}{\rho^{-\frac{1}{\theta}}}}\sum_{(n_i)_{i\geq3}}e^{-5\rho\sum_{i\geq 3}(n_i^*)^{\theta}}\ \ \mbox{(in view of (\ref{201606031}))}\\
\nonumber&=&10\left(\frac{1}{\rho}\right)^{C_2{(\theta)}{\rho^{-\frac{1}{\theta}}}}\sum_{(l_n)_{|n|\leq n_3^*}}e^{-5\rho\sum_{|n|\leq n_3^*}l_n|n|^{\theta}}\\
\nonumber&\leq&10\left(\frac{1}{\rho}\right)^{C_2{(\theta)}{\rho^{-\frac{1}{\theta}}}}\prod_{n\in\mathbb{Z}}\left(1-e^{-5\rho|n|^{\theta}}\right)^{-1}\ \ \mbox{(in view of (\ref{041809}))}\\
\nonumber&\leq&\left(\frac{1}{\rho}\right)^{C_3{(\theta)}{\rho^{-\frac{1}{\theta}}}}\ \ \qquad\qquad\qquad\qquad\qquad\mbox{(in view of (\ref{0418010}))},
\end{eqnarray}
where $C_2(\theta),C_{3}(\theta)$ are positive constants depending on $\theta$ only.

 Hence, we finished the proof of (\ref{050907}).
\end{proof}
By Lemma \ref{063004}, we get
\begin{eqnarray}\label{Liu4}
\sup_{||q||_{r,\infty}<1}||X_{F_s}||_{r,\infty}\nonumber
&\leq&C(\rho,\theta)||F_{s}||_{\rho_s+2\delta_s}\nonumber\\
&\leq&\frac{C(\rho,\theta)}{\gamma\delta_s^2}e^{C(\theta)\delta_s^{-\frac5\theta}}(\epsilon_{s}+\epsilon_{s}^{0.6})\nonumber\\
&\leq&\epsilon_{s}^{0.55},
\end{eqnarray}
where noting that $0<\epsilon_0\ll1$ small enough and depending on $\rho,\theta$ only.

Since $\epsilon_{s}^{0.55}\ll\frac{1}{\pi^2(s+1)^2}=d_{s+1}-d_s$, we have $\Phi_{s+1}:D_{s+1}\rightarrow D_{s}$ with
\begin{equation}\label{Liu5}
\|\Phi_{s+1}-id\|_{r,\infty}\leq\sup_{q\in D_s}\|X_{F_s}\|_{r,\infty}\leq\epsilon_{s}^{0.55}<\epsilon_{s}^{0.5},
\end{equation}
which is the estimate (\ref{203}). Moreover, from (\ref{Liu4}) we get
\begin{equation}\label{Liu6}
\sup_{q\in D_s}||DX_{F_s}-I||_{r,\infty}\leq\frac{1}{d_s}\epsilon_{s}^{0.55}\ll\epsilon_{s}^{0.5},
\end{equation}
and thus the estimate (\ref{204}) follows.

Moreover, under the assumptions (\ref{200})-(\ref{202}) at stage $s$, we get from (\ref{N20}), (\ref{N21}) and (\ref{N22}) that
\begin{eqnarray*}
||R_{0,s+1}||_{\rho_{s+1}}^{+}
&\leq& e^{\frac{3s^{\frac{4}{\theta}+6}}{\tau^{\frac{2}{\theta}+3}}}
\left(\epsilon_{0}^{(\frac{3}{2})^s}+\epsilon_{0}^{0.9(\frac{3}{2})^{s-1}}\right)\left(\epsilon_{0}^{(\frac{3}{2})^s}+\epsilon_{0}^{1.8(\frac{3}{2})^{s-1}}\right)\\
&=&e^{\frac{3s^{\frac{4}{\theta}+6}}{\tau^{\frac{2}{\theta}+3}}}\left(\epsilon_{0}^{2.2(\frac{3}{2})^s}+\epsilon_{0}^{1.8(\frac{3}{2})^s}
+\epsilon_{0}^{1.6(\frac{3}{2})^s}+\epsilon_{0}^{2(\frac{3}{2})^s}\right)\\
&\leq& 4e^{\frac{3s^{\frac{4}{\theta}+6}}{\tau^{\frac{2}{\theta}+3}}}\epsilon_{0}^{1.6(\frac{3}{2})^s}\\
&<&\epsilon_{0}^{1.5(\frac{3}{2})^s}\ \mbox{for $0<\epsilon_0\ll1$ (depending on $\rho,\theta$ only)}\\
&=&\epsilon_{s+1},\\
||R_{1,s+1}||_{\rho_{s+1}}^{+}
&\leq& e^{\frac{3s^{\frac{4}{\theta}+6}}{\tau^{\frac{2}{\theta}+3}}}\left( \epsilon_{0}^{(\frac{3}{2})^s}+\epsilon_{0}^{1.8(\frac{3}{2})^{s-1}}\right)\\
&=&e^{\frac{3s^{\frac{4}{\theta}+6}}{\tau^{\frac{2}{\theta}+3}}}\left( \epsilon_{0}^{(\frac{3}{2})^s}+\epsilon_{0}^{1.2(\frac{3}{2})^{s}}\right)\\
&\leq&2 e^{\frac{3s^{\frac{4}{\theta}+6}}{\tau^{\frac{2}{\theta}+3}}}\epsilon_{0}^{(\frac{3}{2})^s}\\
&<&\epsilon_{s+1}^{0.6}\ \ \mbox{for $0<\epsilon_0\ll1$ (depending on $\rho,\theta$ only)},\\
\end{eqnarray*}
and
\begin{eqnarray*}
||R_{2,s+1}||_{\rho_{s+1}}^{+}
&\leq& ||R_{2,s}||_{\rho_{s}}^{+}+e^{\frac{3s^{\frac{4}{\theta}+6}}{\tau^{\frac{2}{\theta}+3}}}
\left(\epsilon_{0}^{(\frac{3}{2})^s}+\epsilon_{0}^{0.6(\frac{3}{2})^{s}}\right)\\
&\leq&(1+d_s)\epsilon_0+2e^{\frac{3s^{\frac{4}{\theta}+6}}{\tau^{\frac{2}{\theta}+3}}}
\epsilon_{0}^{0.6(\frac{3}{2})^s}\\
&\leq&(1+d_{s+1})\epsilon_0\ \ \mbox{for $0<\epsilon_0\ll1$ (depending on $\rho,\theta$ only)},
\end{eqnarray*}
which are just the assumptions (\ref{200})-(\ref{202}) at stage $s+1$.

Next, let $S=\mathcal{C}_{\frac{1}{10}\lambda_s\eta_s}(V_s)$  and if $V\in \mathcal{C}_{\frac{\eta_s}{2}}(V_s)$, by using Cauchy's estimate implies
\begin{eqnarray}
\nonumber\sum_{n\in\mathbb{Z}}\left|\frac{\partial \widetilde{V}_{m,s}}{\partial V_n}(V)\right|
\nonumber&\leq& \frac{2}{\eta_s}||\widetilde{V}_s||_\infty\\
\label{M11}&<&10 \eta_s^{-1}\ \ \mbox{(since $||\widetilde{V}_s||_\infty\leq 1 $)} \ \mbox{for all $m$},
\end{eqnarray}
and let $X\in \mathcal{C}_{\frac{1}{10}\lambda_s\eta_s}(V_s)$, then
\begin{eqnarray*}
||\widetilde{V}_s(X)-\omega||_{\infty}
&=&||\widetilde{V}_s(X)-\widetilde{V}_s(V_s)||_{\infty}\\
& \leq&\sup_{\mathcal{C}_{\frac{1}{10}\lambda_s\eta_s}(V_s)}\left|\left|\frac{\partial \widetilde{V}_s}{\partial V}\right|\right|_{l^{\infty}\rightarrow l^{\infty}}||X-V_s||_{\infty}\\
&<&10 \eta_s^{-1}\cdot\frac{1}{10}\lambda_s\eta_s\ \ \mbox{(in view of (\ref{M11}))}\\
&=&\lambda_s,
\end{eqnarray*}
that is
\begin{equation*}
\widetilde{V}_s(\mathcal{C}_{\frac{1}{10}\lambda_s\eta_s}(V_s))\subseteq \mathcal{C}_{\lambda_s}(\omega).
\end{equation*}
Recalling the estimates in section 4, we have
\begin{eqnarray}
\nonumber|B^{(m)}_{a00}|
\nonumber&\leq& ||R_{1,s+1}||_{\rho_{s+1}}^+e^{2\rho_{s+1}(\sum_{n}a_n|n|^{\theta}+|m|^{\theta}-(n_1^{*})^{\theta})}\\
\label{M12}&<&\epsilon_{0}^{0.6(\frac{3}{2})^{s}}e^{2\rho_{s+1}(\sum_{n}a_n|n|^{\theta}+|m|^{\theta}-(n_1^{*})^{\theta})}.
\end{eqnarray}
Assuming further
\begin{equation}\label{M13}
I_{n}(0)\leq e^{-2r|n|^{\theta}}
\end{equation}
and
\begin{equation}\label{M14}
\rho_s<\frac{1}{2}r,\forall s,
\end{equation}
we obtain
\begin{eqnarray}
\nonumber|\sum_{a}B^{(m)}_{a00}\mathcal{M}_{a00}|
\nonumber&\leq & \epsilon_{0}^{0.6(\frac{3}{2})^{s}}\sum_{a}e^{2\rho_{s+1}(\sum_{n}a_n|n|^{\theta}+|m|^{\theta}-(n_1^{*})^{\theta})}\prod_{n}I_{n}(0)^{a_n}\\
\nonumber&\leq& \epsilon_{0}^{0.6(\frac{3}{2})^{s}}\sum_{a}e^{2\rho_{s+1}(\sum_{n}a_n|n|^{\theta})}\prod_{n}I_{n}(0)^{a_n}\\
\nonumber&\leq& \epsilon_{0}^{0.6(\frac{3}{2})^{s}}\sum_{a}e^{\sum_{n}2\rho_{s+1}a_n|n|^{\theta}-\sum_{n}2r a_n|n|^{\theta}}\ \mbox{(in view of (\ref{M13}))}\\
\nonumber&\leq& \epsilon_{0}^{0.6(\frac{3}{2})^{s}}\sum_{a}e^{-r(\sum_{n}a_n|n|^{\theta})}\ \mbox{(in view of (\ref{M14}))}\\
\nonumber&\leq& \epsilon_{0}^{0.6(\frac{3}{2})^{s}}\prod_{n}(1-e^{-r n^{\theta}})^{-1} \ \mbox{(by Lemma \ref{a3})}\\
\label{M15}&\leq&\left(\frac{1}{r}\right)^{C(\theta){r^{-\frac{1}{\theta}}}}\epsilon_{0}^{0.6(\frac{3}{2})^{s}}\ \ \mbox{(by Lemma \ref{a5})}.
\end{eqnarray}
By (\ref{M15}), we have
\begin{eqnarray}
\nonumber|\widetilde{V}_{m,s+1}-\widetilde{V}_{m,s}|
\nonumber&<&\left(\frac{1}{r}\right)^{C(\theta){r^{-\frac{1}{\theta}}}}\epsilon_{0}^{0.6(\frac{3}{2})^{s}}\\
\label{M16}&<&\epsilon_{s}^{0.5}\ \ \mbox{(for $\epsilon_0$ small enough)},
\end{eqnarray}
which verifies (\ref{206}). Further applying Cauchy's estimate on $\mathcal{C}_{\lambda_s\eta_s}(V_s)$, one gets
\begin{eqnarray}\label{633}
\nonumber\sum_{n\in\mathbb{Z}}\left|\frac{\partial \widetilde{V}_{m,s+1}}{\partial V_n}-\frac{\partial \widetilde{V}_{m,s}}{\partial V_n}\right|
\nonumber&\leq& C(\theta)\frac{||\widetilde{V}_{s+1}-\widetilde{V}_{s}||_\infty}{\lambda_s\eta_s}\\
\nonumber&\leq& C(\theta)\frac{\epsilon_{s}^{0.5}}{\lambda_s\eta_s}\\
\nonumber&\leq& e^{C(\theta)(\ln\frac{1}{\epsilon_{s+1}})^{\frac{4}{4+\theta}}-\frac13\ln\frac{1}{\epsilon_{s+1}}}\left(\frac{1}{\eta_s}\right)\\
\nonumber&\leq& e^{-\frac14\ln\frac{1}{\epsilon_{s+1}}}\left(\frac{1}{\eta_s}\right)\ \ \mbox{(for $\epsilon_0$ small enough)}\\
\label{M17}&=& \frac{1}{\eta_s}\epsilon_{0}^{\frac{1}{4}(\frac{3}{2})^{s+1}}.
\end{eqnarray}
Since
\begin{equation*}
\eta_{s+1}=\frac{1}{20}\lambda_s\eta_s,
\end{equation*}
it follows that
\begin{eqnarray}
\nonumber\eta_{s+1}&\geq& \eta_s e^{-C(\theta)(\ln\frac{1}{\epsilon_0})^{\frac{4}{4+\theta}}(\frac32)^{\frac{4}{4+\theta}(s+1)}}\\
\nonumber&\geq& \eta_se^{-C(\theta)\ln\frac{1}{\epsilon_0}\cdot(\frac32)^{\frac{5}{5+\theta}s}}\ \ \ \mbox{(for $\epsilon_0$ small enough)}\\
\label{M18}&=&\eta_s\epsilon_0^{C(\theta)(\frac32)^{\frac{5s}{5+\theta}}},
\end{eqnarray}
and hence by iterating (\ref{M18}) implies
\begin{eqnarray}
\nonumber\eta_{s}&\geq&\eta_0\epsilon_{0}^{C(\theta)\sum_{i=0}^{s-1}(\frac{3}{2})^{\frac{5i}{\theta+5}}}\\
\nonumber&=&\eta_0\epsilon_{0}^{C(\theta)\frac{(\frac32)^{\frac{5s}{\theta+5}}-1}{(\frac32)^{\frac{5}{\theta+5}}-1}}\\
\nonumber&>&\epsilon_{0}^{C(\theta)(\frac{3}{2})^{\frac{5s}{\theta+5}}}\\
\label{M19}&\geq&\epsilon_{0}^{\frac{1}{100}(\frac{3}{2})^{s}}\ \ \ \mbox{(for $\epsilon_0$ small enough)}.
\end{eqnarray}
On $ \mathcal{C}_{\frac{1}{10}\lambda_s\eta_s}(V_s)$, for any $m$, we deduce from (\ref{M17}), (\ref{M19}) and the assumption (\ref{199}) that
\begin{eqnarray*}
\sum_{n\in\mathbb{Z}}\left|\frac{\partial \widetilde{V}_{m,s+1}}{\partial V_n}-\delta_{mn}\right|
&\leq&\sum_{n\in\mathbb{Z}}\left|\frac{\partial \widetilde{V}_{m,s+1}}{\partial V_n}-\frac{\partial \widetilde{V}_{m,s}}{\partial V_n}\right|+\sum_{n\in\mathbb{Z}}\left|\frac{\partial \widetilde{V}_{m,s}}{\partial V_n}-\delta_{mn}\right|\\
&\leq&\epsilon_{0}^{(\frac{3}{8}-\frac{1}{100})(\frac{3}{2})^{s}}+d_s\epsilon_{0}^{\frac{1}{10}}\\
&<&d_{s+1}\epsilon_{0}^{\frac{1}{10}},
\end{eqnarray*}
and consequently
\begin{equation}\label{M20}
\left|\left|\frac{\partial \widetilde{V}_{s+1}}{{\partial V}}-I\right|\right|_{l^{\infty}\rightarrow l^{\infty}}<d_{s+1}\epsilon_{0}^{\frac{1}{10}},
\end{equation}
which verifies (\ref{199}) for $s+1$.

Finally, we will freeze $\omega$ by invoking an inverse function theorem. Consider the following functional equation
\begin{equation}\label{M21}
\widetilde{V}_{s+1}(V_{s+1})=\omega,  V_{s+1}\in \mathcal{C}_{\frac{1}{10}\lambda_s\eta_s}(V_s),
\end{equation}
from (\ref{M20}) and the standard inverse function theorem implies (\ref{M21}) having a solution $V_{s+1}$, which verifies (\ref{198}) for $s+1$. Rewriting (\ref{M21}) as
\begin{equation}\label{M22}
V_{s+1}-V_s=(I-\widetilde{V}_{s+1})(V_{s+1})-(I-\widetilde{V}_{s+1})({V_s})+(\widetilde{V}_s-\widetilde{V}_{s+1})(V_s),
\end{equation}
and by using (\ref{M16}) (\ref{M20}) implies
\begin{equation}\label{M23}
||V_{s+1}-V_s||_{\infty}\leq (1+d_{s+1})\epsilon_{0}^{\frac{1}{10}}||V_{s+1}-V_s||_{\infty}+\epsilon_s^{0.5}<2\epsilon_s^{0.5}\ll \lambda_s\eta_s,
\end{equation}
which verifies (\ref{205}) and completes the proof of the iterative lemma.
\end{proof}

We are now in a position to prove the convergence. To apply iterative lemma with $s=0$, set
\begin{equation*}
V_0=\omega,\hspace{12pt}\widetilde{V}_0=id,\hspace{12pt}\eta_0=1-\sup_{n\in\mathbb{Z}}|\omega_n|,\hspace{12pt}\rho_0=\frac{r}{20},\hspace{12pt}\epsilon_0=C\epsilon,
\end{equation*}
and consequently (\ref{198})-(\ref{202}) with $s=0$ are satisfied. Hence, the iterative lemma applies, and we obtain a decreasing
sequence of domains $D_{s}\times\mathcal{C}_{\eta_{s}}(V_{s})$ and a sequence of
transformations
\begin{equation*}
\Phi^s=\Phi_1\circ\cdots\circ\Phi_s:\hspace{6pt}D_{s}\times\mathcal{C}_{\eta_{s}}(V_{s})\rightarrow D_{0}\times\mathcal{C}_{\eta_{0}}(V_{0}),
\end{equation*}
such that $H\circ\Phi^s=N_s+P_s$ for $s\geq1$. Moreover, the
estimates (\ref{203})-(\ref{205}) hold. Thus we can show $V_s$ converge to a limit $V_*$ with the estimate
\begin{equation*}
||V_*-\omega||_{\infty}\leq\sum_{s=0}^{\infty}2\epsilon_{s}^{0.5}<\epsilon_{0}^{0.4},
\end{equation*}
and $\Phi^s$ converge uniformly on $D_*\times\{V_*\}$, where $D_*=\{(q_n)_{n\in\mathbb{Z}}:\frac{2}{3}\leq|q_n|e^{r|n|^{\theta}}\leq\frac{5}{6}\}$, to $\Phi:D_*\times\{V_*\}\rightarrow D_0$ with the estimates
\begin{eqnarray}
\nonumber&&||\Phi-id||_{r,\infty}\leq \epsilon_{s}^{0.4},\\
\nonumber&&||D\Phi-I||_{(r,\infty)\rightarrow(r,\infty)}\leq \epsilon_{s}^{0.4}.
\end{eqnarray}
Hence
\begin{equation}\label{060101}
H_*=H\circ\Phi=N_*+R_{2,*},
\end{equation}
where
\begin{equation}
N_*=\sum_{n\in\mathbb{Z}}(n^2+\omega_n)|q_n|^2
\end{equation}
and
\begin{equation}\label{062811}
||R_{2,*}||_{\frac{r}{10}}^{+}\leq\frac{7}{6}\epsilon_0.
\end{equation}
By (\ref{050907}), the Hamiltonian vector field $X_{R_{2,*}}$ is a bounded map from $\mathfrak{H}_{r,\infty}$ into $\mathfrak{H}_{r,\infty}$. Taking \begin{equation}\label{072701}
I_n(0)=\frac{3}{4}e^{-2r|n|^{\theta}},
 \end{equation}we get an invariant torus $\mathcal{T}$ with frequency $(n^2+\omega_n)_{n\in\mathbb{Z}}$ for ${X}_{H_*}$. Finally, by $X_H\circ\Phi=D\Phi\cdot{X}_{H_*}$, $\Phi(\mathcal{T})$ is the desired invariant torus for the NLS (\ref{L1}). Moreover, we deduce the torus $\Phi(\mathcal{T})$ is linearly stable from the fact that (\ref{060101}) is a normal form of order 2 around the invariant torus.

\section{Long time stability of full dimensional KAM tori}
In this section, we would like to study the long time stability of the invariant torus $\Phi(\mathcal{T})$. To this end, we will construct a normal form of order $M$ in the neighborhood of $\mathcal{T}=(I_n(0)=\frac{3}{4}e^{-2r|n|^{\theta}})_{n\in\mathbb{Z}}$ firstly (see (\ref{072701})). Precisely,  given a $\tau\ll\epsilon$, define the neighborhood of the torus $\mathcal{T}$ by
\begin{equation}
\widetilde D{(\tau)}=\{q:||J||_{r,\infty}<\tau, \mbox{where}\ J=I-I(0)\}
\end{equation}
and one has the following theorem:

\begin{theorem}\label{060102}
Consider the normal form of order $2$ (see (\ref{060101}))
\begin{equation*}
H_*=H\circ\Phi=N_*+R_{2,*}.
\end{equation*}
If $\omega=(\omega_n)_{n\in\mathbb{Z}}$ is Diophantine, then there is
a symplectic map $$\Psi:\widetilde D(\tau)\rightarrow \widetilde D(2\tau)$$ such that
\begin{equation}
H_*\circ\Psi=N_*+Z+Q,
\end{equation}
where $Z$ is the integrable term depending on the variables $I$ and the remainder term $Q$ satisfies
\begin{equation}\label{083104}
\sup_{\widetilde D(\tau)}||X_{Q_{(M+1)}}||_{r,\infty}\leq \tau^{\frac14|\ln \tau|^{\frac{\theta}{10}}}.
\end{equation}
\end{theorem}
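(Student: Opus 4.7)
The plan is to adapt the KAM step of Section 3 to a Birkhoff-style iteration in the neighborhood $\widetilde D(\tau)$ of the torus $\mathcal T$, rather than at the equilibrium $q=0$. The Hamiltonian produced in Section 5, $H_* = N_* + R_{2,*}$, is already a normal form of order $2$ in $J = I - I(0)$ (every monomial of $R_{2,*}$ carries two $J$-factors out front). A Birkhoff step will eliminate the non-resonant ($k \neq k'$) content of the current lowest-order $J$-homogeneous piece, producing a new perturbation whose non-integrable part is one order deeper in $J$; iterating $M = \lfloor|\ln\tau|^{\theta/10}\rfloor$ times will yield the claimed bound.

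At the $s$-th stage I write $H_s = N_* + Z_s + P_s$ with $Z_s$ the accumulated integrable part (monomials with $k=k'$) and $P_s$ the non-integrable remainder. Denoting by $P_s^{\sharp}$ the homogeneous piece of $P_s$ of lowest $J$-order, I solve the homological equation
\begin{equation*}
\{N_*, F_s\} + P_s^{\sharp} = [P_s^{\sharp}]
\end{equation*}
via the coefficient formulas of Section 3 with $\omega_n$ in place of $\widetilde V_n$. Since $\omega$ is Diophantine, Lemma \ref{S6} delivers $\|F_s\|_{\rho_s+\delta}^+ \leq \gamma^{-1} e^{C(\theta)\delta^{-5/\theta}} \|P_s^{\sharp}\|_{\rho_s}^+$. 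Lemmas \ref{010} and \ref{E1} control the pushforward under $\Phi_s = X_{F_s}^1$, and Lemma \ref{063004} converts the coefficient-norm bound on $F_s$ into a sup-norm bound on $X_{F_s}$ over $\widetilde D(\tau)$, giving $\|\Phi_s - \mathrm{id}\|_{r,\infty} \ll \tau/M$. Telescoping, $\Psi := \Phi_M \circ \cdots \circ \Phi_1$ maps $\widetilde D(\tau)$ into $\widetilde D(2\tau)$, and the new $P_{s+1}$ has $J$-order strictly greater than that of $P_s$.

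With $\delta = |\ln\tau|^{-\alpha}$ for some $\alpha \in (\theta/10, \theta/5)$, the analyticity budget $M\delta \leq r/10$ is respected, while the accumulated multiplicative loss satisfies
\begin{equation*}
\prod_{s=1}^{M}\gamma^{-1} e^{C(\theta)\delta^{-5/\theta}} \leq \exp\bigl(C(\theta,\gamma)\, M\, |\ln\tau|^{5\alpha/\theta}\bigr) = \tau^{-o(M)}
\end{equation*}
because $5\alpha/\theta < 1$. Each additional $J$-order gains a factor $\tau$ on $\widetilde D(\tau)$, so the remainder $Q = P_{M+1}$, of $J$-order at least $M+2$, obeys $\|X_Q\|_{r,\infty} \leq \tau^{M+2 - o(M)} \leq \tau^{(1/4)|\ln\tau|^{\theta/10}}$, which is the required bound on $X_{Q_{(M+1)}}$.

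The main obstacle is the quantitative matching of $\delta$ and $M$: the small-divisor loss $e^{C\delta^{-5/\theta}}$ from Lemma \ref{S6} and the Poisson-bracket loss $(1/\delta)^{C\delta^{-1/\theta}}$ from Lemma \ref{010} both blow up rapidly as $\delta \to 0$, while $\delta$ must stay small enough for $M\delta$ to fit inside the $r/10$ analyticity budget across all $M$ steps. The exponent $\theta/10$ in $M = \lfloor|\ln\tau|^{\theta/10}\rfloor$ is precisely what opens a non-empty window $\alpha \in (\theta/10,\theta/5)$ in which all these constraints are simultaneously satisfiable. A secondary bookkeeping issue is that Poisson brackets of the non-integrable $F_s$ with $Z_s$ and with $R_{2,*}$ may redistribute weight among the $I_n(0)$-, $J_n$- and free-$q$-factors of the monomials, so the Hamiltonian must be re-expanded in the norm $\|\cdot\|^+$ of Section 3.2 after every step before the next homological equation is set up.
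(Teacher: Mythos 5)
Your proposal follows the same overall strategy as the paper: start from the order-$2$ normal form $H_*=N_*+R_{2,*}$, run a Birkhoff-type iteration on the $J$-homogeneous decomposition, solve the homological equations using the Diophantine condition on $\omega$ via the argument of Lemma \ref{S6}, control each step with Lemmas \ref{010}, \ref{E1}, and \ref{063004}, and iterate $M\sim|\ln\tau|^{\theta/10}$ times so that the remaining $J$-order beats the accumulated losses. This is precisely the content of Lemma \ref{080301} (initialization at order $3$) and Lemma \ref{062808} (inductive step), followed by the short argument after them; your reduction to the exponent balance $\theta/10$ versus $5/\theta$ is the same balance that fixes $M$ in the paper.

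One quantitative point deserves a warning. Your accounting of the accumulated coefficient-norm loss as $\prod_{s=1}^M \gamma^{-1}e^{C\delta^{-5/\theta}} = e^{CM\delta^{-5/\theta}}$ is the right estimate for the symplectic map $\Psi$, but it \emph{undercounts} the loss borne by the remainder $Q$: the iterated Poisson brackets amplify the coefficient norm by a factor that grows with the $J$-order $j$, not by a fixed factor per step. In the paper's iterative lemma the bound has the form $\|Q_{(s+1)j}\|_{\rho_{s+1}}\leq (C(\delta,\theta,\gamma))^{(s-2)j}$, so at the end the relevant loss is $e^{CM^2\delta^{-5/\theta}}$ (with $j\approx M$), one power of $M$ more than your product over steps. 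This tightens the admissible window for $\alpha$ to roughly $(\theta/10,\,\theta/5-\theta^2/50)$ rather than the full $(\theta/10,\theta/5)$; it remains non-empty for all $\theta\in(0,1)$, so your argument can be repaired, but as written the upper endpoint is too generous. The paper sidesteps the choice altogether by taking the boundary value $\delta=r/(80M)$, which keeps $M\delta$ equal to a fixed constant and makes the exponent accounting cleaner. Also, ``one order deeper in $J$'' is slightly imprecise — the non-integrable part of $\{P_s^\sharp,F_s\}$ jumps to $J$-order at least $2s-1$, not just $s+1$ — though ``at least one order deeper'' is the property actually needed for the induction.
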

Firstly, we will construct a normal form of order $3$ around the tori $\mathcal{T}$ based on a standard normal form procedure where noting $H_*$ is already a normal form of order $2$. Firstly, rewrite $R_{2,*}$ as
\begin{equation*}
R_{2,*}=\sum_{j\geq 2}R_j
\end{equation*}
with
\begin{equation*}
R_j=\sum_{|l|=j}J^{l}\sum_{a,k,k'}R_{2,*;akk'}^{(l)}\mathcal{M}_{akk'}.
\end{equation*}
Then we have the following lemma
\begin{lemma}\label{080301}
Consider the normal form of order $2$ (see (\ref{060101}))
\begin{equation*}
H_*=H\circ\Phi=N_*+R_{2,*}.
\end{equation*}
If $\omega=(\omega_n)_{n\in\mathbb{Z}}$ is Diophantine,
then there exists a symplectic map $\Psi_2=X_{F_2}^t|_{t=1}$ such that
\begin{eqnarray}
H_{3}:=H_*\circ \Psi_2=N_{*}+Z_3+Q_{3},
\end{eqnarray}where
\begin{equation}\label{080304}
Z_3=\sum_{|l|=2}J^{l}\sum_{a,k,k'\atop k=k'}R_{2,*;akk'}^{(l)}\mathcal{M}_{akk'},
\end{equation}
\begin{equation}
Q_{3}=\sum_{j\geq 3}Q_{3j},
\end{equation}
with
\begin{equation}
Q_{3j}=\sum_{|l|=j}J^{l}\sum_{a,k,k'}Q_{3;akk'}^{(l)}\mathcal{M}_{akk'}.
\end{equation}
Moreover, the following estimates hold:
\begin{equation}\label{080302}
||F_{2}||_{\frac{r}{10}+\frac{r}{120}}\leq C_1(r,\theta,\gamma)\cdot||R_{2,*}||_{\frac r{10}+\frac{r}{240}},
\end{equation}
and
\begin{equation}\label{080303}
||Q_{3j}||_{\frac{r}{10}+\frac{r}{80}}\leq ||R_{2,*}||_{\frac{r}{10}+\frac{r}{240}}\left( C_2(r,\theta,\gamma) ||R_{2,*}||_{\frac{r}{10}+\frac{r}{240}}\right)^{j-2} ,
\end{equation}
where $C_1(r,\theta,\gamma)$ and $C_2(r,\theta,\gamma)$ are positive constants depending on $r,\theta, \gamma$ only.
\end{lemma}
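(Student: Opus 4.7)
My plan is to treat this as one step of a standard Birkhoff-type normal form procedure around the torus $\mathcal{T}$, adapted to the monomial/weighted-norm setup already developed in Sections 2--4. Because $H_\ast$ is already in normal form of order~$2$ in $J$, only the quadratic-in-$J$ piece
\[
R_2 = \sum_{|l|=2} J^{l}\sum_{a,k,k'}R_{2,\ast;akk'}^{(l)}\mathcal{M}_{akk'}
\]
needs to be partially eliminated. I would first split $R_2 = R_2^{\mathrm{res}} + R_2^{\mathrm{nr}}$ into its resonant ($k=k'$) and non-resonant ($k\neq k'$) parts, \emph{define} $Z_3 := R_2^{\mathrm{res}}$ so that (\ref{080304}) holds by construction, and set up the homological equation
\[
\{N_\ast, F_2\} + R_2^{\mathrm{nr}} = 0.
\]
Since $\{N_\ast, J^l\}=0$ and $\{N_\ast, \mathcal{M}_{akk'}\} = \mathbf{i}\bigl(\sum_n (k_n-k'_n)(n^2+\omega_n)\bigr)\mathcal{M}_{akk'}$, this yields an explicit diagonal solution
\[
F_{2;akk'}^{(l)} = \frac{\mathbf{i}\,R_{2,\ast;akk'}^{(l)}}{\sum_{n}(k_n-k'_n)(n^2+\omega_n)},\qquad |l|=2,\ k\neq k'.
\]

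Next I would estimate $\|F_2\|$ exactly as in the proof of Lemma~\ref{S6}: the Diophantine hypothesis on $\omega$ controls the denominator by $\gamma\prod_n(1+(k_n-k'_n)^2|n|^4)^{-1}$, and then the arithmetical argument based on Lemma~\ref{005} (applied to $\mathcal{M}_{akk'}$, where (\ref{0m}) is automatic from the definition of $\mathcal{M}$) plus the exponential/logarithmic balance used in (\ref{051306}) converts the divisor loss into a finite weight loss, giving
\[
\|F_2\|_{\frac{r}{10}+\frac{r}{120}} \le C_1(r,\theta,\gamma)\,\|R_{2,\ast}\|_{\frac{r}{10}+\frac{r}{240}},
\]
which is (\ref{080302}). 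The $J$-factors $J^l$ are treated as inert multiplicative weights since $\{N_\ast, J^l\}=0$, so they contribute only a polynomial loss absorbed into $C_1$.

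With $F_2$ in hand, I expand $H_3 = H_\ast\circ X_{F_2}^1$ as a Lie series
\[
H_3 = N_\ast + R_2^{\mathrm{res}} + \sum_{m\ge 1}\frac{1}{m!}\underbrace{\{\cdots\{R_2^{\mathrm{nr}},F_2\},\ldots,F_2\}}_{m\text{-fold}} + \sum_{j\ge 3}\sum_{m\ge 0}\frac{1}{m!}\underbrace{\{\cdots\{R_j,F_2\},\ldots,F_2\}}_{m\text{-fold}},
\]
where the first two summands collapse to $N_\ast+Z_3$ by the homological equation. The key structural observation is that $F_2$ has $J$-order exactly $2$ and each term in $R_{2,\ast}$ has $J$-order $\ge 2$; a direct computation (using $\{J^l,J^{l'}\}=0$ and the Leibniz rule to see that $\{J^l\mathcal{M},J^{l'}\mathcal{M}'\}$ has $J$-order $\ge |l|+|l'|-1$) shows every remaining term has $J$-order $\ge 3$, so we may collect them as $Q_3=\sum_{j\ge 3}Q_{3j}$ sorted by $J$-degree. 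To obtain (\ref{080303}), I iterate the Poisson-bracket estimate of Lemma~\ref{010} with the weight losses $\delta_1,\delta_2$ absorbed into the small gap $\frac{r}{240}\to\frac{r}{120}\to\frac{r}{80}$, giving the geometric bound $\|Q_{3j}\|_{\frac{r}{10}+\frac{r}{80}} \le \|R_{2,\ast}\|\,(C_2\|R_{2,\ast}\|)^{j-2}$.

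The main obstacle I anticipate is \emph{bookkeeping the $J$-degree} while still using the norm $\|\cdot\|_\rho$ of Definition~\ref{083103}, which was designed for polynomial monomials rather than Taylor series in $J$. The resolution is to write $J_n = q_n\bar q_n - I_n(0)$ and treat each $J^l\mathcal{M}_{akk'}$ as a finite sum of standard monomials, then verify that the weight in (\ref{042602}) of each such summand is controlled by the weight of the parent $J^l\mathcal{M}_{akk'}$ up to a harmless constant (using $I_n(0)=\tfrac{3}{4}e^{-2r|n|^\theta}$ to absorb any $I_n(0)$ factors, as already done in the derivation of (\ref{M15})). Once this translation is in place, the Poisson-bracket and small-divisor estimates from Sections~3--4 apply verbatim, and the lemma follows.
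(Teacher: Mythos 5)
Your proposal matches the paper's proof in all essentials: you split $R_2$ into resonant and non-resonant parts, set $Z_3$ to be the resonant part, solve the homological equation $\{N_*,F_2\}+R_2^{\mathrm{nr}}=0$ with divisor $\sum_n(k_n-k'_n)(n^2+\omega_n)$, invoke the Diophantine hypothesis via the mechanism of Lemma~\ref{S6} to get (\ref{080302}), observe that $F_2$ has $J$-degree~$2$ and that each Poisson bracket loses at most one $J$, so that all post-cancellation terms have $J$-degree $\geq 3$, and then iterate Lemma~\ref{010} to obtain the geometric bound (\ref{080303}); this is precisely the three-step scheme (homological equation, its solution, remainder estimate) in the paper's Step~1--Step~3. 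One small bookkeeping slip: your displayed Lie series drops the contributions $\sum_{m\ge 2}\frac{1}{m!}\operatorname{ad}_{F_2}^{m}N_*$ and $\sum_{m\ge 1}\frac{1}{m!}\operatorname{ad}_{F_2}^{m}R_2^{\mathrm{res}}$ (and the $\operatorname{ad}_{F_2}^{m}R_2^{\mathrm{nr}}$ coefficients are really $\frac{m}{(m+1)!}$ after the cancellation, not $\frac{1}{m!}$); none of this changes the estimate since each omitted term is of $J$-degree $\ge 3$ and is controlled by the same iterated-bracket bound, but it should be included when the expansion is written out, as the paper does.
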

\begin{proof}
\textbf{Step 1. The derivative of the homological equation}

Let
\begin{equation*}
F_{2}=\sum_{|l|=2}J^{l}\sum_{a,k,k'}F_{2;akk'}^{(l)}\mathcal{M}_{akk'},
\end{equation*}
and let $\Psi_{2}=X_{F_{2}}^t|_{t=1}$ be the time-1 map of the Hamiltonian vector field $X_{F_{2}}$.

Using Taylor's formula,
\begin{eqnarray*}
H_{3}&:=&H_{*}\circ X_{F_{2}}^t|_{t=1}\\
&=&(N_*+R_{2,*})\circ X_{F_{2}}^t|_{t=1}\\
&=&N_*+\{N_*,F_{2}\}+\sum_{n\geq2}\frac{1}{n!}\underbrace{\{\cdots\{N_*,{F_{2}}\},{F_{2}},\cdots,{F_{2}}\}}_{n-\mbox{fold}}\\
&&+R_{2}+\sum_{n\geq1}\frac{1}{n!}\underbrace{\{\cdots\{R_{2},{F_{2}}\},{F_{2}},\cdots,{F_{2}}\}}_{n-\mbox{fold}}\\
&&+\sum_{n\geq0}\frac{1}{n!}\underbrace{\left\{\cdots\left\{\sum_{j\geq 3}R_{j},{F_{2}}\right\},{F_{2}},\cdots,{F_{2}}\right\}}_{n-\mbox{fold}}.
\end{eqnarray*}
Then we obtain the homological equation
\begin{equation}\label{080305}
\{N_*,F_{2}\}+R_{2}={Z_{3}},
\end{equation}
where $Z_3$ is given by (\ref{080304}).

If the homological equation (\ref{080305}) is solvable, then we define

\begin{eqnarray}
\nonumber Q_{3}
&=&\label{080306}\sum_{n\geq2}\frac{1}{n!}\underbrace{\{\cdots\{N_*,{F_{2}}\},{F_{2}},\cdots,{F_{2}}\}}_{n-\mbox{fold}}\\
&&\label{080307}+\sum_{n\geq1}\frac{1}{n!}\underbrace{\{\cdots\{R_{2},{F_{2}}\},{F_{2}},\cdots,{F_{2}}\}}_{n-\mbox{fold}}\\
&&\label{080308}+\sum_{n\geq0}\frac{1}{n!}\underbrace{\left\{\cdots\left\{\sum_{j\geq 3}R_j,{F_{2}}\right\},{F_{2}},\cdots,{F_{2}}\right\}}_{n-\mbox{fold}}
\end{eqnarray}
and one has
\begin{equation*}
H_{3}=N_*+Z_{3}+Q_{3}.
\end{equation*}

\textbf{Step 2. The solution of the homological equation (\ref{080305}).}
It is easy to show that the solution of the homological equation is given by
\begin{equation}
F_{2;akk'}^{(l)}=\frac{R_{2,*;akk'}^{(l)}}{\sum_n(k_n-k'_n)(n^2+\omega_n)}.
\end{equation}
Moreover, the inequality (\ref{080302}) holds in view of the fact that $\omega$ is Diophantine and following the proof of Lemma \ref{S6}.

\textbf{Step 3. Estimate the remainder terms $Q_{3}$.}

To this end, we will estimate the norm of (\ref{080306})-(\ref{080308}) respectively.
Without loss of generality, we only consider the following term
\begin{equation}\label{080309}
\frac{1}{n!}\underbrace{\{\cdots\{R_2,{F_{2}}\},{F_{2}},\cdots,{F_{2}}\}}_{n-\mbox{fold}},
\end{equation}
which is in (\ref{080307}). By a direct calculation, one has
\begin{eqnarray}
&&\nonumber\left|\left|\frac{1}{n!}\underbrace{\{\cdots\{R_2,{F_{2}}\},{F_{2}},\cdots,{F_{2}}\}}_{n-\mbox{fold}}\right|\right|_{\frac{r}{10}+\frac{r}{80}}\\
&\leq&\nonumber\frac{1}{n!}\left(C(r,\theta)
||F_{2}||_{\frac{r}{10}+\frac{r}{120}}\right)^n\left(\frac{n}{r}\right)^n||R_2||_{\frac{r}{10}+\frac{r}{240}}\\
&&\nonumber(\mbox{following the proof of (\ref{3.3}) and $C(r,\theta)$ is a positive constant depending on $r$ and $\theta$ only})\\
&\leq&\nonumber\frac{1}{n!}\left(C(r,\theta)\cdot C_1(r,\theta,\gamma)\right)^n\left(\frac{n}{r}\right)^n||R_2||_{\frac{r}{10}+\frac{r}{240}}^{n+1}\\
&&(\mbox{in view of the first inequality in (\ref{080302})})\nonumber\\
&\leq&\nonumber\left(\frac{e}{r}\cdot C(r,\theta)\cdot C_1(r,\theta,\gamma)\right)^n||R_2||_{\frac{r}{10}+\frac{r}{240}}^{n+1}\qquad (\mbox{in view of $n^n/n!\leq e^n$})\\
&\leq&\nonumber\left( C_2(r,\theta,\gamma)\right)^n ||R_{2,*}||_{\frac{r}{10}+\frac{r}{240}}^{n+1},
\end{eqnarray}
where
\begin{equation*}
C_2(r,\theta,\gamma)=\frac{e}{r}\cdot C(r,\theta)\cdot C_1(r,\theta,\gamma)
\end{equation*}depends on $r,\theta,\gamma$ and noting
 \begin{equation*}
 ||R_2||_{\frac{r}{10}+\frac{r}{240}}\leq||R_{2,*}||_{\frac{r}{10}+\frac{r}{240}}
 \end{equation*}
 Finally, note that the term (\ref{080309}) contains at least $(2+n)$ $J's$ and we finish the proof of (\ref{080303}).
\end{proof}
\begin{remark}

In view of  (\ref{N7}), (\ref{062811}) and (\ref{080302}), one has
\begin{equation*}
||F_{2}||_{\frac{r}{10}+\frac{r}{120}}\leq \widetilde C(r,\theta,\gamma)\epsilon_0,
\end{equation*}
where $\widetilde C(r,\theta,\gamma)$ is a positive constant depending on $r,\theta,\gamma$ only. For convenience, we choose $\epsilon_0$ small enough at the beginning such that
\begin{equation*}
\widetilde C(r,\theta,\gamma)\epsilon_0\leq1,
\end{equation*}
and one has
\begin{equation}\label{081001}
2C(\rho,\theta)_{\rho=\frac{r}{10}+\frac{r}{120}}||F_{2}||_{\frac{r}{10}+\frac{r}{120}}\leq 1,
\end{equation}
where the positive constant $C(\rho,\theta)$ is given in (\ref{050907}).
Similarly, one has
\begin{equation*}
||Q_{3j}||_{\frac{r}{10}+\frac{r}{80}}\leq 1,
\end{equation*}
where by choosing $\epsilon_0$ smaller depending on $r,\theta,\gamma$ only.
Furthermore, one has
\begin{equation*}
||Q_3||_{\frac{r}{10}+\frac{r}{80}}\leq \sup_{j}||Q_{3j}||_{\frac{r}{10}+\frac{r}{80}}\leq 1.
\end{equation*}
\end{remark}

Given a large $M>0$, now we will construct a normal form of order $M$ around the torus $\mathcal{T}$. To this end, we give an iterative lemma first.
Take
\begin{equation}\label{062809}
\delta=\frac{r}{80M}.
\end{equation}
For $s\geq 3$, denote
\begin{equation}\label{062810}
\rho_{s}=\frac{r}{10}+\frac{r}{80}+2(s-3)\delta.
\end{equation}
In view of the following two constants
\begin{equation}\label{081002}
C_1(\delta,\theta,\gamma)=\frac{e^3}{\gamma}\cdot e^{C(\theta)\delta^{-\frac5\theta}}
\end{equation}and
\begin{equation}\label{081003}
C_2(\delta,\theta)=\frac{1}{\delta}\left(\frac{1}{\delta}\right)^{C({\theta}){\delta^{-\frac{1}{\theta}}}},
\end{equation}
which are given in Lemma \ref{S6} (see (\ref{S7})) and Lemma \ref{010} (see (\ref{042704})) respectively, define
\begin{equation}\label{062806}
C(\delta,\theta,\gamma)=\frac{e}{\delta}\cdot C_{1}(\delta,\theta,\gamma)\cdot C_2(\delta,\theta).
\end{equation}
\begin{lemma}\label{062808}
Consider the normal form of order $s$ ($s\geq 3$)
\begin{equation}
H_{s}=N_*+Z_{s}+Q_{s},
\end{equation}
where
\begin{equation}\label{060301}
Z_{s}=\sum_{2\leq j\leq s-1}Z_{sj},
\end{equation}
\begin{equation}
Q_{s}=\sum_{j\geq s}Q_{sj},
\end{equation}
with
\begin{equation}
Z_{sj}=\sum_{|l|=j}J^{l}\sum_{a,k,k'\atop k=k'}Z_{s;akk'}^{(l)}\mathcal{M}_{akk'}
\end{equation}
and
\begin{equation}
Q_{sj}=\sum_{|l|=j}J^{l}\sum_{a,k,k'}Q_{s;akk'}^{(l)}\mathcal{M}_{akk'}.
\end{equation}
When $s\geq 3$, suppose $Z_{sj}$ and $Q_{sj}$ satisfy the following estimates
\begin{equation}\label{063001}
||Z_{sj}||_{\rho_{s}}\leq (C(\delta,\theta,\gamma))^{(s-3)j}
\end{equation}
and
\begin{equation}\label{062801}
||Q_{sj}||_{\rho_{s}}\leq (C(\delta,\theta,\gamma))^{(s-3)j}.
\end{equation}

Then there exists a symplectic map $\Psi_s=X_{F_s}^t|_{t=1}$ such that
\begin{eqnarray}
H_{s+1}:=H_s\circ \Psi_s=N_{*}+Z_{s+1}+Q_{s+1},
\end{eqnarray}where
\begin{equation}\label{060301}
Z_{s+1}=\sum_{2\leq j\leq s}Z_{(s+1)j},
\end{equation}
\begin{equation}
Q_{s+1}=\sum_{j\geq s+1}Q_{(s+1)j},
\end{equation}
with
\begin{equation}
Z_{(s+1)j}=\sum_{|l|=j}J^{l}\sum_{a,k,k'\atop k=k'}Z_{s+1;akk'}^{(l)}\mathcal{M}_{akk'}
\end{equation}
and
\begin{equation}
Q_{(s+1)j}=\sum_{|l|=j}J^{l}\sum_{a,k,k'}Q_{s+1;akk'}^{(l)}\mathcal{M}_{akk'}.
\end{equation}
Moreover, the following estimates hold:
\begin{equation}\label{063003}
||F_{s}||_{\rho_{s+1}}\leq C_1(\delta,\theta,\gamma)\cdot(C(\delta,\theta,\gamma))^{(s-3)s},
\end{equation}
\begin{equation}\label{062804}
||Z_{(s+1)j}||_{\rho_{s+1}}\leq (C(\delta,\theta,\gamma))^{(s-2)j},
\end{equation}
and
\begin{equation}\label{062807}
||Q_{(s+1)j}||_{\rho_{s+1}}\leq (C(\delta,\theta,\gamma))^{(s-2)j}.
\end{equation}
\end{lemma}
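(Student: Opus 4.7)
The plan is to execute one Birkhoff normal form step around the torus: take a generating function of the form
\[F_s=\sum_{|l|=s}J^l\sum_{a,\,k\neq k'}F_{s;akk'}^{(l)}\mathcal{M}_{akk'},\]
set $\Psi_s=X_{F_s}^1$, and choose the coefficients so that the homological equation
\[\{N_*,F_s\}+Q_{ss}=Z_{(s+1),s}:=\sum_{|l|=s}J^l\sum_{a,\,k=k'}Q_{s;akk'}^{(l)}\mathcal{M}_{akk'}\]
holds. Since $N_*$ is linear in the actions and $\{J_m,J_n\}=0$, the bracket $\{N_*,F_s\}$ leaves the $J^l$ factor intact, so this equation is solved monomial-by-monomial exactly as in Lemma~\ref{S6}, with the explicit formula $F_{s;akk'}^{(l)}=Q_{s;akk'}^{(l)}/\sum_n(k_n-k_n')(n^2+\omega_n)$. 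The Diophantine hypothesis on $\omega$ and the argument of Lemma~\ref{S6} then yield
\[\|F_s\|_{\rho_{s+1}}\le C_1(\delta,\theta,\gamma)\,\|Q_{ss}\|_{\rho_s}\le C_1(\delta,\theta,\gamma)\,(C(\delta,\theta,\gamma))^{(s-3)s},\]
which is (\ref{063003}).

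Next I Taylor expand $H_{s+1}=H_s\circ X_{F_s}^1$ and use the homological equation to replace $\{N_*,F_s\}$ by $Z_{(s+1),s}-Q_{ss}$, obtaining
\[H_{s+1}=N_*+Z_s+Z_{(s+1),s}+(Q_s-Q_{ss})+\sum_{n\ge 1}\tfrac{1}{n!}\mathrm{ad}_{F_s}^n(Z_s+Q_s)+\sum_{n\ge 2}\tfrac{1}{n!}\mathrm{ad}_{F_s}^{n-1}(Z_{(s+1),s}-Q_{ss}).\]
I set $Z_{s+1}=Z_s+Z_{(s+1),s}$ and define $Q_{s+1}$ as the remaining sum. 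A direct computation of the Poisson bracket shows that the ``double-$J$'' contribution vanishes because $\{J_m,J_n\}=0$, so every bracket with $F_s$ raises the minimum value of $|l|$ by exactly $s-1$; in particular every term entering $Q_{s+1}$ has $|l|\ge s+1$. The estimate (\ref{062804}) is then immediate: for $2\le j\le s-1$ one has $Z_{(s+1),j}=Z_{s,j}$ and the inductive bound $(C(\delta,\theta,\gamma))^{(s-3)j}\le(C(\delta,\theta,\gamma))^{(s-2)j}$ holds since $C\ge 1$, while $Z_{(s+1),s}=[Q_{ss}]$ inherits its bound directly from $Q_{ss}$.

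To prove (\ref{062807}) I iterate the Poisson-bracket estimate of Lemma~\ref{010} exactly as in Lemma~\ref{E1} to get
\[\Bigl\|\tfrac{1}{n!}\mathrm{ad}_{F_s}^n X\Bigr\|_{\rho_{s+1}}\le\Bigl(\tfrac{e\,C_2(\delta,\theta)\,\|F_s\|_{\rho_{s+1}+\delta}}{\delta}\Bigr)^n\|X\|_{\rho_{s+1}+\delta},\]
where $C_2(\delta,\theta)$ is defined in (\ref{081003}). Plugging (\ref{063003}) into this estimate together with the definition (\ref{062806}) of $C(\delta,\theta,\gamma)$, each $n$-fold bracket of $F_s$ with an input $X_{j'}\in\{Z_{s,j'},\,Q_{s,j'}\}$ contributes to $Q_{(s+1),j}$ only when $j'+n(s-1)\le j\le j'+ns$, with contribution bounded by $C(\delta,\theta,\gamma)^{n+(s-3)sn+(s-3)j'}$. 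The key arithmetic identity $(s-2)(j'+n(s-1))-(n+(s-3)sn+(s-3)j')=j'+n\ge 0$ shows that this factor is always dominated by $C(\delta,\theta,\gamma)^{(s-2)j}\cdot C(\delta,\theta,\gamma)^{-(j'+n)}$, so summing the geometric series in $n$ and then over the finitely many admissible $j'$ for each fixed $j$ produces (\ref{062807}), provided $\epsilon_0$ is small enough that the initial step $s=3$ (Lemma~\ref{080301}) supplies the required smallness to start the induction. The main obstacle is precisely this last combinatorial rebalancing: the Poisson-bracket contributions from all pairs $(n,j')$ landing at a common degree $j$ must be tracked carefully to confirm that the exponent on $C(\delta,\theta,\gamma)$ upgrades cleanly from $(s-3)j$ to $(s-2)j$, which hinges on the sharp degree increase of $s-1$ per bracket and on the identity above.
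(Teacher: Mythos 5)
Your proposal follows the paper's own route in every essential respect: the same generating function $F_s$ supported on $|l|=s$ with $k\neq k'$, the same homological equation whose right-hand side is the resonant part $Z_{(s+1),s}$ of $Q_{ss}$, the same explicit small-divisor formula and appeal to the argument of Lemma~\ref{S6}, and the same Taylor expansion controlled through iterated applications of the Poisson-bracket estimate of Lemma~\ref{010} as in Lemma~\ref{E1}. The observation that $\{J_m,J_n\}=0$ forces the minimal number of $J$'s to increase by exactly $s-1$ per bracket is precisely what the paper invokes when it asserts that the $n$-fold bracket contains at least $s+n(s-1)$ factors of $J$.

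One modest difference is in the final bookkeeping. The paper verifies the exponent inequality for a single representative term (with some misprinted exponents: the paper's intermediate display writes $(C)^{(s-2)s}$ for $\|Q_{ss}\|$ and $(C)^{(s-1)(\cdots)}$ at the end, where the hypotheses and conclusion of the lemma dictate $(s-3)s$ and $(s-2)(\cdots)$ respectively), while you package the same inequality as the identity $(s-2)(j'+n(s-1))-(n+(s-3)sn+(s-3)j')=j'+n\ge0$ and explicitly sum the geometric series over admissible $(n,j')$ contributing to a fixed degree $j$. Your version is both cleaner and slightly more careful about the summation, and it makes the exponent gain of one unit per KAM step transparent; the substance of the proof is the same.
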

\begin{proof}
The details of the proof will be given in Appendix.
\end{proof}

\textbf{Proof of Theorem \ref{060102}.}
\begin{proof}
In view of Lemma \ref{080301} and Lemma \ref{062808}, we define
\begin{equation}
\Psi:=\Psi_2\circ\cdots\circ\Psi_M.
\end{equation}
Then one has
\begin{equation}
H_{*}\circ\Psi=N_*+Z_{M+1}+Q_{M+1},
\end{equation}
and
$Z_{M+1}$ and $Q_{M+1}$ satisfies the estimates (\ref{062804}) and (\ref{062807}) with $s=M$ respectively.

For fixed $0<\tau\ll1$, we choose
\begin{equation}\label{063008}
M=\frac{1}3|\ln \tau|^{\theta/10}.
\end{equation}
Now we will estimate the norm of the symplectic map $\Psi$ and the remainder term $Q_{M+1}$ respectively.

In view of (\ref{062809}), (i.e. $\delta=\frac{r}{80M}$), (\ref{081002}) and (\ref{081003}), one has
\begin{equation}
C_1(\delta,\theta,\gamma)=\frac{e^3}{\gamma}\cdot e^{C(\theta)\left(\frac{r}{80M}\right)^{-\frac{5}{\theta}}}\leq e^{M^{\frac{6}{\theta}}}
\end{equation}
and
\begin{equation}
C_2(\delta,\theta)=\frac{80M}{r}\cdot\left(\frac{80M}{r}\right)^{C(\theta)\left(\frac{r}{80M}\right)^{-\frac{1}{\theta}}}\leq M^{M^{\frac{2}{\theta}}},
\end{equation}
where letting $M$ large enough depending on $\gamma,r,\theta$.

For $s\geq 3$ in view of (\ref{063003}), one has
\begin{eqnarray}
||F_{s}||_{\rho_{s+1}}
&\leq&\label{063005}e^{M^{\frac{6}{\theta}}}\cdot\left(M^{M^{\frac{2}{\theta}}}\right)^{(s-3)s}\leq e^{M^{\frac{10}{\theta}}} ,
\end{eqnarray}
Based on (\ref{050907}) in Lemma \ref{063004} and (\ref{063005}), one has
\begin{equation}\label{081004}
\sup_{\widetilde D(\tau)}||X_{F_s}||_{r,\infty}\leq C(r,\theta)\cdot e^{M^{10/\theta}}\tau^{s-1},
\end{equation}
where noting that $F_s$ contains $s$ $J's$. Hence, in view of (\ref{063008}) one has
\begin{equation}\label{063009}
||\Psi_s-id||_{r,\infty}\leq C(r,\theta)\tau^{s-\frac{4}3}.
\end{equation}
For $s=2$ and in view of (\ref{081001}), one has
\begin{equation}\label{063010}
||\Psi_2-id||_{r,\infty}\leq \frac12{\tau},
\end{equation}
Based on (\ref{063009}) and (\ref{063010}), we have
 \begin{equation}\label{063011}
||\Psi-id||_{r,\infty}\leq \tau.
\end{equation}

Now we would like to estimate the remainder term $Q_{M+1}$.
In view of (\ref{062809}) and (\ref{062810}), one has
\begin{equation*}
\rho_{M+1}\leq\frac{11r}{80}.
\end{equation*}
Moreover, based on (\ref{062807}) for $s=M$, we have
\begin{equation}\label{063012}
||Q_{(M+1)j}||_{{\frac{11r}{80}}}\leq (C(\delta,\theta,\gamma))^{(M-2)j}.
\end{equation}

Following the proof of (\ref{081004}), one has
\begin{equation*}
\sup_{\widetilde D(\tau)}||X_{Q_{(M+1)j}}||_{r,\infty}\leq C(r,\theta)\tau^{j-\frac32},
\end{equation*}
where noting that $Q_{(M+1)j}$ contains $j$ $J's$. Furthermore, we finish the proof of (\ref{083104}) by noting $M=\frac{1}3|\ln \tau|^{\frac{\theta}{10}}$. Until now, we construct a normal form of order $\frac{1}3|\ln \tau|^{\frac{\theta}{10}}$ around the torus $\mathcal{T}$. Following the proof of Corollary 2.16 in \cite{BG}, it is a standard way to obtain the long time stability of the torus $\mathcal{T}$, i.e. we finish the proof of (\ref{081101}) in Theorem \ref{L10}.

\end{proof}
\section{Appendix}

\begin{lemma}\label{a1}Let $\theta\in(0,1)$ and $k_n,k'_n\in\mathbb{N},|\widetilde{V}_n|\leq2\ \mbox{for}\ \forall \ n\in\mathbb{Z}$.
Assume further
\begin{equation}
\label{041803}\left|\sum_{n\in\mathbb{Z}}(k_n-k_n')(n^2+\widetilde V_n)\right|\leq1
\end{equation}
and
\begin{equation}
\label{041803'}\sum_{n\in\mathbb{Z}}(k_n-k_n')n=0.
\end{equation}
Then one has
\begin{equation}\label{041809'}
\sum_{n\in\mathbb{Z}}|k_n-k_n'||n|^{\theta/2}\leq3\cdot 6^{\theta/2}\sum_{i\geq3}|n_i|^{\theta},
\end{equation}
where $(n_i)_{i\geq1}, |n_1|\geq|n_2|\geq|n_3|\geq\cdots$, denote the system \{$n$: $n$ is repeated $k_n+k'_n$ times\}.
\end{lemma}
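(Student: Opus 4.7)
The statement is a quantitative form of Bourgain's near-resonance observation quoted in the introduction: the two largest modes of a near-resonance are controlled by the remaining ones. My plan is to carry this out in three steps.

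First I would pass from the original multiset to the \emph{active} signed sub-multiset. For each $n$ with $k_n\neq k_n'$, attach the sign $\mu_n=\mathrm{sgn}(k_n-k_n')$ with multiplicity $|k_n-k_n'|$; reindexing yields $\{(m_j,\nu_j)\}_{j=1}^{M}$ with $|m_1|\geq|m_2|\geq\cdots$. The left side of (\ref{041809'}) is then exactly $\sum_{j=1}^{M}|m_j|^{\theta/2}$. Hypothesis (\ref{041803'}) becomes $\sum_j\nu_j m_j=0$, and (\ref{041803}) combined with $|\widetilde V_n|\leq 2$ yields $|\sum_j\nu_j m_j^2|\leq 1+2M\leq 1+2N$, where $N=\sum_n(k_n+k_n')\leq 6$ by the fixed-degree structure of the NLS monomials. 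Since the active multiset is a sub-multiset of $\{n_i\}$, one has $|m_j|\leq|n_j|$ for every $j$; combined with the convention $|n|\geq 1$, this controls the tail for free: $\sum_{j\geq 3}|m_j|^{\theta/2}\leq\sum_{j\geq 3}|n_j|^{\theta/2}\leq\sum_{i\geq 3}|n_i|^{\theta}$.

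The crux is to bound $|m_1|^{\theta/2}+|m_2|^{\theta/2}$ via the near-resonance. I would split on the signs $\nu_1,\nu_2$. When $\nu_1=\nu_2$, the quadratic bound gives directly $m_1^2+m_2^2\leq\sum_{j\geq 3}m_j^2+1+2N\leq 4|n_3|^2+13$, whence $|m_j|=O(|n_3|)$. When $\nu_1\neq\nu_2$, the decisive point is that each mode enters the active multiset with a single sign, so $\nu_1\neq\nu_2$ forces $m_1\neq m_2$ as integers, hence $|m_1-m_2|\geq 1$. Then $\nu_1 m_1^2+\nu_2 m_2^2=\pm(m_1-m_2)(m_1+m_2)$ combined with the same quadratic bound gives $|m_1+m_2|\cdot|m_1-m_2|\leq 4|n_3|^2+13$, while the linear relation gives $|m_1-m_2|\leq\sum_{j\geq 3}|m_j|\leq 4|n_3|$. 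Maximizing $\tfrac12(|m_1+m_2|+|m_1-m_2|)$ over these constraints occurs at $|m_1-m_2|=1$ and yields $|m_j|\leq 2|n_3|^2+7\leq 4|n_3|^2$ for $|n_3|\geq 2$.

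Combining with the tail, $\sum_j|m_j|^{\theta/2}\leq 2\cdot 4^{\theta/2}\sum_{i\geq 3}|n_i|^\theta+\sum_{i\geq 3}|n_i|^\theta=(2\cdot 2^\theta+1)\sum_{i\geq 3}|n_i|^\theta$, and an elementary comparison (both sides equal $3$ at $\theta=0$ and the derivative on the right dominates since $\tfrac32\ln 6>\ln 2$) verifies $2\cdot 2^\theta+1\leq 3\cdot 6^{\theta/2}$ on $[0,1]$. The main obstacle I anticipate is the degenerate case $|n_3|=1$, where the additive $1+2N\leq 13$ is no longer absorbed by $4|n_3|^2$. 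I would handle this regime with the crude absolute bound $|m_j|\leq 9$ from the same factorization; since $M,N$ are even (mass conservation $\sum k_n=\sum k_n'$ rules out $M=2$), one has $\sum_{i\geq 3}|n_i|^\theta\geq N-2\geq 2$, leaving ample room in $3\cdot 6^{\theta/2}\sum_{i\geq 3}|n_i|^\theta$ to absorb the $O(1)$ contribution of $|m_1|^{\theta/2}+|m_2|^{\theta/2}$. Packaging both regimes into the single universal constant $3\cdot 6^{\theta/2}$ is the remaining bookkeeping task.
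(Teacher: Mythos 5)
Your strategy is structurally the same as the paper's: both split on the sign pattern $\mu_2/\mu_1$ (your $\nu_1,\nu_2$) and both combine the linear momentum relation with the factorization $n_1^2-n_2^2=(n_1-n_2)(n_1+n_2)$ to control the two largest modes by the rest. Your passage to the active signed sub-multiset $\{(m_j,\nu_j)\}$ is a clean repackaging of the paper's Case 1.1/Case 1.2 dichotomy: since each mode enters the active multiset with a single sign, $\nu_1\neq\nu_2$ automatically gives $m_1\neq m_2$ and hence $|m_1-m_2|\geq 1$, so you never need the paper's separate cancellation case for $n_1=n_2$. That is a genuine, minor simplification.

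There is, however, a real gap. You assert $N=\sum_n(k_n+k_n')\leq 6$ ``by the fixed-degree structure of the NLS monomials'' and use this twice: to bound $\sum_{j\geq 3}m_j^2\leq 4|n_3|^2$ (via $M-2\leq 4$) and to bound the additive term $1+2N\leq 13$. This is unjustified. Lemma~\ref{a1} carries no degree hypothesis, and it is applied in the proof of Lemma~\ref{S6} (see~(\ref{S8}) and~(\ref{M2})) to monomials $\mathcal{M}_{akk'}$ arising from $R_0,R_1$ at every KAM step; after repeated Poisson brackets and symplectic conjugations these monomials have arbitrarily large degree, so $N$ and $M$ are unbounded. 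Once $N$ is large your estimate $m_1^2+m_2^2\leq 4|n_3|^2+1+2N$ is no longer comparable to $|n_3|^2$ and the ``maximize $\tfrac12(|m_1+m_2|+|m_1-m_2|)$'' step, as well as the $|n_3|=1$ patch, breaks down. The paper never treats $2N$ as an absolute constant: it writes
\begin{equation*}
2\sum_{i\geq 1}1+\sum_{i\geq 3}n_i^2+1 \;\leq\; \sum_{i\geq 3}(2+n_i^2)+O(1),
\end{equation*}
that is, it absorbs the degree-dependent count into the very same tail sum $\sum_{i\geq 3}(\cdot)$ it is trying to estimate against, using $|n_i|\geq 1$. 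The fix in your setup is to replace $1+2M$ by $5+2\sum_{j\geq 3}1\leq 5+2\sum_{j\geq 3}m_j^2$, and to bound $\sum_{j\geq 3}m_j^2\leq\sum_{i\geq 3}n_i^2$ directly (rather than via $(M-2)|n_3|^2$); this makes the right-hand side a universal multiple of $\sum_{i\geq 3}|n_i|^2$ independently of degree, after which the rest of your argument — including the separate treatment of $|n_3|=1$ (where $\sum_{i\geq 3}|n_i|^\theta\geq M-2\geq 2$ absorbs the $O(1)$ term) and the elementary comparison $2\cdot 2^\theta+1\leq 3\cdot 6^{\theta/2}$ — runs parallel to the paper's Cases~1.2 and~2 and closes the proof.
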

\begin{proof}
From the definition of $(n_i)_{i\geq1}$ and (\ref{041803'}), there exist $(\mu_i)_{i\geq1}$ with $\mu_i\in\{\pm1\}$ such that
\begin{equation}
\label{1605141}\sum_{n\in\mathbb{Z}}(k_n-k_n')n^2=\sum_{i\geq1}\mu_in_i^2
\end{equation}
\and
\begin{equation}
\label{1605141'}\sum_{n\in\mathbb{Z}}(k_n-k_n')n=\sum_{i\geq1}\mu_in_i=0.
\end{equation}

In view of (\ref{041803}), (\ref{1605141}) and $|\widetilde V_n|\leq 2$,
one has
\begin{equation*}
\left|\sum_{i\geq1}\mu_in_i^2\right|\leq\left|\sum_{n\in\mathbb{Z}}(k_n-k_n')\widetilde V_n\right|+1\leq2\sum_{n\in\mathbb{Z}}(k_n+k_n')+1,
\end{equation*}which implies
\begin{equation}\label{041804}
\left|n_1^2+\left(\frac{\mu_2}{\mu_1}\right)n_2^2\right|\leq2\sum_{i\geq1}1+\sum_{i\geq3}n_i^2+1\leq \sum_{i\geq 3}(2+n_i^2)+3.
\end{equation}
In the other hand, by (\ref{1605141'}), we obtain
\begin{equation}\label{041805}
\left|n_1+\left(\frac{\mu_2}{\mu_1}\right)n_2\right|\leq \sum_{i\geq 3}|n_i|.
\end{equation}

To prove the inequality (\ref{041809'}), we will distinguish two cases:

\textbf{Case. 1.} $\frac{\mu_2}{\mu_1}=-1$.

\textbf{Case. 1.1.} $n_1=n_2$.

Then it is to show that
\begin{equation*}
\sum_{n\in\mathbb{Z}}|k_n-k_n'||n|^{\theta/2}\leq \sum_{i\geq3}|n_i|^{\theta/2}\leq3\cdot 6^{\theta/2}\sum_{i\geq3}|n_i|^{\theta}.
\end{equation*}

\textbf{Case. 1.2.} $n_1\neq n_2$.

Then one has
\begin{eqnarray}\label{041806}
\nonumber|n_1-n_2|+|n_1+n_2|
&\leq&\nonumber|n_1-n_2|+|n_1^2-n_2^2|\\
& \leq&\nonumber\sum_{i\geq 3}|n_i|+\sum_{i\geq 3}(2+n_i^2)+3\quad \mbox{(in view of (\ref{041804}) and (\ref{041805}))}\\
 &\leq&6\sum_{i\geq3}|n_i|^2.
\end{eqnarray}
Hence
\begin{equation*}
\max\{|n_1|,|n_2|\}\leq \max\{|n_1-n_2|,|n_1+n_2|\}\leq 6\sum_{i\geq3}|n_i|^2,
\end{equation*}
where the last inequality is based on (\ref{041806}).
For $j=1,2,$ one has
\begin{equation*}
|n_j|^{\theta/2}\leq 6^{\theta/2}\left(\sum_{i\geq3}|n_i|^{2}\right)^{\theta/2}\leq  6^{\theta/2}\sum_{i\geq3}|n_i|^{\theta},
\end{equation*}
where the last inequality is based on
the fact that the function $|x|^{\theta/2}$ is a concave function for $0<\theta<1$. Therefore,
\begin{equation}\label{041807}
|n_1|^{\theta/2}+|n_2|^{\theta/2}\leq 2 \cdot 6^{\theta/2}\sum_{i\geq3}|n_i|^{\theta}.
\end{equation}
Now one has
\begin{eqnarray}
\nonumber\sum_{n\in\mathbb{Z}}|k_n-k_n'||n|^{\theta/2}
&\leq&\nonumber\sum_{n\in\mathbb{Z}}(k_n+k_n')|n|^{\theta/2}\\
&=&\nonumber\sum_{i\geq1}|n_i|^{\theta/2}\\
&\leq&\nonumber\left(|n_1|^{\theta/2}+|n_2|^{\theta/2}\right)+\sum_{i\geq3}|n_i|^{\theta}\\
&\leq&\nonumber(2 \cdot 6^{\theta/2}+1)\sum_{i\geq3}|n_i|^{\theta}\ \ \mbox{(in view of (\ref{041807}))}\\
&\leq&\label{2016121101}3 \cdot 6^{\theta/2}\sum_{i\geq3}|n_i|^{\theta}.
\end{eqnarray}
\textbf{Case. 2.} $\frac{\mu_2}{\mu_1}=1$.

In view of (\ref{041804}), one has
\begin{equation*}
n_1^2+n_2^2\leq 5\sum_{i\geq3}|n_i|^{2},
\end{equation*}which implies
\begin{equation*}
|n_j|^{\theta/2}\leq 5^{\theta/2}\left(\sum_{i\geq3}|n_i|^{2}\right)^{\theta/2}\leq  5^{\theta/2}\sum_{i\geq3}|n_i|^{\theta}\ \ \mbox{($j=1,2$)}.
\end{equation*}
Therefore,
\begin{equation}\label{041808}
|n_1|^{\theta/2}+|n_2|^{\theta/2}\leq 2 \cdot 5^{\theta/2}\sum_{i\geq3}|n_i|^{\theta}.
\end{equation}
Following the proof of (\ref{2016121101}), we have
\begin{equation*}
\sum_{n\in\mathbb{Z}}|k_n-k_n'||n|^{\theta/2}\leq
3 \cdot 6^{\theta/2}\sum_{i\geq3}|n_i|^{\theta}.
\end{equation*}

\end{proof}
\begin{lemma}\label{a3}
Assuming $\theta,\delta\in(0,1)$, then we have the following inequality
\begin{equation}\label{041809}
\sum_{a\in\mathbb{N}^{\mathbb{Z}}}e^{-\delta\sum_{n\in\mathbb{Z}}a_{n}|n|^{\theta}}\leq\prod_{n\in\mathbb{Z}}\frac{1}{1-e^{-\delta |n|^{\theta}}},
\end{equation}
where $|0|:=1$.
\end{lemma}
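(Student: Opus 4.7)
The plan is to recognize the left-hand side as a product of independent geometric series indexed by $n\in\mathbb{Z}$, and to justify the factorization by a Tonelli-type argument applied to nonnegative summands.

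First I would note that only multi-indices $a=(a_n)_{n\in\mathbb{Z}}\in\mathbb{N}^{\mathbb{Z}}$ with $\sum_{n}a_n|n|^{\theta}<\infty$ contribute nontrivially (otherwise $e^{-\delta\sum_n a_n|n|^{\theta}}=0$), so in particular each contributing $a$ has finite support; alternatively, one can interpret the sum as a supremum over finite truncations. Then the exponential factors as
\begin{equation*}
e^{-\delta\sum_{n\in\mathbb{Z}}a_n|n|^{\theta}}=\prod_{n\in\mathbb{Z}}e^{-\delta a_n|n|^{\theta}},
\end{equation*}
and since all terms are nonnegative, Tonelli's theorem (applied to counting measure on $\mathbb{N}^{\mathbb{Z}}$ expressed as a product of counting measures on $\mathbb{N}$) lets us interchange the total sum with the product:
\begin{equation*}
\sum_{a\in\mathbb{N}^{\mathbb{Z}}}\prod_{n\in\mathbb{Z}}e^{-\delta a_n|n|^{\theta}}=\prod_{n\in\mathbb{Z}}\left(\sum_{a_n=0}^{\infty}e^{-\delta a_n|n|^{\theta}}\right).
\end{equation*}

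Next, for each fixed $n\in\mathbb{Z}$, since $\delta\in(0,1)$ and $|n|\geq 1$ (using the convention $|0|:=1$), the ratio $e^{-\delta|n|^{\theta}}\in(0,1)$, so the inner sum is a convergent geometric series equal to $(1-e^{-\delta|n|^{\theta}})^{-1}$. Multiplying over $n\in\mathbb{Z}$ gives exactly the right-hand side, with equality (not merely inequality).

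The only step requiring mild care is the interchange of the sum with the infinite product; since each summand and each factor is nonnegative, this is immediate from monotone convergence after truncating to $|n|\leq N$ and letting $N\to\infty$. No estimate is actually lost, so the inequality in (\ref{041809}) is in fact an equality; the statement as written is therefore trivially valid. I expect no real obstacle in this proof — it is a bookkeeping exercise, and the ``hard part,'' if any, is only the verification that the infinite product $\prod_{n\in\mathbb{Z}}(1-e^{-\delta|n|^{\theta}})^{-1}$ is finite, which is used elsewhere in the paper (e.g.\ (\ref{0418010})) and does not need to be reproved here.
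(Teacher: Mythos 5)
Your proof is correct and follows the same factorization argument as the paper, which gives exactly the one-line chain $\sum_{a\in\mathbb{N}^{\mathbb{Z}}}e^{-\delta\sum_n a_n|n|^\theta}\leq\prod_n\bigl(\sum_{a_n\in\mathbb{N}}e^{-\delta a_n|n|^\theta}\bigr)=\prod_n(1-e^{-\delta|n|^\theta})^{-1}$. You merely add the (valid) Tonelli/monotone-convergence justification for the interchange, which the paper leaves implicit, and correctly observe that the stated inequality is in fact an equality.
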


\begin{proof}
$\sum\limits_{a\in\mathbb{N}^{\mathbb{Z}}}e^{-\delta\sum_{n\in\mathbb{Z}}a_{n}|n|^{\theta}}\leq \prod\limits_{n\in\mathbb{Z}}\left(\sum\limits_{a_n\in\mathbb{N}}e^{-\delta a_{n}|n|^{\theta}}\right)
=\prod\limits_{n\in\mathbb{Z}}\frac{1}{1-e^{-\delta |n|^{\theta}}}.$
\end{proof}

\begin{lemma}\label{lem2}
Assuming $\theta,\delta\in(0,1)$, then we have
\begin{equation}\label{0418011}
\sum_{n\geq 1}e^{-\delta n^{\theta}}\leq e^{\frac{(\theta-1)}{\theta}}\cdot\frac 2{\theta}\left(\frac{2(1-\theta)}{\theta}\right)^{\frac{1}{\theta}-1}\delta^{-\frac{1}{\theta}}.
\end{equation}
\end{lemma}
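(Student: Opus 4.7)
The plan is to reduce the discrete sum to an integral and then bound that integral via a Gamma-function evaluation followed by a Stirling-type estimate.

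First I would use the monotonicity of $e^{-\delta x^{\theta}}$ on $[0,\infty)$. Since the function is decreasing, the usual comparison $f(n)\leq \int_{n-1}^{n}f(x)\,dx$ summed over $n\geq 1$ yields
\[
\sum_{n\geq 1}e^{-\delta n^{\theta}}\leq \int_{0}^{\infty}e^{-\delta x^{\theta}}\,dx.
\]
Next, the substitution $y=\delta x^{\theta}$ (so $x=(y/\delta)^{1/\theta}$ and $dx=\frac{1}{\theta\,\delta^{1/\theta}}y^{1/\theta-1}\,dy$) converts the integral into a Gamma integral:
\[
\int_{0}^{\infty}e^{-\delta x^{\theta}}\,dx = \frac{1}{\theta\,\delta^{1/\theta}}\Gamma\!\left(\tfrac{1}{\theta}\right).
\]

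The heart of the argument is then an explicit upper bound for $\Gamma(s)$ with $s=1/\theta>1$. I would split the integrand as $y^{s-1}e^{-y}=\bigl(y^{s-1}e^{-y/2}\bigr)e^{-y/2}$ and maximize the first factor. Setting the derivative of $(s-1)\ln y-y/2$ to zero gives the unique critical point $y=2(s-1)$, at which
\[
\sup_{y>0}\,y^{s-1}e^{-y/2}=\bigl(2(s-1)\bigr)^{s-1}e^{-(s-1)}.
\]
Therefore $y^{s-1}e^{-y}\leq \bigl(2(s-1)\bigr)^{s-1}e^{-(s-1)}e^{-y/2}$, and integrating gives
\[
\Gamma(s)\leq 2\bigl(2(s-1)\bigr)^{s-1}e^{-(s-1)}.
\]

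Finally, substituting $s=1/\theta$, so that $s-1=(1-\theta)/\theta$ and $-(s-1)=(\theta-1)/\theta$, and combining with the Gamma representation above yields
\[
\sum_{n\geq 1}e^{-\delta n^{\theta}}\leq \frac{\Gamma(1/\theta)}{\theta\,\delta^{1/\theta}}\leq e^{(\theta-1)/\theta}\cdot\frac{2}{\theta}\Bigl(\frac{2(1-\theta)}{\theta}\Bigr)^{1/\theta-1}\delta^{-1/\theta},
\]
which is exactly (\ref{0418011}). No step is genuinely delicate here; the only point that requires care is verifying that $s=1/\theta>1$ under the hypothesis $\theta\in(0,1)$, so that the maximization argument for $y^{s-1}e^{-y/2}$ makes sense (the critical point $2(s-1)$ is strictly positive). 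Everything else is bookkeeping.
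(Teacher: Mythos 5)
Your proof is correct and takes essentially the same route as the paper: both compare the sum with $\int_0^\infty e^{-\delta x^\theta}\,dx$, rewrite it (after the substitution $u=x^\theta$, equivalently your $y=\delta x^\theta$) as a Gamma-type integral with exponent $1/\theta-1$, and then bound it by extracting an $e^{-\cdot/2}$ factor and maximizing the remaining $y^{1/\theta-1}e^{-y/2}$ at its unique critical point. Your explicit invocation of $\Gamma(1/\theta)$ and full removal of $\delta$ before maximizing is only a cosmetic reorganization of the paper's computation.
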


\begin{proof}
Obviously, we have
\begin{eqnarray}
\label{e}\sum_{n\geq 1}e^{-\delta n^{\theta}} \leq \int_{0}^{+\infty}e^{-\delta x^{\theta}}\mathrm{d}x
=\frac{1}{\theta}\int_{0}^{+\infty}e^{-\delta x}x^{\frac{1}{\theta}-1}\mathrm{d}x.
\end{eqnarray}
Let $f(x)=e^{-\frac{1}{2}\delta x}x^{\frac{1}{\theta}-1}$, and it is easy to prove
\begin{equation}
\max_{x\geq 0}f(x)=f\left(\frac{2(1-\theta)}{\delta\theta}\right)=e^{\frac{\theta-1}{\theta}}\left(\frac{2(1-\theta)}{\delta\theta}\right)^{\frac{1}{\theta}-1}.
\end{equation}
Consequently
\begin{eqnarray*}
\nonumber(\ref{e})&\leq& \frac{1}{\theta}\cdot e^{\frac{\theta-1}{\theta}}\cdot\left(\frac{2(1-\theta)}{\delta\theta}\right)^{\frac{1}{\theta}-1}
\int_{0}^{+\infty}e^{-\frac{\delta}{2}x}\mathrm{d}x\\
&\leq&e^{\frac{(\theta-1)}{\theta}}\cdot\frac 2{\theta}\left(\frac{2(1-\theta)}{\theta}\right)^{\frac{1}{\theta}-1}\delta^{-\frac{1}{\theta}},
\end{eqnarray*}
which finishes the proof of (\ref{0418011}).
\end{proof}

\begin{lemma}\label{a5}
Assuming $\theta\in(0,{1}),\delta\in(0,\frac1e)$, then we have
\begin{equation}\label{0418010}
\prod_{n\in\mathbb{Z}}\frac{1}{1-e^{-\delta |n|^{\theta}}}\leq\left(\frac{1}{\delta}\right)^{C(\theta){\delta^{-\frac{1}{\theta}}}},
\end{equation}
where $|0|:=1$ and $C(\theta)$ is a positive constant depending on $\theta$ only.
\end{lemma}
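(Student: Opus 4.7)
\textbf{Proof proposal for Lemma \ref{a5}.}
The plan is to take logarithms and reduce the claim to the estimate
\[
S(\delta,\theta):=-\sum_{n\in\mathbb{Z}}\ln\!\bigl(1-e^{-\delta|n|^{\theta}}\bigr)\le C(\theta)\,\delta^{-1/\theta}\,\ln(1/\delta).
\]
I will split the sum at a natural threshold $N_0:=\lceil \delta^{-1/\theta}\rceil$, on which $\delta|n|^{\theta}\lesssim 1$ for $|n|\le N_0$ and $\delta|n|^{\theta}\gtrsim 1$ for $|n|>N_0$, and estimate each piece by an elementary inequality.

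For the inner piece $|n|\le N_0$, I will use $1-e^{-x}\ge x/e$ on $x\in(0,1]$ (immediate from $1-e^{-x}=\int_0^x e^{-t}\,dt\ge xe^{-1}$), which gives, with the convention $|0|=1$,
\[
-\ln\!\bigl(1-e^{-\delta|n|^{\theta}}\bigr)\le 1+\ln(1/\delta)+\theta\ln(1/|n|)\le 1+\ln(1/\delta).
\]
Summing over the at most $2N_0+1\le 3\delta^{-1/\theta}$ such indices and using $\delta<1/e$ so that $\ln(1/\delta)>1$, this part contributes $\le C_1(\theta)\,\delta^{-1/\theta}\ln(1/\delta)$.

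For the outer piece $|n|>N_0$, I will use that $e^{-\delta|n|^{\theta}}\le e^{-1}<1/2$ (for $\delta$ small enough, which the hypothesis $\delta<1/e$ together with $N_0\ge 1$ guarantees after a harmless enlargement of the threshold), together with $-\ln(1-y)\le 2y$ for $y\in[0,1/2]$, giving
\[
\sum_{|n|>N_0}-\ln\!\bigl(1-e^{-\delta|n|^{\theta}}\bigr)\le 2\sum_{|n|>N_0}e^{-\delta|n|^{\theta}}\le 4\sum_{n\ge 1}e^{-\delta n^{\theta}}.
\]
By Lemma \ref{lem2} the last sum is bounded by $C_2(\theta)\,\delta^{-1/\theta}$, so this part contributes $\le C_2(\theta)\,\delta^{-1/\theta}\le C_2(\theta)\,\delta^{-1/\theta}\ln(1/\delta)$. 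Adding the two pieces and exponentiating yields the claim with $C(\theta):=C_1(\theta)+C_2(\theta)$.

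The only slightly delicate point is the bookkeeping around the threshold $N_0$: I must make sure that the constant implicit in ``$\delta|n|^{\theta}\ge \ln 2$ for $|n|>N_0$'' is absorbed by enlarging $N_0$ by a $\theta$-dependent factor, and that the convention $|0|=1$ is handled (the $n=0$ term is just a single factor $1/(1-e^{-\delta})\le C/\delta$, which is trivially dominated by the right-hand side). This is the only real obstacle, and it is purely cosmetic once one fixes $N_0:=\lceil(\max\{1,\ln 2\}/\delta)^{1/\theta}\rceil$.
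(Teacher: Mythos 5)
Your proposal is correct and follows essentially the same strategy as the paper's proof: split the product (or the log-sum) at a threshold of order $\delta^{-1/\theta}$, bound each factor in the inner range by a power of $1/\delta$ using an elementary estimate on $1-e^{-x}$ for small $x$, and control the outer tail by the sum $\sum_{n\ge 1}e^{-c\delta n^{\theta}}$ via Lemma \ref{lem2}. The only difference is cosmetic — the paper bounds each inner factor by $\frac{1}{1-e^{-\delta}}\le\delta^{-2}$ and uses $\ln\frac{1}{1-x}\le\sqrt{x}$ on the tail (with threshold $(\ln(3+2\sqrt2))^{1/\theta}\delta^{-1/\theta}$), whereas you use $1-e^{-x}\ge x/e$ inside and $-\ln(1-y)\le 2y$ outside; the small threshold-bookkeeping issue you flag (ensuring $\delta|n|^{\theta}$ stays bounded on the inner range) is real but harmless, as $1-e^{-x}\ge xe^{-2}$ for $x\le 2$ repairs it immediately.
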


\begin{proof}
We write
\begin{eqnarray}
\nonumber\prod\limits_{n\in\mathbb{Z}}\frac{1}{1-e^{-\delta |n|^{\theta}}}
\nonumber&=&\left(\frac{1}{1-e^{-\delta}}\right)\cdot\prod\limits_{n\geq1}\left(\frac{1}{1-e^{-\delta n^{\theta}}}\right)^2\\
\label{c}&=&\left(\frac{1}{1-e^{-\delta}}\right)\cdot\prod\limits_{1\leq n\leq N_{\theta}}\left(\frac{1}{1-e^{-\delta n^{\theta}}}\right)^2\prod\limits_{n> N_\theta}\left(\frac{1}{1-e^{-\delta n^{\theta}}}\right)^2,\end{eqnarray}
where
\begin{equation}\label{ntheta}
N_{\theta}=\left(\ln(3+2\sqrt{2})\right)^{\frac1\theta}{\delta}^{-\frac1\theta}.
\end{equation}

For $\delta\in(0,\frac1e)$, one has
\begin{equation}\label{042506}\frac{1}{1-e^{-\delta}}\leq \frac{1}{\delta^{2}},
\end{equation} and then
\begin{eqnarray}\nonumber\left(\frac{1}{1-e^{-\delta}}\right)\cdot\prod\limits_{1\leq n\leq N_{\theta}}\left(\frac{1}{1-e^{-\delta n^{\theta}}}\right)^2
\nonumber&\leq& \left(\frac{1}{1-e^{-\delta}}\right)\cdot\prod\limits_{1\leq n\leq N_{\theta}}\left(\frac{1}{1-e^{-\delta }}\right)^2\\
\nonumber&=& \left(\frac{1}{1-e^{-\delta}}\right)^{2N_{\theta}+1}\\
\nonumber&\leq& \left(\frac{1}{\delta}\right)^{4N_{\theta}+2} \ \ \mbox{(in view of (\ref{042506}))}\\
\label{a}&\leq&\left(\frac{1}{\delta}\right)^{6\left(\ln(3+2\sqrt{2})\right)^{\frac1\theta}{\delta}^{-\frac1\theta}}
\ \ \mbox{(in view of (\ref{ntheta}))}.
\end{eqnarray}

When $n>N_{\theta}$, one has
\begin{equation}\label{d}
\delta n^{\theta}>\ln(3+2\sqrt{2}).
\end{equation}
Note that if $x\in(0,3-2\sqrt{2})$, we have $$\ln\left(\frac{1}{1-x}\right)\leq\sqrt{x},$$which implies\begin{equation}\label{042601}\ln\left(\frac1{1-e^{-\delta x}}\right)\leq e^{-\frac{1}{2}\delta x}\ \ \mbox{for $\delta x>\ln(3+2\sqrt{2})$}.\end{equation} Hence
\begin{eqnarray}
\nonumber\prod\limits_{n> N_\theta}\left(\frac{1}{1-e^{-\delta n^{\theta}}}\right)^2
&=& \nonumber e^{\sum_{n> N_\theta}2\ln\left(\frac1{1-e^{-\delta n^{\theta}}}\right)}\\
\nonumber&\leq&e^{\sum_{n> N_\theta}{2e^{-\frac{1}{2}\delta n^{\theta}}}}\quad (\mbox{in view of (\ref{d}) and (\ref{042601})})\\
&\leq&\label{b} e^{C_1({\theta})\delta^{-\frac{1}{\theta}}},
\end{eqnarray}where the last inequality is based on Lemma \ref{lem2} and
\begin{equation*}
C_1(\theta)=2^{\frac1\theta}e^{\frac{(\theta-1)}{\theta}}\cdot\frac 4{\theta}\left(\frac{2(1-\theta)}{\theta}\right)^{\frac{1}{\theta}-1}.
\end{equation*}

Recalling $\delta\in(0,\frac1e)$, then the estimate (\ref{0418010}) follows from (\ref{c}), (\ref{a}) and (\ref{b}), where
\begin{equation*}
C(\theta)=C_1(\theta)+6\left(\ln(3+2\sqrt{2})\right)^{\frac1\theta}.
\end{equation*}
\end{proof}
\begin{lemma}\label{8.6}Assuming $f_p(x)=x^pe^{-\delta x}\ (p=1,2)$, then we have
\begin{equation}\label{042805}
\max_{x\geq0}f_p(x)\leq\frac1{\delta^p}.
\end{equation}
\end{lemma}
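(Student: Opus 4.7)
The plan is a routine one-variable optimization; there is essentially nothing subtle to do. First I would compute
\[
f_p'(x) = x^{p-1}e^{-\delta x}(p - \delta x),
\]
and note that since $x^{p-1}e^{-\delta x} > 0$ on $(0,\infty)$, the only interior critical point is $x^\ast = p/\delta$. Combined with the boundary behavior $f_p(0) = 0$ and $\lim_{x \to \infty} f_p(x) = 0$, together with $f_p(x) > 0$ on $(0,\infty)$, this critical point is forced to be the global maximum on $[0,\infty)$.

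Next I would simply evaluate $f_p$ at $x^\ast$, obtaining
\[
\max_{x \geq 0} f_p(x) = f_p(p/\delta) = \left(\frac{p}{\delta}\right)^p e^{-p} = \frac{1}{\delta^p}\left(\frac{p}{e}\right)^p.
\]
The claimed bound then reduces to the elementary inequality $(p/e)^p \leq 1$, i.e.\ $p \leq e$, which holds for both $p = 1$ and $p = 2$ since $2 < e$. Indeed $1/e < 1$ and $4/e^2 < 1$.

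There is no real obstacle: the statement is pure calculus. It is worth noting that the bound as written is not sharp (the sharp constant is $(p/e)^p$), but the clean form $\delta^{-p}$ is exactly what is needed in the earlier arguments, where this estimate is invoked to absorb polynomial weight factors like $\sum_i(n_i^\ast)^\theta$ against exponential decay $e^{-c\delta \sum_i (n_i^\ast)^\theta}$ (for example in the Case 1 computations of the Poisson bracket estimate and in the vector field bound). The cleaner $\delta^{-p}$ form combines transparently with the subsequent infinite-product estimates coming from Lemma 8.5.
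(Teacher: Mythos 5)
Your proof is correct and takes essentially the same approach as the paper: compute $f_p'$, locate the unique interior critical point at $x=p/\delta$, evaluate $f_p$ there, and observe that $(p/e)^p\leq 1$ for $p=1,2$. The extra remark about boundary behavior and non-sharpness is fine but not needed.
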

\begin{proof}
Since $f_p(x)=x^pe^{-\delta x}$, we have
\begin{equation*}
f_p'(x)=px^{p-1}e^{-\delta x}-\delta x^pe^{-\delta x}=(px^{p-1}-\delta x^p)e^{-\delta x}.
\end{equation*}
Hence we get
\begin{equation*}
f_p'\left(\frac p{\delta}\right)=0
\end{equation*}
and it is easy to see
\begin{equation}\label{042608}
\max_{x\geq0}f_p(x)=f_{p}\left(\frac p{\delta}\right)=\frac{p^p}{\delta^p} e^{-p}\leq \frac{1}{\delta^p}\ \mbox{ ($p=1,2$)}.
\end{equation}
\end{proof}
\begin{lemma}Assuming $\theta,\delta\in(0,1)$ and $a=(a_n)_{n\in\mathbb{Z}}\in\mathbb{N}^{\mathbb{Z}}$, then we have
\begin{equation}\label{042807}
\prod_{n\in\mathbb{Z}}\left(1+a_n^p\right)e^{-2\delta a_n|n|^{\theta}}\leq \left(\frac{1}{\delta}\right)^{3p{\delta}^{-\frac{1}{\theta}}},
\end{equation}
where $p=1,2$ and $|0|:=1$.
\begin{proof}
Firstly, we note
$$\prod_{n\in\mathbb{Z}}\left(1+a_n^p\right)e^{-2\delta a_n|n|^{\theta}}=\prod_{n\in\mathbb{Z}:a_n\geq1}\left(1+a_n^p\right)e^{-2\delta a_n|n|^{\theta}},$$
and we can assume $a_n\geq 1\ \mbox{for}\ \forall\  n\in\mathbb{Z}$ in what follows. Thus one has
\begin{eqnarray}
\nonumber(1+a_n^p)e^{-2\delta a_n|n|^{\theta}}
\nonumber&\leq& 2a_n^pe^{-2\delta a_n|n|^{\theta}}\\
\nonumber&\leq&\frac{1}{|n|^{p\theta}}\cdot(2^{\frac1p}a_n|n|^{\theta})^pe^{-\delta2^{\frac1p} a_n|n|^{\theta}}\\
\label{160516}&\leq& \frac{1}{|n|^{p\theta}}\cdot\frac 1{\delta^p},
\end{eqnarray}
where the last inequality is based on (\ref{042805}). Hence, if $|n|\geq \delta^{-1/\theta}$, one has
\begin{equation}\label{042806}
(1+a_n^p)e^{-2\delta a_n|n|^{\theta}}\leq 1.
\end{equation}
Therefore,
\begin{eqnarray*}
\prod_{n\in\mathbb{Z}}\left(1+a_n^p\right)e^{-2\delta a_n|n|^{\theta}}
&=&\left(\prod_{1\leq|n|<\delta^{-1/\theta}}\left(1+a_n^p\right)e^{-2\delta a_n|n|^{\theta}}\right)\left(\prod_{|n|\geq\delta^{-1/\theta}}\left(1+a_n^p\right)e^{-2\delta a_n|n|^{\theta}}\right)\\
&&\mbox{(noting that $|0|:=1$)}\\
&\leq&\prod_{1\leq|n|<\delta^{-1/\theta}}\left(1+a_n^p\right)e^{-2\delta a_n|n|^{\theta}}\quad \mbox{(in view of (\ref{042806}))}\\
&\leq&\prod_{1\leq|n|<\delta^{-1/\theta}}\frac{1}{|n|^{p\theta}\delta^p}\ \ \mbox{(in view of (\ref{160516}))}\\
&\leq&\left(\frac{1}{\delta}\right)^{2p{\delta}^{-\frac{1}{\theta}}+p}\ \ \ \\
&\leq&\left(\frac{1}{\delta}\right)^{3p{\delta}^{-\frac{1}{\theta}}}.
\end{eqnarray*}
\end{proof}
\end{lemma}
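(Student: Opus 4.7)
The plan is to bound each factor of the infinite product by an expression whose logarithm is summable, and then handle the cutoff at $|n|\approx\delta^{-1/\theta}$ separately. First I would reduce to the case when every $a_n\geq 1$: indeed, any index with $a_n=0$ contributes the factor $(1+0)e^0=1$ to the product, so nothing is lost by restricting the product to $\{n:a_n\geq 1\}$.

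Next, the main single-factor estimate. For $a_n\geq 1$ I would use $1+a_n^p\leq 2a_n^p$ and rewrite
\[
(1+a_n^p)e^{-2\delta a_n|n|^\theta}\leq 2a_n^p e^{-\delta\cdot 2^{1/p}a_n|n|^\theta}\cdot e^{-(2-2^{1/p})\delta a_n|n|^\theta}\leq 2a_n^pe^{-\delta\cdot 2^{1/p}a_n|n|^\theta},
\]
since $p\in\{1,2\}$ gives $2^{1/p}\leq 2$ and the discarded factor is $\leq 1$. Now rescaling and invoking Lemma \ref{8.6} (the estimate $x^pe^{-\delta x}\leq\delta^{-p}$),
\[
2a_n^p e^{-\delta\cdot 2^{1/p}a_n|n|^\theta}=\frac{1}{|n|^{p\theta}}\bigl(2^{1/p}a_n|n|^\theta\bigr)^pe^{-\delta(2^{1/p}a_n|n|^\theta)}\leq\frac{1}{|n|^{p\theta}\delta^p}.
\]
This is exactly the single-factor bound (\ref{160516}) used in the displayed computation.

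With this single-factor estimate in hand, I would split the product at the threshold $|n|=\delta^{-1/\theta}$. For $|n|\geq\delta^{-1/\theta}$ one has $|n|^{p\theta}\geq\delta^{-p}$, so the factor is $\leq 1$ and the tail contributes nothing. For the remaining indices $1\leq|n|<\delta^{-1/\theta}$ (together with the single index $n=0$, where the convention $|0|:=1$ makes the bound $1/\delta^p$ still valid), the number of surviving indices is at most $2\delta^{-1/\theta}+1$, each contributing a factor $\leq\delta^{-p}$. Thus the whole product is bounded by $\delta^{-p(2\delta^{-1/\theta}+1)}\leq(1/\delta)^{3p\delta^{-1/\theta}}$ whenever $\delta\in(0,1)$ (so that $\delta^{-1/\theta}\geq 1$, making the linear term $p$ absorbable into the exponent).

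There is no real obstacle here; the only point requiring any care is the reduction $1+a_n^p\leq 2a_n^p$ (which forces the restriction to $a_n\geq 1$) and the correct bookkeeping of the exponent after the cutoff. Both are routine. The result follows from (\ref{042805}) in Lemma \ref{8.6} together with the cutoff argument just described.
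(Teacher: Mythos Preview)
Your proof is correct and follows essentially the same approach as the paper's: reduce to $a_n\geq 1$, derive the single-factor bound $(1+a_n^p)e^{-2\delta a_n|n|^\theta}\leq |n|^{-p\theta}\delta^{-p}$ via Lemma~\ref{8.6} after the rescaling $x=2^{1/p}a_n|n|^\theta$, split the product at $|n|=\delta^{-1/\theta}$, and count the surviving factors. The only cosmetic difference is that you make explicit the discarded factor $e^{-(2-2^{1/p})\delta a_n|n|^\theta}\leq 1$, which the paper leaves implicit in its chain of inequalities.
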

\begin{lemma}Let $a,k,k'\in\mathbb{N}^{\mathbb{Z}},\ \theta\in(0,1)$, and $0<\delta\ll1$ (depending only on $\theta$). Let further
$(n_i)_{i\geq1}, |n_1|\geq|n_2|\geq|n_3|\geq\cdots$, denote the system \{$|n|$: $n$ is repeated $2a_n+k_n+k'_n$ times\}. Then we have
\begin{equation}\label{201606031}
\prod_{|m|\leq|n_1|}(1+l_m^2)e^{-\delta\sum_{|m|\leq|n_3|}l_m|m|^{\theta}}
\leq\left(\frac{1}{\delta}\right)^{C({\theta}){\delta^{-\frac{1}{\theta}}}},
\end{equation}
where
\begin{equation*}
l_n=\#\{j:n=n_j\},
\end{equation*}
and $C(\theta)$ is a positive constant depending only on $\theta$.

\begin{proof}
To prove (\ref{201606031}), we distinguish three cases:

\textbf{Case 1}. $|n_1|=|n_2|=|n_3|$. In this case, we have
\begin{eqnarray}
\nonumber\prod_{|m|\leq|n_1|}(1+l_m^2)e^{-\delta\sum_{|m|\leq|n_3|}l_m|m|^{\theta}}
\nonumber&=&\prod_{|m|\leq|n_1|}(1+l_m^2)e^{-\delta\sum_{i\geq3}|n_i|^{\theta}}\\
\nonumber&\leq&\prod_{|m|\leq|n_1|}(1+l_m^2)e^{-\frac13\delta\sum_{i\geq1}|n_i|^{\theta}}\\
\nonumber&=&\prod_{|m|\leq|n_1|}\left((1+l_m^2)e^{-\frac13\delta l_m|m|^{\theta}}\right)\\
\nonumber&\leq&\left(\frac{6}{\delta}\right)^{6{\left(\frac{6}\delta\right)^{-\frac{1}{\theta}}}}\qquad (\mbox{in view of (\ref{042807})})\\
\label{062101}&\leq&\left(\frac{1}{\delta}\right)^{C_1({\theta}){\delta^{-\frac{1}{\theta}}}},
\end{eqnarray}
where the last inequality relies on $0<\delta\ll 1$ and $C_1(\theta)$ is a positive constant depending on $\theta$ only.

\textbf{Case 2}. $|n_1|>|n_2|=|n_3|$. In this case, $l_{n_1}=1$. Hence, we have
\begin{eqnarray}
\nonumber\prod_{|m|\leq|n_1|}(1+l_m^2)e^{-\delta\sum_{|m|\leq|n_3|}l_m|m|^{\theta}}
\nonumber&=&2\cdot\prod_{|m|\leq|n_2|}(1+l_m^2)e^{-\delta\sum_{i\geq3}|n_i|^{\theta}}\\
\nonumber&\leq&2\cdot\prod_{|m|\leq|n_2|}(1+l_m^2)e^{-\frac{1}{2}\delta\sum_{i\geq2}|n_i|^{\theta}}\\
\nonumber&=&2\cdot\prod_{|m|\leq|n_2|}\left((1+l_m^2)e^{-\frac12\delta l_m|m|^{\theta}}\right)\\
\label{062102}&\leq&\left(\frac{1}{\delta}\right)^{C_2({\theta}){\delta^{-\frac{1}{\theta}}}},
\end{eqnarray}
where the last inequality follows form the proof of (\ref{062101}) and $C_2(\theta)$ is a positive constant depending on $\theta$ only.

\textbf{Case 3}. $|n_1|\geq|n_2|>|n_3|$. In this case, $l_{m}\leq2$ for $m\in\{n_1,n_2\}$. Thus, we have
\begin{eqnarray}
\nonumber\prod_{|m|\leq|n_1|}(1+l_m^2)e^{-\delta\sum_{|m|\leq|n_3|}l_m|m|^{\theta}}
\nonumber&=&\prod_{m\in\{n_1,n_2\}}(1+l_m^2)\prod_{|m|\leq|n_3|}\left((1+l_m^2)e^{-\delta l_m|m|^\theta}\right)\\
\nonumber&\leq&5\cdot\prod_{|m|\leq|n_3|}\left((1+l_m^2)e^{-\delta l_m|m|^\theta}\right)\\
\label{062103}&\leq&\left(\frac{1}{\delta}\right)^{C_3({\theta}){\delta^{-\frac{1}{\theta}}}},
\end{eqnarray}
where the last inequality follows form the proof of (\ref{062101}) and $C_3(\theta)$ is a positive constant depending on $\theta$ only.

In view of (\ref{062101})-(\ref{062103}), we finished the proof of (\ref{201606031}).
\end{proof}
\end{lemma}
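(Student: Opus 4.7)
The inequality is awkward because the range of the product ($|m|\leq|n_1|$) is strictly larger than the range of the exponential penalty ($|m|\leq|n_3|$). Any $m$ with $|n_3|<|m|\leq|n_1|$ contributes a factor $(1+l_m^2)$ to the product that is \emph{not} compensated by exponential decay, so such factors must be controlled by a purely combinatorial argument on the multiplicities $l_m$. My plan is to split on the relative sizes of $n_1,n_2,n_3$ and, in each case, reduce the estimate to Lemma~\ref{042807} (applied with $p=2$, after rescaling $\delta$).

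The key observation that makes the case split work is monotonicity of the multiset: since $|n_1|\geq|n_2|\geq|n_3|\geq\cdots$, if $|n_1|>|n_3|$ then $l_{n_1}\leq 2$, and if $|n_2|>|n_3|$ then $l_{n_2}\leq 2$ (otherwise a third copy of $n_1$ or $n_2$ would force $|n_3|=|n_2|$). Therefore the ``uncompensated'' factors are always bounded by a fixed combinatorial constant.

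\medskip
\noindent\textbf{Case 1: $|n_1|=|n_2|=|n_3|$.} Every $n_i$ satisfies $|n_i|\leq|n_3|$, so
\[
\sum_{|m|\leq|n_3|}l_m|m|^{\theta}=\sum_{m}l_m|m|^{\theta}=\sum_{i\geq1}|n_i|^{\theta}.
\]
The full product collapses to $\prod_{m}(1+l_m^2)e^{-\delta l_m|m|^{\theta}}$, and Lemma~\ref{042807} (with $p=2$ and $\delta\to\delta/2$) gives the target bound $(1/\delta)^{C(\theta)\delta^{-1/\theta}}$.

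\medskip
\noindent\textbf{Case 2: $|n_1|>|n_2|=|n_3|$.} Here $l_{n_1}=1$, so $(1+l_{n_1}^2)=2$ can be absorbed into a constant. Using $|n_2|=|n_3|$, one has
$\sum_{i\geq 3}|n_i|^{\theta}\geq \tfrac12\sum_{i\geq 2}|n_i|^{\theta}=\tfrac12\sum_{|m|\leq|n_2|}l_m|m|^{\theta}.$
After replacing $\delta$ by $\delta/2$ in the exponent, the remaining product $\prod_{|m|\leq|n_2|}(1+l_m^2)\exp(-(\delta/2)l_m|m|^{\theta})$ fits Lemma~\ref{042807} exactly.

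\medskip
\noindent\textbf{Case 3: $|n_1|\geq|n_2|>|n_3|$.} Now both $l_{n_1},l_{n_2}\leq 2$, so
$(1+l_{n_1}^2)(1+l_{n_2}^2)\leq 25.$
Pulling out this constant factor leaves $\prod_{|m|\leq|n_3|}(1+l_m^2)e^{-\delta l_m|m|^{\theta}}$, the product range now matching the exponent range. Lemma~\ref{042807} again closes the bound.

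\medskip
In every case the resulting constant is of the form $(1/\delta)^{C_i(\theta)\delta^{-1/\theta}}$ with $C_i(\theta)$ depending only on $\theta$; taking $C(\theta)=\max_i C_i(\theta)$ (and using $0<\delta\ll1$ to absorb prefactors like $25$) gives the stated bound.

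\medskip
\noindent\textbf{Main obstacle.} The only subtle point is bookkeeping the multiplicity restriction on $n_1,n_2$ in each sub-case and, in Case~2, losing a factor of $\tfrac12$ in the effective $\delta$ so that the sum $\sum_{i\geq 3}|n_i|^{\theta}$ dominates the full weighted sum $\sum_{|m|\leq|n_2|}l_m|m|^{\theta}$ up to a constant. No dynamical or analytic input is needed beyond Lemma~\ref{042807}; the argument is essentially a pigeonhole on the sorted multiset $(n_i)$.
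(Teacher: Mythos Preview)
Your proposal is correct and follows essentially the same approach as the paper: the identical three-case split on the relative sizes of $|n_1|,|n_2|,|n_3|$, the same multiplicity observations ($l_{n_1}=1$ in Case~2, $l_{n_1},l_{n_2}\leq 2$ in Case~3), and the same reduction to Lemma~\ref{042807} after rescaling $\delta$. Your Case~1 is in fact slightly cleaner than the paper's (you recognize the exponent equals the full sum $\sum_{i\geq1}|n_i|^{\theta}$ directly, whereas the paper detours through $\sum_{i\geq3}$ and then recovers a factor $\tfrac13$), and your constant $25$ in Case~3 is looser than the paper's $5$ but harmless.
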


\textbf{The proof of Lemma \ref{051301}}
\begin{proof}
Firstly, we will prove the inequality (\ref{N6}).
Write $\mathcal{M}_{akk'}$ in the form of
\begin{equation*}
\mathcal{M}_{akk'}=\mathcal{M}_{abll'}=\prod_nI_n(0)^{a_n}I_n^{b_n}q_n^{l_n}{\bar q_n}^{l_n'}
\end{equation*}
where
\begin{equation*}
b_n=k_n\wedge k_n',\quad l_n=k_n-b_n,\quad l_n'=k_n'-b_n'
\end{equation*}
and
$l_nl_n'=0$ for all $n$.

Express the term
\begin{equation*}
\prod_nI_n^{b_n}=\prod_n(I_n(0)+J_n)^{b_n}
\end{equation*}by the monomials of the form
\begin{equation*}
\prod_nI_n(0)^{b_n},
\end{equation*}
\begin{equation*}
\sum_{m,b_m\geq 1}\left(I_m(0)^{b_m-1}J_m\right)\left(\sum_{n\neq m}\prod_nI_n(0)^{b_n}\right),
\end{equation*}
\begin{equation*}
\sum_{m,b_m\geq2\atop
r\leq b_m-2}\left(\sum_{n< m}\prod_nI_n(0)^{b_n}\right)\left(b_m(b_m-1)I_m(0)^{r}J_m^2I_m^{b_m-r-2}\right)\left(\sum_{n> m}\prod_nI_n^{b_n}\right),
\end{equation*}
and
\begin{eqnarray*}
&&\sum_{m_1< m_2,b_{m_1},b_{m_2}\geq 1\atop
r\leq b_{m_2}-1}\left(\sum_{n< m_1}\prod_nI_n(0)^{b_n}\right)\left(b_{m_1}I_{m_1}(0)^{b_{m_1}-1}J_{m_1}\right)
\\
&&\nonumber\times\left(\sum_{m_1<n< m_2}\prod_nI_n^{b_n}\right)\left(b_{m_1}I_{m_1}(0)^{r}J_{m_1}I_{m_1}^{b_{m_1}-r-1}\right)\left(\sum_{n> m_2}\prod_nI_n^{b_n}\right).
\end{eqnarray*}
Now we will estimate the bounds for the coefficients respectively. Consider the term
$\mathcal{M}_{akk'}=\prod_nI_n(0)^{a_n}q_n^{k_n}\bar q_n^{k_n'}$ with fixed $a,k,k'$ satisfying $k_nk_n'=0$ for all $n$. It is easy to see that $\mathcal{M}_{akk'}$ comes from some parts of the terms $\mathcal{M}_{\alpha\kappa\kappa'}$ with no assumption for $\kappa$ and $\kappa'$. For any given $n$ one has
\begin{equation*}
I_n(0)^{a_n}q_n^{k_n}\bar q_n^{k_n'}=\sum_{\beta_n=k_n\wedge k_n'}I_n(0)^{\alpha_n+\beta_n}q_n^{\kappa_n-\beta_n}\bar q_n^{\kappa_n'-\beta_n}.
\end{equation*}
Hence,
\begin{equation}\label{N2}
\alpha_n+\beta_n=a_n,
\end{equation}
and
\begin{equation}\label{N3}
\kappa_n-\beta_n=k_n,\qquad \kappa_n'-\beta_n=k_n'.
\end{equation}
Therefore, if $0\leq\alpha_n\leq a_n$ is chosen, so $\beta_n,k_n,k_n'$ are determined.
On the other hand,
\begin{eqnarray}
\nonumber|B_{\alpha\kappa\kappa'}|
&\leq&\nonumber ||R||_{\rho}e^{\rho\left(\sum_{n}(2\alpha_n+\kappa_n+\kappa_n')|n|^{\theta}-2(n_1^*)^{\theta}\right)}\qquad\qquad\qquad\qquad\ \\&&\nonumber{(\mbox{in view of (\ref{042602})})}\\
&=&\nonumber||R||_{\rho}e^{\rho\left(\sum_{n}(2\alpha_n+(k_n+a_n-\alpha_n)+(k_n'+a_n-\alpha_n))|n|^{\theta}-2(n_1^*)^{\theta}\right)}\quad\\
&&\nonumber{(\mbox{in view of (\ref{N2}) and (\ref{N3})})}\\
&=&\nonumber||R||_{\rho}e^{\rho\left(\sum_{n}(2a_n+k_n+k_n')|n|^{\theta}-2(n_1^*)^{\theta}\right)}.
\end{eqnarray}
Hence,
\begin{equation}\label{N4}
|B_{akk'}|\leq||R||_{\rho}\prod_n(1+a_n)e^{\rho\left(\sum_{n}(2a_n+k_n+k_n')|n|^{\theta}-2(n_1^*)^{\theta}\right)}.
\end{equation}
Similarly,
\begin{eqnarray*}
|B_{akk'}^{(m)}|&\leq& ||R||_{\rho}\left(\prod_{n\neq m}(1+a_n)\right)(1+a_m)^2e^{\rho\left(\sum_{n}(2a_n+k_n+k_n')|n|^{\theta}+2|m|^{\theta}-2(n_1^*)^{\theta}\right)},\\
|B_{akk'}^{(m,m)}|&\leq& ||R||_{\rho}\left(\prod_{n\neq m}(1+a_n)\right)(1+a_m)^3e^{\rho\left(\sum_{n}(2a_n+k_n+k_n')|n|^{\theta}+4|m|^{\theta}-2(n_1^*)^{\theta}\right)},\\
|B_{akk'}^{(m_1,m_2)}|&\leq& ||R||_{\rho}\left(\prod_{n<m_1}(1+a_n)\right)(1+a_{m_1})^2\left(\prod_{m_1<n<m_2 }(1+a_n)\right)\\&&\times (1+a_{m_2})^2e^{\rho\left(\sum_{n}(2a_n+k_n+k_n')|n|^{\theta}+2|m_1|^{\theta}+2|m_2|^{\theta}-2(n_1^*)^{\theta}\right)}.
\end{eqnarray*}

In view of (\ref{051302}) and (\ref{N4}), we have
\begin{eqnarray}\label{N5}
||R_0||_{\rho+\delta}^{+}
\leq||R||_{\rho}\prod_n(1+a_n)e^{-\delta\left(\sum_{n}(2a_n+k_n+k_n')|n|^{\theta}-2(n_1^*)^{\theta}\right)}.
\end{eqnarray}
Now we will show that
\begin{eqnarray}
\label{051303}\prod_n(1+a_n)e^{-\delta\left(\sum_{n}(2a_n+k_n+k_n')|n|^{\theta}-2(n_1^*)^{\theta}\right)}&\leq& \left(\frac{1}{\delta}\right)^{ C(\theta)\delta^{-\frac{1}{\theta}}},
\end{eqnarray}
where $C(\theta)$ is a positive constant depending only on $\theta$.

\textbf{Case 1.} $n_1^*=n_3^*.$
Then one has
\begin{eqnarray*}
(\ref{051303})
&=&\nonumber \prod_n(1+a_n)e^{-{\delta}\sum_{i\geq3}|n_i|^{\theta}}\\
&\leq&\nonumber\prod_n(1+a_n)e^{-\frac{\delta}{3}\sum_{i\geq1}|n_i|^{\theta}}\\
&=&\nonumber\prod_n(1+a_n)e^{-\frac\delta3\sum_{n}(2a_n+k_n+k_n')|n|^{\theta}}\\
&\leq&\nonumber\prod_n\left((1+a_n)e^{-\frac{2\delta}{3}a_n|n|^\theta}\right)\\
&\leq&\left(\frac{1}{\delta}\right)^{C(\theta){\delta}^{-\frac{1}{\theta}}}\ \ \mbox{(in view of (\ref{042807}))}.
\end{eqnarray*}

\textbf{Case 2.} $n_1^*>n_2^*=n_3^*.$ In this case, $a_{n}=1$ for $n=n_1$.
Then we have
\begin{eqnarray*}
(\ref{051303})
&=&(1+a_{n_1})\left(\prod_{|n|\leq n_2^*}(1+a_n)e^{-(2-2^\theta)\delta\sum_{i\geq3}(n_i^*)^{\theta}}\right)
\\
&\leq&2\cdot\prod_{|n|\leq n_2^*}(1+a_n)e^{-\frac12(2-2^\theta)\delta\sum_{i\geq2}(n_i^*)^{\theta}}\\
&=&\nonumber2\cdot\prod_{|n|\leq n_2^*}(1+a_n)e^{-\frac12(2-2^\theta)\delta\sum_{|n|\leq n_2^*}(2a_n+k_n+k_n')|n|^{\theta}}\\
&\leq&2\cdot\prod_{|n|\leq n_2^*}\left( (1+a_n)e^{-(2-2^\theta)\delta a_n|n|^\theta}\right)\\
&\leq&\left(\frac{1}{\delta}\right)^{C(\theta){\delta}^{-\frac{1}{\theta}}}\ \ \mbox{(in view of (\ref{042807}))}.
\end{eqnarray*}

\textbf{Case 3.} $n_2^*>n_3^*.$ In this case, $a_{n}\leq2$ for $n\in\{n_1, n_2\}$.
 Hence
\begin{eqnarray*}
(\ref{051303})
&\leq&\left(\prod_{n\in \{n_1,n_2\}}(1+a_n)\right)\left(\prod_{|n|\leq n_3^*}(1+a_n)e^{-\delta\sum_{i\geq3}(n_i^*)^{\theta}}\right)
\\
&\leq&\nonumber2^2\cdot\prod_{|n|\leq n_3^*}(1+a_n)e^{-\delta\sum_{|n|\leq n_3^*}(2a_n+k_n+k_n')|n|^{\theta}}\\
&\leq&2^2\cdot\prod_{|n|\leq n_3^*}\left( (1+a_n)e^{-2\delta a_n|n|^\theta}\right)\\
&\leq&\left(\frac{1}{\delta}\right)^{C(\theta){\delta}^{-\frac{1}{\theta}}}\ \ \mbox{(in view of (\ref{042807}))}.
\end{eqnarray*}

We finished the proof of ({\ref{051303}}).

Similarly, one has
\begin{eqnarray*}
&&||R_1||_{\rho+\delta}^{+},||R_2||_{\rho+\delta}^{+}\leq\left(\frac{1}{\delta}\right)^{C(\theta)\delta^{-\frac{1}{\theta}}},
\end{eqnarray*}
and hence
\begin{equation*}
||R||_{\rho+\delta}^{+}\leq\left(\frac{1}{\delta}\right)^{ C(\theta)\delta^{-\frac{1}{\theta}}}||R||_{\rho}.
\end{equation*}

On the other hand, the coefficient of $\mathcal{M}_{abll'}$ increases by at most a factor $(\sum_{n}a_n+b_n)^2$, then
\begin{eqnarray}
\nonumber||R||_{\rho+\delta}
&\leq&\nonumber||R||_{\rho}^{+}\left(\sum_{n}a_n+b_n\right)^2e^{-\delta(\sum_{n}(2a_n+k_n+k_n')|n|^{\theta}-2(n_1^*)^{\theta})}\\
\label{4.24}&\leq&||R||_{\rho}^{+}\left(2\sum_{i\geq 3}(n_i^*)^{\theta}\right)^2 e^{-\delta(2-2^\theta)\sum_{i\geq3}(n_i^*)^{\theta}}\quad (\mbox{in view of (\ref{042605})})\\
&\leq&\nonumber\frac{16}{(2-2^{\theta})^2\delta^2}||R||_{\rho}^{+},
\end{eqnarray}
where the last inequality is based on Lemma \ref{8.6} with $p=2$.
\end{proof}

\textbf{The proof of Lemma \ref{062808}}
\begin{proof}
\textbf{Step 1. The derivative of the homological equation}

Let
\begin{equation*}
F_{s}=\sum_{|l|=s}J^{l}\sum_{a,k,k'}F_{s;akk'}^{(l)}\mathcal{M}_{akk'},
\end{equation*}
and let $\Psi_{s}=X_{F_{s}}^t|_{t=1}$ be the time-1 map of the Hamiltonian vector field $X_{F_{s}}$.

Using Taylor's formula,
\begin{eqnarray}
\nonumber H_{{s}+1}&:=&H_{s}\circ X_{F_{s}}^t|_{t=1}\\
&=&\nonumber(N_*+Z_{s}+Q_{s})\circ X_{F_{s}}^t|_{t=1}\\
&=&\nonumber N_*+\{N_*,F_{s}\}+\sum_{n\geq2}\frac{1}{n!}\underbrace{\{\cdots\{N_*,{F_{s}}\},{F_{s}},\cdots,{F_{s}}\}}_{n-\mbox{fold}}\\
&&\nonumber+Q_{ss}+\sum_{n\geq1}\frac{1}{n!}\underbrace{\{\cdots\{Q_{ss},{F_{s}}\},{F_{s}},\cdots,{F_{s}}\}}_{n-\mbox{fold}}\\
&&\nonumber+\sum_{n\geq0}\frac{1}{n!}\underbrace{\{\cdots\{Z_{s}+Q_{s}-Q_{ss},{F_{s}}\},{F_{s}},\cdots,{F_{s}}\}}_{n-\mbox{fold}}.
\end{eqnarray}
Now we obtain the homological equation
\begin{equation}\label{060201}
\{N_*,F_{s}\}+Q_{ss}={Z_{ss}},
\end{equation}
where
\begin{equation*}
{Z_{ss}}=\sum_{|l|=s}J^{l}\sum_{a,k,k'\atop k=k'}Q_{s;akk'}^{(l)}\mathcal{M}_{akk'}.
\end{equation*}
If the homological equation (\ref{060201}) is solvable, then we define
\begin{equation}\label{060302}
Z_{s+1}=Z_{s}+{Z_{ss}},
\end{equation}
and
\begin{eqnarray}
\nonumber Q_{s+1}
&=&\label{060701}\sum_{n\geq2}\frac{1}{n!}\underbrace{\{\cdots\{N_*,{F_{s}}\},{F_{s}},\cdots,{F_{s}}\}}_{n-\mbox{fold}}\\
&&\label{060702}+\sum_{n\geq1}\frac{1}{n!}\underbrace{\{\cdots\{Q_{ss},{F_{s}}\},{F_{s}},\cdots,{F_{s}}\}}_{n-\mbox{fold}}\\
&&\label{060703}+\sum_{n\geq0}\frac{1}{n!}\underbrace{\{\cdots\{Z_{s}+Q_{s}-Q_{ss},{F_{s}}\},{F_{s}},\cdots,{F_{s}}\}}_{n-\mbox{fold}},
\end{eqnarray}
and one has
\begin{equation*}
H_{s+1}=N_*+Z_{s+1}+Q_{s+1}.
\end{equation*}

\textbf{Step 2. The solution of the homological equation (\ref{060201}).}
It is easy to show that the solution of the homological equation is given by
\begin{equation*}
F_{s;akk'}^{(l)}=\frac{Q_{s;akk'}^{(l)}}{\sum_n(k_n-k'_n)(n^2+\omega_n)}.
\end{equation*}
In view of the fact that $\omega$ is Diophantine and following the proof of Lemma \ref{S6}, one has
\begin{eqnarray}
||F_{s}||_{\rho_{s}+\delta}&\leq&\nonumber C_1(\delta,\theta,\gamma)\cdot||Q_{ss}||_{\rho_{s}}\\
&\leq& \label{062805} C_1(\delta,\theta,\gamma)(C(\delta,\theta,\gamma))^{(s-2)s},
\end{eqnarray}
where the last inequality is based on (\ref{062801}) for $j=s$.

\textbf{Step 3. Estimate the remainder terms $Z_{s+1}$ and $Q_{s+1}$.}

Following the notation as (\ref{060301}), rewrite $Z_{s+1}$ as
\begin{equation*}
Z_{s+1}=\sum_{3\leq j\leq s}Z_{(s+1)j},
\end{equation*}
where
\begin{equation*}
Z_{(s+1)j}=\sum_{|l|=j}J^{l}\sum_{a,k,k'\atop k=k'}Z_{s+1;akk'}^{(l)}\mathcal{M}_{akk'}.
\end{equation*}
In view of (\ref{060301}) and (\ref{060302}), one has
\begin{equation*}
Z_{(s+1)j}=Z_{sj},\qquad 3\leq j\leq s.
\end{equation*}
For $3\leq j\leq s-1$, one has
\begin{eqnarray}
\nonumber||Z_{(s+1)j}||_{\rho_{s+1}}
&=&\nonumber
||Z_{sj}||_{\rho_{s+1}}\\
&\leq&\nonumber
||Z_{sj}||_{\rho_{s}}\\
&\leq&\nonumber(C(\delta,\theta,\gamma))^{(s-2)j}\\
&\leq&\label{062802}(C(\delta,\theta,\gamma))^{(s-1)j}.
\end{eqnarray}
In view of (\ref{062802}) and (\ref{062803}), we finish the proof of
When $j=s$, one has
\begin{eqnarray}
\nonumber||Z_{(s+1)s}||_{\rho_{s+1}}
&=&\nonumber
||Q_{ss}||_{\rho_{s+1}}\\
&\leq&\nonumber
||Q_{ss}||_{\rho_{s}}\\
&\leq&\nonumber(C(\delta,\theta,\gamma))^{(s-2)j}\\
&\leq&\label{062803}(C(\delta,\theta,\gamma))^{(s-1)j}.
\end{eqnarray}
In view of (\ref{062802}) and (\ref{062803}), we finish the proof of (\ref{062804}).

Now we would like to estimate the norm of (\ref{060701})-(\ref{060703}).
Without loss of generality, we only consider the following term
\begin{equation*}
\frac{1}{n!}\underbrace{\{\cdots\{Q_{ss},{F_{s}}\},{F_{s}},\cdots,{F_{s}}\}}_{n-\mbox{fold}},
\end{equation*}
which is in (\ref{060702}) and contains at least $s+n(s-1)$ $J's$. Then

\begin{eqnarray}
\nonumber\left|\left|\frac{1}{n!}\underbrace{\{\cdots\{Q_{ss},{F_{s}}\},{F_{s}},\cdots,{F_{s}}\}}_{n-\mbox{fold}}\right|\right|_{\rho_{s+1}}
&=&\nonumber\left|\left|\frac{1}{n!}\underbrace{\{\cdots\{Q_{ss},{F_{s}}\},{F_{s}},\cdots,{F_{s}}\}}_{n-\mbox{fold}}\right|\right|_{\rho_{s}+2\delta}\\
&\leq&\nonumber\frac{1}{n!}\left(C_2(\delta,\theta)
||F_{s}||_{\rho_{s}+\delta}\right)^n\left(\frac{n}{\delta}\right)^n||Q_{ss}||_{\rho_{s}+\delta}\\
&&\nonumber(\mbox{following the proof of (\ref{3.3})})\\
&\leq&\nonumber\frac{1}{n!}\left(C_2(\delta,\theta)\cdot C_1(\delta,\theta,\gamma)\right)^n\left(\frac{n}{\delta}\right)^n||Q_{ss}||_{\rho_{s}}^{n+1}\\
&&(\mbox{in view of the first inequality in (\ref{062805})})\nonumber\\
&\leq&\nonumber\left(\frac{e}{\delta}\cdot C_2(\delta,\theta)\cdot C_1(\delta,\theta,\gamma)\right)^n||Q_{ss}||_{\rho_{s}}^{n+1}\qquad \\
&&(\mbox{in view of $n^n/n!\leq e^n$})\\
&\leq&\nonumber\left( C(\delta,\theta,\gamma)\right)^n \left((C(\delta,\theta,\gamma))^{(s-2)s}\right)^{n+1}\\
&&(\mbox{in view of (\ref{062806}) and (\ref{062801}) for $j=s$})\nonumber\\
&=&\nonumber(C(\delta,\theta,\gamma))^{(s-2)s(n+1)+n}\\
&\leq&(C(\delta,\theta,\gamma))^{(s-1)(s+n(s-1))},
\end{eqnarray}
where the last inequality is based on
\begin{equation*}
(s-2)s(n+1)+n\leq (s-1)(s+n(s-1))
\end{equation*}for $s\geq 2$ and any $n\geq0$. Hence, we finish the proof of (\ref{062807}).
\end{proof}

\bibliographystyle{amsplain}

\begin{thebibliography}{10}





\bibitem{A} V. I. Arnold, \textit{Small denominators, 1: Mappings of the circumference onto itself}, AMS Translations, \textbf{46} (1965), 213-288 (Russian original published in 1961).

    \bibitem{Baldi} P. Baldi, M. Berti and R. Montalto,  \textit{KAM for quasi-linear and fully nonlinear forced perturbations of Airy equation}, Math. Ann. \textbf{359} (2014), no. 1-2, 471-536.

\bibitem{Baldi2} P. Baldi, M. Berti and R. Montalto, \textit{KAM for autonomous quasi-linear perturbations of KdV}, Ann. Inst. H. Poincare Anal. Non Lineaire, \textbf{33} (2016), no. 6, 1589-1638.

\bibitem{BG93} D. Bambusi and A. Giorgilli, \textit{Exponential stability of states close to resonance in infinite-dimensional Hamiltonian systems}, J. Statist. Phys. \textbf{71} (1993), no. 3-4, 569-606.

\bibitem{Bam1} D. Bambusi, \textit{Birkhoff normal form for some nonlinear PDEs}, Comm. Math. Phys. \textbf{234} (2003), no. 2, 253-285.

    \bibitem{BG} D. Bambusi and B. Gr$\acute{\mbox{e}}$bert, \textit{Birkhoff normal form for partial differential equations with tame modulus} Duke Math. J. \textbf{135} (2006), no. 3, 507-567.
\bibitem{BDGS} D. Bambusi, J. M. Delort, B. Gr$\acute{\mbox{e}}$bert and J. Szeftel, \textit{Almost global existence
for Hamiltonian semilinear Klein-Gordon equations with small Cauchy data on Zoll manifolds}, Comm. Pure Appl. Math. \textbf{60} (2007), no. 11, 1665-1690.

\bibitem{Bam3} D. Bambusi, \textit{A Birkhoff normal form theorem for some semilinear PDEs}, Hamiltonian dynamical systems and applications. (2008), 213-247.









\bibitem{BN} D. Bambusi and N. N. Nekhoroshev, \textit{A property of exponential stability in nonlinear wave equations near the fundamental linear
mode}, Phys. D. \textbf{122} (1998), no. 1-4, 73-104.

\bibitem{BFG} G. Benettin, J. Fr${\ddot {\mbox o}}$hlich and A. Giorgili, \textit{A Nekhoroshev-type theorem for Hamiltonian systems with infinitely many degrees of freedom}, Comm. Math. Phys. \textbf{119} (1988), no. 1, 95-108.

\bibitem{BerB}M. Berti and L. Biasco, \textit{Branching of Cantor manifolds of elliptic tori and applications to PDEs},  Comm. Math. Phys., {\bf 305} (2011), no. 3, 741-796.

    \bibitem{B96} J. Bourgain, \textit{Construction of approximative and almost periodic solutions of perturbed linear Schr$\ddot{\mbox{o}}$dinger and wave equations}, Geom. Funct. Anal. \textbf{6} (1996), no. 2, 201-230.

\bibitem{Bour2} J. Bourgain, \textit{On Melnikov's persistence problem}, Math. Res. Lett. {\textbf {4}} (1997), no. 4, 445-458.

\bibitem{B3} J. Bourgain, \textit{Quasi-periodic solutions of Hamiltonian perturbations of 2D linear
Schr$\ddot{\mbox{o}}$dinger equation}, Ann. of Math. \textbf{148} (1998), no. 2, 363-439.

\bibitem{B6} J. Bourgain, \textit{On diffusion in high-dimensional Hamiltonian systems and PDE}, J. Anal. Math. \textbf{80} (2000), 1-35.

\bibitem{Bour2004} J. Bourgain, \textit{ Remark on stability and diffusion in high-dimensional Hamiltonian systems and partial differential equations}, Ergodic Theory Dynam. Systems \textbf{24} (2004), no. 5, 1331-1357.

\bibitem{Bour5} J. Bourgain,  \textit{Recent progress on quasi-periodic lattice Schr$\ddot{\mbox{o}}$dinger operators and Hamiltonian PDEs} Russian Math. Surveys \textbf{59}:2 (2004), 231-246.


\bibitem{Bour6} J. Bourgain, \textit{Green function estimates for lattice Schr\"odinger operators and applications}, Annals of Mathematics Studies, Princeton University Press (2005).

    \bibitem{Bour2005JFA} J. Bourgain, \textit{On invariant tori of full dimension for 1D periodic NLS}, J. Funct. Anal., \textbf{229} (2005), no. 1, 62-94.

        \bibitem{CLY} H. Cong, J. Liu and X. Yuan, \textit{Stability of KAM tori for nonlinear Schr$\ddot{\mbox{o}}$dinger equation}, Mem. Amer. Math. Soc, \textbf{239} (2016), no. 1134.

\bibitem{CGL} H. Cong, M. Gao and J. Liu, \textit{Long time stability of KAM tori for nonlinear wave
equation}, J. Differential Equations, \textbf{258} (2015), no. 8, 2823-2846.
\bibitem{C-W} W. Craig. and  C. Wayne. \textit{Newton's method and periodic solutions of nonlinear wave equation}, Comm. Pure. Appl. Math. \textbf{46} (1993), no. 11, 1409-1498.
\bibitem{Delort} J. M. Delort, \textit{Long-time Sobolev stability for small solutions of quasi-linear Klein-Gordon equations on the circle}, Trans. Amer. Math. Soc. \textbf{361} (2009), no. 8, 4299-4365.
\bibitem{DS} J. M. Delort and J. Szeftel, \textit{Long-time existence for small data nonlinear Klein-Gordon equations on tori and spheres}, Int. Math. Res. Not. \textbf{37} (2004), no. 37, 1897-1966.
\bibitem{E-K} L. H. Eliasson and S. B. Kuksin \textit{KAM for the nonlinear Schr${\ddot{\mbox {o}}}$dinger equation}, Ann. of Math. \textbf{172} (2010), no. 1, 371-435.
\bibitem{E1988} L. H. Eliasson, \textit{Perturbations of stable invariant tori for Hamiltonian systems}, Ann. Scuola Norm. Sup. Pisa Cl. Sci. \textbf{15} (1988), no. 1, 115-147.

    \bibitem{FG}E. Faou and B. Gr$\acute{\mbox{e}}$bert, \textit{A Nekhoroshev-type theorem for the nonlinear Schr${\ddot{\mbox {o}}}$dinger equation on the torus} Anal. PDE. \textbf{6} (2013), no. 6, 1243--1262.

        \bibitem{Feola} R. Feola and M. Procesi, \textit{Quasi-periodic solutions for fully nonlinear forced reversible Schrodinger equations}, J. Differential Equations. 259 (2015), no. 7, 3389-3447.

\bibitem{GIP}B. Gr$\acute{\mbox{e}}$bert, R. Imekraz and $\acute{\mbox{E}}$. Paturel, \textit{Normal forms for semilinear quantum harmonic oscillators} Comm. Math. Phys. \textbf{291} (2009), no. 3, 763-798.






















\bibitem{K} A. N. Kolmogorov, \textit{On the conservation of conditionally periodic motions under small perturbations of the hamiltonian}, Dokl. Akad. Nauk, SSSR, \textbf{98} (1954), 527-530.
\bibitem{K3} S. B. Kuksin, \textit{Hamiltonian perturbations of infinite-dimensional linear systems with an imaginary spectrum}, Funct. Anal. Appl. \textbf{21} (1987), 192-205.

\bibitem{K1} S. B. Kuksin, \textit{Nearly integrable infinite-dimensional Hamiltonian systems}, Springer-Verlag, Berlin 1993.



\bibitem{KP1} S. B. Kuksin and J. P$\ddot{\mbox{o}}$schel, \textit{Invariant Cantor manifolds of quasi-periodic oscillations for a nonlinear Schr$\ddot{\mbox{o}}$dinger equation}, Ann. of Math. \textbf{143} (1996), no. 1, 149-179.

\bibitem{K1} S. B. Kuksin, \textit{Analysis of Hamiltonian PDEs}, Oxford Univ. Press, Oxford, 2000

    \bibitem{Kuk3} S. B. Kuksin, \textit{Fifteen Years of KAM for PDE} { geometry, topology, and mathematical physics}, Amer. Math. Soc. Transl. \textbf{2}: 212 (2004), 237-258.








\bibitem{LY1}  J. Liu and  X. Yuan, \textit{A KAM Theorem for Hamiltonian Partial Differential Equations with Unbounded Perturbations}, { Commun.  Math. Phys.}, {\bf 307} (2011), no. 3, 629-673.

\bibitem{Morbid-G} A. Morbidelli  and A. Giorgilli, \textit{Superexponential Stability of KAM Tori}, J. Statist. Phys. \textbf{78} (1995), no. 5-6, 1607-1617.
\bibitem{M} J. Moser, \textit{On invariant curves of area-preserving mappings of an annulus }, Nachr. Akad. Wiss. G\"{o}ttingen, Math. Phys. Kl, (1962), 1-20.
\bibitem{Nekhoroshev} N. Nekhoroshev, \textit{An exponential estimate of the time of stability of
nearly-integrable Hamiltonian systems}, (Russian) Uspehi Mat. Nauk \textbf{32} (1977), no. 6, 5¨C66.

\bibitem{KP} T. Kappeler and J. P\"{o}schel, \textit{KdV\&KAM},
Springer-Verlag, Berlin Heidelberg, 2003


\bibitem{P1990} J. P\"{o}schel, \textit{Small divisors with spatial structure in infinite dimensional Hamiltonian systems}, Comm. Math. Phys. \textbf{127} (1990), no. 2, 351-393.

\bibitem{Poschel-Nekh} J. P\"oschel, \textit{On Nekhoroshev's Estimate at an Elliptic Equilibrium}, Internat. Math. Res. Notices \textbf{4} (1999), no. 4, 203¨C215.

\bibitem{P2002} J. P\"{o}schel, \textit{On the construction of almost periodic solutions for nonlinear Schr\"{o}dinger equations}, Ergodic Theory Dynam. Systems, \textbf{5} (2002), no. 5, 1537-1549.

\bibitem{W} C. E. Wayne, \textit{Periodic and quasi-periodic solutions of nonlinear wave equations via KAM theory}, Comm. Math. Phys. \textbf{127} (1990), no. 3, 479-528.
\bibitem{YZ}X. Yuan and J. Zhang, \textit{Long time stability of Hamiltonian partial differential equations}, SIAM J. Math. Anal. \textbf{46} (2014), no. 5, 3176-3222.

    \bibitem{Zhang} J. Zhang, M. Gao and X. Yuan, \textit{KAM tori for reversible partial differential equations}, Nonlinearity \textbf{24} (2011), no. 4, 1198-1228.
\end{thebibliography}

\end{document}